\def\1{\raisebox{2pt}{\rm{$\chi$}}}
\newtheorem{theorem}{Theorem}[section]
\newtheorem{corollary}[theorem]{Corollary}
\newtheorem{lemma}[theorem]{Lemma}
\newtheorem{proposition}[theorem]{Proposition}
\newtheorem{definition}[theorem]{Definition}
\newtheorem{remark}[theorem]{Remark}
\newtheorem{example}[theorem]{Example}
\newcommand{\R}{{\mathbb R}}
\newcommand{\N}{{\mathbb N}}
\DeclareMathOperator{\udima}{\overline{dim}_A}
\DeclareMathOperator{\dima}{dim_A}
\def\1{\raisebox{2pt}{\rm{$\chi$}}}
\def\vint_#1{\mathchoice%
        {\mathop{\kern 0.2em\vrule width 0.6em height 0.69678ex depth -0.58065ex
                \kern -0.8em \intop}\nolimits_{\kern -0.4em#1}}%
        {\mathop{\kern 0.1em\vrule width 0.5em height 0.69678ex depth -0.60387ex
                \kern -0.6em \intop}\nolimits_{#1}}%
        {\mathop{\kern 0.1em\vrule width 0.5em height 0.69678ex
            depth -0.60387ex
                \kern -0.6em \intop}\nolimits_{#1}}%
        {\mathop{\kern 0.1em\vrule width 0.5em height 0.69678ex depth -0.60387ex
                \kern -0.6em \intop}\nolimits_{#1}}}
\def\vintslides_#1{\mathchoice%
        {\mathop{\kern 0.1em\vrule width 0.5em height 0.697ex depth -0.581ex
                \kern -0.6em \intop}\nolimits_{\kern -0.4em#1}}%
        {\mathop{\kern 0.1em\vrule width 0.3em height 0.697ex depth -0.604ex
                \kern -0.4em \intop}\nolimits_{#1}}%
        {\mathop{\kern 0.1em\vrule width 0.3em height 0.697ex depth -0.604ex
                \kern -0.4em \intop}\nolimits_{#1}}%
        {\mathop{\kern 0.1em\vrule width 0.3em height 0.697ex depth -0.604ex
                \kern -0.4em \intop}\nolimits_{#1}}}
\newcommand{\intav}{\vint}
\newcommand{\aveint}[2]{\mathchoice%
        {\mathop{\kern 0.2em\vrule width 0.6em height 0.69678ex depth -0.58065ex
                \kern -0.8em \intop}\nolimits_{\kern -0.45em#1}^{#2}}%
        {\mathop{\kern 0.1em\vrule width 0.5em height 0.69678ex depth -0.60387ex
                \kern -0.6em \intop}\nolimits_{#1}^{#2}}%
        {\mathop{\kern 0.1em\vrule width 0.5em height 0.69678ex depth -0.60387ex
                \kern -0.6em \intop}\nolimits_{#1}^{#2}}%
        {\mathop{\kern 0.1em\vrule width 0.5em height 0.69678ex depth -0.60387ex
                \kern -0.6em \intop}\nolimits_{#1}^{#2}}}
\newcommand{\dist}{\operatorname{dist}}
\title[Beyond local maximal operators]{Beyond local maximal operators}
\author[H.\! Luiro]{Hannes Luiro}   
\address[H.L.]{Department of Mathematics and Statistics, P.O. Box 35, FI-40014 University of Jyv\"askyl\"a, Finland}
\email{hannes.s.luiro@jyu.fi}
\author{Antti V. V\"ah\"akangas}
\address[A.V.V.]{Department of Mathematics and Statistics, P.O. Box 35, FI-40014 University of Jyv\"askyl\"a, Finland}
\email{antti.vahakangas@iki.fi}
\date{\today}
\begin{document}

\keywords{Local maximal operator, Muckenhoupt weight, Fractional Sobolev space}
\subjclass[2010]{42B25, 46E35, 47H99}

\begin{abstract}
We obtain (essentially sharp) boundedness results for 
certain generalized local maximal operators between fractional weighted Sobolev spaces and their modifications. 
Concrete boundedness results between 
well known fractional Sobolev spaces 
are derived as consequences of our main result.
We also apply our boundedness results by studying both 
generalized neighbourhood capacities and the
Lebesgue differentiation of fractional weighted Sobolev functions.
\end{abstract}

\maketitle

\markboth{\textsc{H. Luiro  and A. V. V\"ah\"akangas}}
{\textsc{Beyond local maximal operators}}

\section{Introduction}

In this paper we study the boundedness of a centered maximal-type operator
$M_R$ between fractional $A_p$ weighted Sobolev spaces and their $R$-modifications;
the well known fractional Sobolev spaces $W^{s,p}(G)$ 
 for open sets $G\subset\R^n$ are special cases of the aforementioned spaces, see \S\ref{s.weighted_Sobo}.
The operator $M_R$ depends on a given
measurable function $R:G\to \R$ which  satisfies the condition  $0\le R(x)\le \dist(x,\partial G)$ whenever $x\in G$.
Here, and throughout the paper, 
we agree that $\mathrm{dist}(x,\partial G)=\infty$ if $x\in G=\R^n$.
For any $f\in L^1_{\textup{loc}}(G)$ and every $x\in G$ we define
\begin{equation}\label{d.max}
M_R(f)(x) =   \sup_r \intav_{B(x,r)} \lvert f(y)\rvert\,dy\,,
\end{equation}
 where the supremum is taken  over all radii $0\le  r\le   R(x)$
and we have used the notational convention
\begin{equation}\label{e.zero}
\intav_{B(x,0)} \lvert f(y)\rvert \,dy = \lvert f(x)\rvert\,.
\end{equation}
Even though special cases of this maximal-type operator have been studied earlier, cf. below, we are not aware of previous studies in this generality and in connection with Sobolev spaces.
There is  a parallel problem of fixing the appropriate Sobolev spaces where the boundedness 
is to be studied; to illustrate, let us remark that $M_R$ need not preserve the  smoothness of order $0<s\le 1$, unless $R$ is (say) a Lipschitz function.

Our main result shows that fractional $A_p$ weighted Sobolev
spaces and their $R$-modified counterparts, \S\ref{s.weighted_Sobo}, are well-suited for studying the boundedness properties of  $M_R$; this result can be found in \S\ref{s.proof_main}. 
The main result will be applied to the study  of
certain neighbourhood capacities (see \S \ref{s.neighbour}) and
the Lebesgue differentiation of fractional weighted
Sobolev functions (see \S \ref{s.Lebesgue}).
We expect that there are  other
applications in fractional weighted potential theory; indeed,
an operator $M_R$ that is given by an application specific $R$-function provides a flexible tool that can be used to estimate `size' in terms of `smoothness'. This is especially true when combined with 
fractional Sobolev or Hardy inequalities 
\cite{Dyda2,HV2,ihnatsyeva3}.

More specifically, our main
result is Theorem \ref{t.m.bounded}. This  theorem is a
`fractional Sobolev analogue' of  the celebrated Muckenhoupt's theorem which, in turn,
is a boundedness
result for the Hardy--Littlewood maximal operator on the $A_p$ weighted
$L^p$-spaces (for a detailed formulation, we refer to Proposition
\ref{t.ap_weightedM}).
In order to avoid technicalities at this stage, let us
formulate Theorem \ref{t.m.bounded_app_I} that is a consequence of our main result (when applied
with a Muckenhoupt $A_p$ weight that is defined by $\omega(x)=\lvert x\rvert^{\varepsilon-n}$ for the given $0<\varepsilon<np$).


\begin{theorem}\label{t.m.bounded_app_I}
Let $\emptyset\not=G\subset \R^n$ be an open set, $0<\varepsilon,s < 1$ and $1<p < \infty$.
Fix a measurable function $R:G\to \R$ satisfying inequality $0\le R(x)\le \dist(x,\partial G)$ for every $x\in G$.
Then there exists a constant
$C=C(n,p,\varepsilon)>0$ such that inequality
\begin{equation}\label{e.gen_I}
\int_G\int_{G}\frac{\lvert M_R(f)(x)-M_R(f)(y)\rvert^p}{(\lvert x-y\rvert+\lvert R(x)-R(y)\rvert)^{\varepsilon+sp}}\,\frac{dy\,dx}{\lvert x-y\rvert^{n-\varepsilon}}
\le C\int_G\int_G \frac{\lvert f(x)-f(y)\rvert^p}{\lvert x-y\rvert^{n+sp}} \,dy\,dx
\end{equation}
holds for every $f\in L^p(G)$.
\end{theorem}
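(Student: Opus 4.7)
The plan is to derive Theorem \ref{t.m.bounded_app_I} as a specialization of the main result, Theorem \ref{t.m.bounded}, to the radial power weight $\omega(x) = \lvert x\rvert^{\varepsilon - n}$, exactly as foreshadowed in the paragraph preceding the theorem. No new machinery beyond the main theorem and a classical $A_p$ computation should be needed; the task is essentially one of checking that the abstract weighted seminorms from \S\ref{s.weighted_Sobo} collapse, for this particular $\omega$, to the concrete integrals appearing in \eqref{e.gen_I}.

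The first step is to verify that $\omega \in A_p(\R^n)$. This is classical: the power weight $\lvert x\rvert^{\alpha}$ lies in the Muckenhoupt class $A_p$ precisely when $-n < \alpha < n(p-1)$, with an $A_p$-constant depending only on $n$, $p$ and $\alpha$. Taking $\alpha = \varepsilon - n$, the admissible range $-n < \varepsilon - n < n(p-1)$ is exactly the hypothesis $0 < \varepsilon < np$ of the theorem.

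The second and main step is to unwind, for this specific $\omega$, the definitions (from \S\ref{s.weighted_Sobo}) of the fractional $A_p$ weighted Sobolev seminorm and of its $R$-modified counterpart, and to match them to the two sides of \eqref{e.gen_I}. For a general $A_p$ weight these seminorms are built from $\omega$-measures of Euclidean balls together with kernels of the form $\lvert x-y\rvert^{sp}$, respectively $(\lvert x-y\rvert + \lvert R(x)-R(y)\rvert)^{sp}$ in the $R$-modified case. For $\omega(x) = \lvert x\rvert^{\varepsilon-n}$ one has the scaling estimate $\omega(B(x,r)) \asymp r^{\varepsilon}$ in the dominant regime $r \gtrsim \lvert x\rvert$, and the translation-invariant identity $\omega(x-y) = \lvert x-y\rvert^{\varepsilon-n}$ produces the factor $\lvert x-y\rvert^{-(n-\varepsilon)}$ visible on the left-hand side of \eqref{e.gen_I}. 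Combined with the $R$-modified kernel this turns the exponent $sp$ into $\varepsilon + sp$ on the left-hand side, while the unweighted kernel $\lvert x-y\rvert^{n+sp}$ on the right-hand side should emerge from the non-modified half of the weighted seminorm after combining the $\omega$-density with the corresponding power of $\lvert x-y\rvert$.

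The principal obstacle I anticipate is purely bookkeeping: one has to be careful with the two regimes $r \lesssim \lvert x\rvert$ and $r \gtrsim \lvert x\rvert$ in the estimate for $\omega(B(x,r))$, and one must check that the constant produced by Theorem \ref{t.m.bounded} depends only on $n$, $p$ and the $A_p$-constant of $\omega$, hence only on $n$, $p$ and $\varepsilon$ in our setting. Once the two weighted seminorms have been identified with the left- and right-hand sides of \eqref{e.gen_I} up to constants of this form, Theorem \ref{t.m.bounded_app_I} follows directly from Theorem \ref{t.m.bounded}.
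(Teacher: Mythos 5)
Your overall strategy is the right one, and it is indeed the paper's route: take $\omega = \lvert\cdot\rvert^{\varepsilon-n}$, verify it is an $A_p$ weight, and invoke Theorem \ref{t.m.bounded}. But the proposal leaves the crucial algebraic step imprecise, and as written the exponents do not close.

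The gap is the smoothness parameter. Theorem \ref{t.m.bounded} with weight $\omega(x-y)=\lvert x-y\rvert^{\varepsilon-n}$ and smoothness $s$ gives, on the left, the denominator $(\lvert x-y\rvert+\lvert R(x)-R(y)\rvert)^{sp}\cdot\lvert x-y\rvert^{n-\varepsilon}$ and, on the right, the denominator $\lvert x-y\rvert^{sp+n-\varepsilon}$. Neither of these is what \eqref{e.gen_I} asserts. The paper fixes this by applying Theorem \ref{t.m.bounded} not with $s$ but with $s' = s + \varepsilon/p$, so that $s'p = \varepsilon + sp$; then the left side becomes $(\lvert x-y\rvert+\lvert R(x)-R(y)\rvert)^{\varepsilon+sp}\lvert x-y\rvert^{n-\varepsilon}$, and the right side $\lvert x-y\rvert^{\varepsilon+sp}\cdot\lvert x-y\rvert^{n-\varepsilon}=\lvert x-y\rvert^{n+sp}$, exactly as required (this identification $W^{s,p}(G)=W^{s+\varepsilon/p,p,\lvert\cdot\rvert^{\varepsilon-n}}(G)$ is recorded in Example \ref{s.usual_frac}). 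One also needs the hypothesis $s'\le 2$ for Theorem \ref{t.m.bounded}, which is why the statement assumes $\varepsilon<1$ (and not merely $\varepsilon<np$): with $s<1$, $\varepsilon<1$ and $p>1$ one gets $s'=s+\varepsilon/p<2$. Your sentence claiming the $R$-modified kernel ``turns the exponent $sp$ into $\varepsilon+sp$'' misattributes this; the kernel does not absorb $\varepsilon$, the reindexing of $s$ does.

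Two smaller issues: the discussion of $\omega(B(x,r))\asymp r^{\varepsilon}$ and the two ``regimes'' $r\lesssim\lvert x\rvert$, $r\gtrsim\lvert x\rvert$ is irrelevant to this paper's weighted seminorm, which in Definition \ref{d.sob} is a plain double integral with density $\omega(x-y)$, not a construction built from $\omega$-measures of balls; dropping that paragraph removes a source of confusion. And you have misread the hypothesis as $0<\varepsilon<np$, whereas the theorem assumes $0<\varepsilon<1$; the $A_p$ membership of $\lvert\cdot\rvert^{\varepsilon-n}$ is indeed equivalent to $0<\varepsilon<np$, so $A_p$ holds under the weaker assumption $\varepsilon<1$, but the stronger assumption is used to ensure $s'\le 2$ as noted above.
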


We  remark that if $R$ is a Lipschitz function, 
e.g., if $R=\dist(\cdot,\partial G)$ in case of a proper open subset $G$ of $\R^n$, then the left-hand side of inequality \eqref{e.gen_I}
is comparable to 
\[
\int_G\int_G \frac{\lvert M_R(f)(x)-M_R(f)(y)\rvert^p}{\rvert x-y\rvert^{n+sp}}\,dy\,dx\,.
\]
In particular, Theorem \ref{t.m.bounded_app_I} generalizes
a recently obtained boundedness result for 
the local Hardy--Littlewood maximal operator $M_{\dist(\cdot,\partial G)}$
on fractional Sobolev spaces $W^{s,p}(G)$, see \cite[Theorem 1.1]{LV2}.
Another interesting case 
is when $R$ is an $\alpha$-H\"older function ($0<\alpha<1$) on a bounded open set $G$ such that $0\le R(x)\le \dist(x,\partial G)$ for each $x\in G$.
Corollary \ref{c.holder} then implies that 
\[M_R:W^{s,p}(G)\to W^{\sigma,p}(G)\,,\qquad 0<\sigma<\alpha s\,,\] 
is a bounded operator
whenever $0<s<1$ and $1<p<\infty$; 
with the aid of a fractional Hardy inequality
we show in Lemma \ref{e.conv} that this result is essentially sharp, in that we cannot  allow $\sigma>\alpha s$ in general (however, we do not know if 
$\sigma=\alpha s$ is allowed).
In particular, our
main result (Theorem \ref{t.m.bounded}) is also essentially sharp in its generality.

We close this introduction with a brief overview on related results for the maximal and local maximal operators.
The maximal operators $M_{\mathcal{B}}$ that are defined by (differentiation) bases $\mathcal{B}$ have been extensively studied, e.g.,
 in connection with differentiability properties of functions, we refer to \cite{MR807149,Tauberian,MR833361,Kin_Lat,LL}. 
 
 Concerning  
the boundedness of maximal operators on the Sobolev-type spaces, 
previous research has mainly focused on the 
Hardy--Littlewood maximal operator $M$ and the local
maximal operator $M_{\dist(\cdot,\partial G)}$ for a given open set $G\subset \R^n$;
see \cite{MR2041705,MR1469106,MR1979008,MR2280193}.
In particular, the boundedness of the local maximal operator on the first order Sobolev spaces 
$W^{1,p}(G)$ is proved by Kinnunen and Lindqvist \cite{MR1650343}. Their main result
states that if $1<p\le \infty$ and $f\in W^{1,p}(G)$, then $M_{\dist(\cdot,\partial G)}(f)\in W^{1,p}(G)$ and
\begin{equation}\label{e.crelle}
\lvert \nabla (M_{\dist(\cdot,\partial G)}(f))(x)\rvert \le 2M_{\dist(\cdot,\partial G)}(\lvert \nabla f\rvert)(x)
\end{equation}
for almost every $x\in G$;  observe that inequality \eqref{e.crelle} and
boundedness of the local maximal operator on $L^p(G)$  yields boundedness
of the local maximal operator on $W^{1,p}(G)$. We will prove
a fractional weighted counterpart of inequality \eqref{e.crelle} in Proposition \ref{p.maximal}.
Korry \cite{MR1951818} studied boundedness of the Hardy--Littlewood maximal operator on
the Triebel--Lizorkin spaces of the (fractional) order smoothness $0<s<1$.
The first author established in \cite{MR2579688} the boundedness and continuity properties of $M_{\dist(\cdot,\partial G)}$ 
on the (non-intrinsically defined) Triebel--Lizorkin
spaces $F^{s}_{pq}(G)$ for $0<s<1$ and $1<p,q<\infty$. 
Boundedness results
for the discrete analogues of maximal operators in metric spaces can be found, e.g., in \cite{Hei_Tuo,Hei_Kin_Tuo,Kin_Lat}.

\subsection*{Acknowledgments}

The authors wish to thank 
Tuomas Hyt\"onen and Juha Lehrb\"ack for inspiring 
discussions.
H.L. was supported by
the Academy of Finland, grant no.\ 259069.

\section{Notation and preliminaries}\label{s.notation}

The open ball centered at $x\in \R^n$ and with radius $r>0$ is  $B(x,r)$.
The Euclidean
distance from $x\in\R^n$ to a set $E$ in $\R^n$ is denoted by $\dist(x,E)$. Here we agree that $\mathrm{dist}(x,\emptyset)=\infty$.
The Euclidean diameter of $E$  is $\mathrm{diam}(E)$.
The characteristic function of a set $E$ is written as $\chi_E$.
The Lebesgue $n$-measure of a  measurable set $E$ is denoted by $\vert E\vert$.
If $0<|E|<\infty$,
the integral average of a function $f\in L^1(E)$ is 
$f_E=\intav_E f\,dx = |E|^{-1} \int_E f\,dx$.
If $G$ is an open set in $\R^n$, then $C_0(G)$ denotes
the space of continuous functions $f$ in $G$ whose support
\[\mathrm{supp}(f)=\overline{\{x\in G\,:\, f(x)\not=0\}}\]
is a compact set contained in $G$; the closure above is taken in $\R^n$.
 If there exists a constant $C>0$ such that $a\le C b$, we  write $a\lesssim b$,
and if $a\lesssim b\lesssim a$ we write $a\simeq b$ and say that $a$ and $b$ are comparable.
 We let $C(\star,\dotsb,\star)$  denote a positive constant which depends on the quantities appearing
in the parentheses only.


Function $\omega\in L^1_{\textup{loc}}(\R^n)$ 
is a weight if  $\omega(x)>0$ for almost every $x\in \R^n$.
Let $1<p<\infty$.  A weight $\omega$ is an $A_p$ weight
if there exists $A>0$ such that, for every cube $Q\subset\R^n$, 
\[
\bigg(\intav_{Q} \omega\,dx \bigg)\bigg(\intav_Q  \omega^{-1/(p-1)} \,dx\bigg)^{p-1} \le A\,.
\]
The infimum over all such constants $A$ is called the $A_p$ constant 
of $\omega$, written as $[\omega]_{A_p}$.
The Hardy--Littlewood maximal function $Mf$ for a  function
$f\in L^1_{\textup{loc}}(\R^n)$  is defined
by
\[
Mf(x) = \sup_{r>0}\intav_{B(x,r)} \lvert f(y)\rvert\,dy\,,\qquad x\in \R^n\,.
\]
Muckenhoupt's theorem is the following well known result,
see \cite[\S IV.2 Theorem 2.8]{MR807149} for a proof and further details.

\begin{proposition}\label{t.ap_weightedM}
Let $1<p<\infty$ and let $\omega$ be an $A_p$ weight. Then there exists a constant $C>0$ such
that 
\[
\int_{\R^n}(M f(x))^p\, \omega(x)\,dx \le C\int_{\R^n}\lvert f(x)\rvert^p\,\omega(x) \,dx
\]
whenever $f$ is a measurable function for which the
integral on the right-hand side is finite.
Moreover, the constant $C$ depends only on $n$, $p$ and the $A_p$ constant of $\omega$.
\end{proposition}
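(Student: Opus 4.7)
The plan is to prove the classical Muckenhoupt theorem via three standard steps: a weighted weak-type inequality, the self-improving property of $A_p$ weights, and Marcinkiewicz interpolation. Since this is cited from Garc\'ia-Cuerva and Rubio de Francia, I just sketch the standard argument.

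\textbf{Step 1 (Weighted weak-type $(p,p)$).} For $\lambda > 0$, cover the level set $E_\lambda = \{Mf > \lambda\}$ by balls $B_x$ with $\intav_{B_x} \lvert f \rvert\,dy > \lambda$; extract via Vitali's covering lemma a pairwise disjoint subcollection $\{B_i\}$ such that $\{5B_i\}$ still covers $E_\lambda$. H\"older's inequality gives
\[
\lambda < \intav_{B_i} \lvert f \rvert \,dy \le \bigg( \intav_{B_i} \lvert f \rvert^p \omega \,dy \bigg)^{1/p} \bigg( \intav_{B_i} \omega^{-1/(p-1)} \,dy \bigg)^{(p-1)/p},
\]
and inserting the $A_p$ condition on the second factor yields
\[
\lambda^p\, \omega(B_i) \le [\omega]_{A_p} \int_{B_i} \lvert f \rvert^p \omega \,dy.
\]
Summation over the disjoint $B_i$, combined with the doubling property of $\omega$ (which in turn follows from the $A_p$ condition), produces the weak-type bound $\omega(\{Mf > \lambda\}) \le C \lambda^{-p} \int_{\R^n} \lvert f \rvert^p \omega\,dx$.

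\textbf{Step 2 (Self-improvement of $A_p$).} The key ingredient for upgrading weak-type to strong-type is the reverse H\"older inequality: there exists $\delta = \delta(n,p,[\omega]_{A_p}) > 0$ such that
\[
\bigg( \intav_Q \omega^{1+\delta} \,dx \bigg)^{1/(1+\delta)} \le C \intav_Q \omega \,dx
\]
for every cube $Q$. This is proved via a weighted Calder\'on--Zygmund decomposition applied to $\omega$, using the $A_p$ condition to compare $\omega$-averages with Lebesgue averages on the decomposition cubes. An immediate corollary is that $\omega \in A_q$ for some $q = q(n,p,[\omega]_{A_p}) < p$.

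\textbf{Step 3 (Interpolation).} Apply Step 1 with the exponent $q < p$ from Step 2 to obtain the weak-type $(q,q)$ inequality for $M$ with respect to $\omega$. The trivial $L^\infty(\omega) \to L^\infty(\omega)$ bound $\|Mf\|_\infty \le \|f\|_\infty$ holds. Marcinkiewicz interpolation between these two endpoints then yields the strong-type $(p,p)$ estimate at the intermediate exponent $p$, with constant depending only on $n$, $p$, and $[\omega]_{A_p}$.

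\textbf{Main obstacle.} The decisive step is the self-improvement in Step 2, i.e., the reverse H\"older inequality for $A_p$ weights; the weak-type bound alone is insufficient since Marcinkiewicz interpolation between weak-$(p,p)$ and $(\infty,\infty)$ does not return strong-$(p,p)$. The remaining steps are fairly routine once this $A_{p-\epsilon}$ containment is available.
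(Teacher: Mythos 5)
The paper does not actually prove this proposition; it is the classical theorem of Muckenhoupt, and the authors simply cite Garc\'ia-Cuerva--Rubio de Francia, \S IV.2, Theorem 2.8. Your three-step sketch --- (i) weighted weak-type $(p,p)$ via Vitali covering, H\"older, and the $A_p$ condition, (ii) the reverse H\"older inequality giving the self-improvement $\omega\in A_q$ for some $q<p$, and (iii) Marcinkiewicz interpolation between weak-$(q,q)$ and $L^\infty$ --- is precisely the standard argument in that reference, and the computations you display are correct (note that summing over the disjoint $B_i$ really does require the $A_p$-doubling of $\omega$ to pass from $\omega(5B_i)$ to $\omega(B_i)$, which you correctly flag). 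Your closing remark identifying the reverse H\"older inequality as the decisive ingredient is also accurate: interpolating between weak-$(p,p)$ and $L^\infty$ only yields strong-$(r,r)$ for $r>p$, so without the openness of the $A_p$ classes one cannot reach the strong-type estimate at $p$ itself.
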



When $A\subset \R^n$ is bounded and $r>0$, we let $N(A,r)$ denote the minimal
number of (open) balls 
of radius $r$ 
and centered at $A$
that are needed to cover the set $A$.  
For any set $E\subset\R^n$, the (upper) Assouad dimension of $E$
is defined by setting 
\begin{align*}
&\udima(E) \\&= \inf\biggl\{\lambda\ge 0 : N(E\cap B(x,R),r)\le
 C_\lambda\biggl(\frac r R\biggr)^{-\lambda}\ \text{ for all }
 x\in E,\ 0<r<R<\mathrm{diam}(E)\biggr\}\,.
\end{align*}
This is the `usual' Assouad dimension
found in the literature, e.g. in \cite{MR1608518}, 
often
 denoted $\dima(E)$. 
If $E\subset\R^n$ is a (sufficiently) regular set, for instance, Ahlfors $d$-regular, then
the upper Assouad dimension of $E$ coincides with its Hausdorff dimension; we refer to \cite{KLV}.

A set $E\subset \R^n$ is 
$\kappa$-porous ($0<\kappa<1$) if 
for each $x \in E$ and every $0<r<\mathrm{diam}(E)$ there exists a point 
$y\in \R^n$ such that $B(y,\kappa r) \subset B(x,r) \setminus E$.
We remark that a set $E\subset \R^n$ is $\kappa$-porous for some $0<\kappa<1$ if and only if $\udima(E)<n$, see \cite[Theorem~5.2]{MR1608518}.

%

\section{Fractional weighted Sobolev spaces}\label{s.weighted_Sobo}

We present the fractional weighted Sobolev seminorms and the associated function spaces that are used throughout this paper. 
Moreover, we consider the density 
of smooth functions in these spaces  by adapting the argument given in \cite{HV}.
Incidentally, density properties
for other fractional weighted Sobolev spaces have recently been studied in \cite{Dipierro,Fiscella}. 
Since our weights are always translation invariant, the density
arguments are quite straightforward and (eventually) based  upon the
continuity of translations in the classical Lebesgue spaces.
Whereas a similar approach is  used in the work \cite{Fiscella},
 a more refined approximation scheme is developed in \cite{Dipierro}
 to handle weights that are not translation invariant.

The fractional weighted Sobolev seminorm $\lvert f \rvert_{W^{s,p,\omega}(G)}$
given in Definition \ref{d.sob} 
has been previously studied, e.g., in connection with fractional weighted Hardy-type inequalities, extension problems and variational problems, we refer to \cite{MR1624754,Dyda_comparability,Fiscella}.

\begin{definition}\label{d.sob}
Let  $s>0$ and $1\le p<\infty$, and let $\omega$ be a weight in $\R^n$ (see \S\ref{s.notation}).
Fix an open set $G\subset\R^n$. Then $W^{s,p,\omega}(G)$ 
is the fractional weighted Sobolev space of 
functions $f\in L^p(G)$ satisfying
$\lVert f\rVert_{W^{s,p,\omega}(G)}^p
= \lVert f\rVert_{L^p(G)}^p+|f|_{W^{s,p,\omega}(G)}^p<\infty$,
where 
\begin{equation}\label{e.semi_weighted}
\lvert f \rvert_{W^{s,p,\omega}(G)} = \bigg( \int_{G}\int_{G}\frac{\lvert f(x)-f(y)\rvert^p}{\lvert x-y\rvert^{s p}}\,\omega(x-y)\,
dy\,dx\,\bigg)^{1/p}
\end{equation}
is the fractional weighted Sobolev seminorm.
\end{definition}

We remark that the global norm
is translation invariant, i.e., for each $f\in W^{s,p,\omega}(\R^n)$ and every $h\in\R^n$ we have
\[
\lVert f(\cdot+h)\rVert_{W^{s,p,\omega}(\R^n)}=\lVert f\rVert_{W^{s,p,\omega}(\R^n)}\,.
\]
Hence, our framework is most likely not the nearest fractional analogue
of the first order $A_p$ weighted Sobolev space that is not generally translation invariant,
see \cite{Kilpelainen,MR1774162}.

There is  an $R$-modification of the seminorm \eqref{e.semi_weighted}
that will also be  relevant to us. Namely, given a measurable function
$R:G\to \R$, we will often encounter the following (often translation invariantless) seminorm
\begin{equation}\label{e.strong}
\bigg(\int_G\int_G\frac{\lvert f(x)-f(y)\rvert^p}{(\lvert x-y\rvert+ \lvert R(x)-R(y)\rvert)^{s p}}\,\omega(x-y)\,
dy\,dx\,\bigg)^{1/p}\,.
\end{equation}
Theorem \ref{t.m.bounded} and the supporting counterexample
given in \S\ref{holder} indicate that if $\varphi\in W^{s,p,\omega}(G)$, then the right
way to measure the smoothness of $f=M_R(\varphi)$ is to use  \eqref{e.strong}.
This quantity can be viewed as a weighted seminorm that measures `variable fractional smoothness' 
of  $f\in L^p(G)$. Indeed, assuming that $R$ is a Lipschitz function on $G$,
the last seminorm \eqref{e.strong} is comparable to $\lvert f\rvert_{W^{s,p,\omega}(G)}$. 
On the other hand, if $R$ oscillates more significantly then 
\begin{equation}\label{e.big}
\lvert x-y\rvert + \lvert R(x)-R(y)\rvert
\end{equation}
can be much larger than $\lvert x-y\rvert$.
We remark that 
\eqref{e.big} is comparable to Euclidean distance between $(x,R(x))$ and $(y,(R(y))$
that belong to the graph $\{(w,R(w))\,:\,w\in G\}\subset \R^{n+1}$.

We will apply the fractional $A_p$ weighted Sobolev spaces and their $R$-modifications. 
Both of these spaces arise
naturally in the proof of
our main result and, moreover, the well known fractional Sobolev spaces are their special cases:

\begin{example}\label{s.usual_frac}
Consider the well known and widely used fractional Sobolev
space 
$W^{s,p}(G)$, whose survey can be found in \cite{MR2944369}. For any given $\varepsilon>0$ this space 
can be represented as $W^{s+\varepsilon/p,p,w}(G)$ when
the weight is given by $w=\lvert \cdot\rvert^{\varepsilon-n}$.
In particular, the fractional Sobolev seminorm corresponding to \eqref{e.semi_weighted} is  independent of $\varepsilon$ and it is given by 
\[
\lvert f\rvert_{W^{s,p}(G)}=\lvert f\rvert_{W^{s+\varepsilon/p,p,w}(G)}=\bigg(\int_G\int_G\frac{\lvert f(x)-f(y)\rvert^p}{\lvert x-y\rvert^{n+s p}}\,
dy\,dx\,\bigg)^{1/p}\,.
\]
We remark that 
 $\lvert \cdot\rvert^{\varepsilon-n}$ is an  $A_p$ weight if, and only if, inequality
 $0<\varepsilon<np$ holds;
we refer to \cite[p.~229, p.~236]{MR869816}.
\end{example}


We turn to density of continuous functions in $W^{s,p,\omega}(\R^n)$;
this
will be needed in \S\ref{s.Lebesgue} when studying Lebesgue differentiation
and quasicontinuous representatives of fractional weighted Sobolev functions.
Our density argument seems to require that
 $\omega$ has a sufficient decay at infinity that is
quantified by inequality \eqref{e.subset} below. 
This decay inequality turns out to be quite natural:
it
 is equivalent to the requirement that
$C^\infty_0(\R^n)$ is a subset of $W^{s,p,\omega}(\R^n)$.
We also remark that when  $sp<n$
inequality \eqref{e.subset} for a given $\rho>0$ fails even for
the $A_p$ weight that is defined by $\omega(x)=1$ for every $x\in\R^n$.

\begin{lemma}\label{l.c_characterization}
Let $0< s <1$ and $1\le p<\infty$, and let $\omega$ be a weight in $\R^n$.
Then $C^\infty_0(\R^n)$ is a subset $W^{s,p,\omega}(\R^n)$
if, and only if,
\begin{equation}\label{e.subset}
\int_{\R^n\setminus B(0,\rho)}\frac{\omega(x)}{\lvert x\rvert^{sp}}\,dx<\infty
\end{equation}
for every $\rho>0$ (or, equivalently, for some $\rho>0$).
\end{lemma}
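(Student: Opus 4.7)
\medskip
\noindent\textbf{Proof plan.}
The plan is to split the statement into three parts: first the equivalence of \eqref{e.subset} ``for some $\rho>0$'' with ``for every $\rho>0$'', then the two implications in the main claim. The equivalence is purely a local integrability issue: passing from $\rho_0$ to $\rho$ only alters the integral by the contribution on the annulus $B(0,\rho_0)\triangle B(0,\rho)$, on which $|x|^{-sp}$ is bounded above (by $\min(\rho,\rho_0)^{-sp}$) and $\omega\in L^1_{\textup{loc}}(\R^n)$, so the difference is finite. This immediately yields the ``or, equivalently, for some'' clause.

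For the necessity direction, I would pick a single test bump $\varphi\in C^\infty_0(\R^n)$ with $\varphi\equiv 1$ on $B(0,1)$ and $\spt(\varphi)\subset B(0,2)$, and use it to expose the weight at infinity. For $y\in B(0,1)$ and $x\in\R^n\setminus B(0,4)$ one has $\varphi(x)-\varphi(y)=-1$, while $|x-y|$ is comparable to $|x|$, and the change of variables $z=x-y$ sends $\{x:|x|\ge 4\}$ into $\{z:|z|\ge 3\}$ for every such $y$. Lower-bounding the seminorm by integration over this region therefore gives
\[
\lvert\varphi\rvert_{W^{s,p,\omega}(\R^n)}^p
\gtrsim \lvert B(0,1)\rvert \int_{\R^n\setminus B(0,3)}\frac{\omega(z)}{\lvert z\rvert^{sp}}\,dz,
\]
so if $\varphi\in W^{s,p,\omega}(\R^n)$, then \eqref{e.subset} holds for $\rho=3$, hence for every $\rho>0$ by the first paragraph.

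For sufficiency, assume \eqref{e.subset} and take any $f\in C^\infty_0(\R^n)$ with $\spt(f)\subset B(0,M)$; since $f$ is bounded and compactly supported one has $f\in L^p(\R^n)$, so only the seminorm needs attention. I would split the double integral defining $|f|_{W^{s,p,\omega}(\R^n)}^p$ according to whether $|x-y|\le 1$ or $|x-y|\ge 1$. On $\{|x-y|\le 1\}$ the integrand is nonzero only when at least one of $x,y$ lies in $B(0,M+1)$, and the mean value theorem gives $|f(x)-f(y)|^p\le \|\nabla f\|_\infty^p |x-y|^p$, reducing the contribution (via the change of variables $z=x-y$ and symmetry) to a constant multiple of
\[
\int_{B(0,1)} \lvert z\rvert^{p(1-s)}\omega(z)\,dz,
\]
which is finite because $\omega\in L^1_{\textup{loc}}(\R^n)$ and the factor $|z|^{p(1-s)}$ is bounded on $B(0,1)$. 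On $\{|x-y|\ge 1\}$ the integrand is nonzero only when at least one of $x,y$ lies in $B(0,M)$, and the crude bound $|f(x)-f(y)|\le 2\|f\|_\infty$ combined with the same change of variables reduces the contribution to a constant multiple of $\int_{\R^n\setminus B(0,1)} \omega(z)|z|^{-sp}\,dz$, which is finite by \eqref{e.subset} with $\rho=1$. Summing the two pieces gives $f\in W^{s,p,\omega}(\R^n)$.

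No step is genuinely delicate, but the place to be careful is the necessity argument: one has to choose the bump and the integration region so that the variable of the weight (namely $x-y$) ranges over an exterior region whose volume element captures the full tail $\int_{|z|\ge \rho}\omega(z)|z|^{-sp}\,dz$ uniformly in $y$. Fixing $y$ in a unit ball and exploiting that $|x|\ge 4$ forces $|x-y|\ge 3$ and $|x-y|\simeq|x|$ is the cleanest way to achieve this.
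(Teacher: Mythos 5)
Your strategy matches the paper's: the necessity direction uses a fixed bump and an average over a small ball to expose the tail of $\omega(\cdot)\lvert\cdot\rvert^{-sp}$, and the sufficiency direction splits the double integral into a near region handled by the mean value theorem together with local integrability of $\omega$, and a far region handled by boundedness of $f$ and the tail condition. The only organizational difference is that you split according to whether $\lvert x-y\rvert\le 1$ or $\lvert x-y\rvert\ge 1$, whereas the paper splits according to whether both points lie in $B(0,2R)$ with $R$ tied to $\mathrm{supp}(f)$; these are interchangeable. You also make the ``some $\rho$ vs.\ all $\rho$'' equivalence explicit, which the paper leaves implicit in the circular structure of its two implications.

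There is one genuine, though small, slip in the necessity step, precisely at the point you flagged as needing care. The observation that $z=x-y$ sends $\{\lvert x\rvert\ge 4\}$ into $\{\lvert z\rvert\ge 3\}$ when $\lvert y\rvert<1$ is a containment in the \emph{wrong direction} for a lower bound: it tells you that after the change of variables the region of integration is a subset of $\{\lvert z\rvert\ge 3\}$, which yields an upper, not a lower, estimate. To lower-bound you need the reverse inclusion, i.e.\ a radius $\rho$ such that $\lvert z\rvert\ge\rho$ together with $\lvert y\rvert<1$ forces $\lvert z+y\rvert\ge 4$; this requires $\rho\ge 5$, not $\rho=3$. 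Indeed, for $\lvert z\rvert$ slightly larger than $3$ the set $\{y\in B(0,1):\lvert z+y\rvert\ge 4\}$ has measure tending to zero, so your displayed inequality with $\int_{\lvert z\rvert\ge 3}$ and a uniform implicit constant cannot hold for an arbitrary weight. The fix is immediate: replace $3$ by $5$, which yields \eqref{e.subset} for $\rho=5$, and your first paragraph upgrades this to every $\rho>0$. The conclusion of the lemma is unaffected.
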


\begin{proof}
Let us first assume that $C^\infty_0(\R^n)$ is a subset of $W^{s,p,\omega}(\R^n)$. 
Fix $\rho>0$ and $f\in C^\infty_0(\R^n)$
such that $\mathrm{supp}(f)\subset B(0,\rho/2)$ and $f(0)=2$.
Fix $0<\delta<\rho/2$ such that $f(x)\ge 1$ if $\lvert x\rvert<\delta$.
Then
\begin{align*}
\int_{\R^n\setminus B(0,\rho)}\frac{\omega(x)}{\lvert x\rvert^{sp}}\,dx
&= \intav_{B(0,\delta)}
\int_{\R^n\setminus B(0,\rho)}\frac{\omega(x)}{\lvert x\rvert^{sp}}\,dx\,dy\\
&\le \intav_{B(0,\delta)}
\int_{\R^n\setminus B(0,\rho/2)}\frac{1}{\lvert x-y\rvert^{sp}}\,\omega(x-y)\,dx\,dy\\
&\le \frac{1}{\lvert B(0,\delta)\rvert} \int_{\R^n}
\int_{\R^n}\frac{\lvert f(x)-f(y)\rvert^p}{\lvert x-y\rvert^{sp}}\,\omega(x-y)\,dx\,dy=\frac{\lvert f\rvert_{W^{s,p,\omega}(\R^n)}^p}{\lvert B(0,\delta)\rvert}<\infty\,.
\end{align*}
Hence, inequality \eqref{e.subset} holds. 

Conversely,  let us assume that inequality \eqref{e.subset} holds for some $\rho>0$.
Fix $f\in C^\infty_0(\R^n)$ and  choose $R>\rho$ such that $\mathrm{supp}(f)\subset B(0,R)$. 
Suppose that $x$ and $y$ are in the ball $B(0,2R)$, $x\not=y$. Then,
by the mean-value theorem,
\[
\frac{\lvert f(x)-f(y)\rvert^p}{\lvert x-y\rvert^{sp}}\le 
\lVert \nabla f\rVert_{L^\infty(\R^n)}^p \lvert x-y\rvert^{p(1-s)}\le \lVert \nabla f\rVert_{L^\infty(\R^n)}^p (4R)^{p(1-s)}=M\,.
\]
By assumption $\omega$ is  a weight. In particular, it is locally integrable, see \S\ref{s.notation}. Hence,
\begin{equation}\label{e.ens}
\begin{split}
&\int_{B(0,2R)}\int_{B(0,2R)}
\frac{\lvert f(x)-f(y)\rvert^p}{\lvert x-y\rvert^{sp}}\,\omega(x-y)\,dy\,dx
\\&\le M\int_{B(0,2R)}\int_{B(0,2R)} \omega(x-y)\,dy\,dx
\le M\lvert B(0,2R)\rvert \int_{B(0,4R)} \omega(z)\,dz < \infty\,.
\end{split}
\end{equation}
Furthermore, by the fact that $R>\rho$ and inequality \eqref{e.subset},
\begin{equation}\label{e.cross}
\begin{split}
&\int_{\R^n\setminus B(0,2R)}\int_{B(0,2R)}
\frac{\lvert f(x)-f(y)\rvert^p}{\lvert x-y\rvert^{sp}}\,\omega(x-y)\,dy\,dx\\
&\le \int_{\R^n\setminus B(0,2R)}\int_{B(0,R)}
\frac{\lvert f(y)\rvert^p}{\lvert x-y\rvert^{sp}}\,\omega(x-y)\,dy\,dx
\le \lVert f\rVert_{L^p(\R^n)}^p  \int_{\R^n\setminus B(0,\rho)}
\frac{\omega(z)}{\lvert z\rvert^{sp}}\,dz  <\infty\,.
\end{split}
\end{equation}
A similar computation shows that
\begin{equation}\label{e.second_cross}
\int_{B(0,2R)}\int_{\R^n\setminus B(0,2R)}
\frac{\lvert f(x)-f(y)\rvert^p}{\lvert x-y\rvert^{sp}}\,\omega(x-y)\,dy\,dx
<\infty\,.
\end{equation}
Inequalities \eqref{e.ens}--\eqref{e.second_cross}
and the fact $\mathrm{supp}(f)\subset B(0,R)$ yield that
$f\in W^{s,p,\omega}(\R^n)$.
\end{proof}

Next we focus on weights $\omega$ satisfying 
$C^\infty_0(\R^n)\subset W^{s,p,\omega}(\R^n)$.
Under this restriction it is now straightforward to show that continuous functions are
dense in $W^{s,p,\omega}(\R^n)$.
For this purpose, we let $\varphi\in C^\infty_0(B(0,1))$ be a non-negative bump
function such that
 $\int_{\R^n}\varphi(x)\,dx = 1$.
For $j\in\N$ and $x\in\R^n$, we write $\varphi_j(x)=2^{jn}\varphi(2^j x)$.
Recall that
\[
f*\varphi_j(x)=\int_{\R^n}f(x-z)\varphi_j(z)\,dz
\]
defines a smooth function in $\R^n$ 
and $\lim_{j\to\infty} \lVert f-f*\varphi_j\rVert_{L^p(\R^n)}=0$
whenever $f\in L^p(\R^n)$ and $1\le p< \infty$, see e.g. \cite{MR924157}.

\begin{lemma}\label{l.approx}
Let $0< s <1$ and $1\le p<\infty$, and let $\omega$ be a weight in $\R^n$
such that $C^\infty_0(\R^n)$ is a subset $W^{s,p,\omega}(\R^n)$.
Then for every
$f\in W^{s,p,\omega}(\R^n)$  we have
\begin{equation}\label{e.convolv}
\lVert f-f\ast \varphi_j\rVert_{W^{ s ,p,\omega}(\R^n)}\xrightarrow{j\to\infty} 0\,.
\end{equation}
In particular, the set $C^\infty(\R^n)\cap W^{s,p,\omega}(\R^n)$ is dense in $W^{s,p,\omega}(\R^n)$. 
\end{lemma}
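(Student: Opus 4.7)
The plan is to split the full norm as
\[
\lVert f - f\ast\varphi_j\rVert_{W^{s,p,\omega}(\R^n)}^p = \lVert f - f\ast \varphi_j\rVert_{L^p(\R^n)}^p + \lvert f - f\ast \varphi_j\rvert_{W^{s,p,\omega}(\R^n)}^p
\]
and handle the two pieces separately. The $L^p$ convergence is the standard fact recalled just before the statement of the lemma, so only the seminorm part requires genuine work.

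For the seminorm, I would start from the identity
\[
(f - f\ast\varphi_j)(x) - (f - f\ast\varphi_j)(y) = \int_{\R^n}\varphi_j(z)\bigl[(f(x) - f(x-z)) - (f(y) - f(y-z))\bigr]\,dz
\]
and apply Minkowski's integral inequality with respect to the measure $d\mu(x,y) = \lvert x-y\rvert^{-sp}\omega(x-y)\,dx\,dy$ on $\R^n\times\R^n$. Writing $\tau_z f(x) := f(x-z)$, this yields
\[
\lvert f - f\ast \varphi_j\rvert_{W^{s,p,\omega}(\R^n)} \le \int_{\R^n}\varphi_j(z)\,\lvert f - \tau_z f\rvert_{W^{s,p,\omega}(\R^n)}\,dz\,.
\]
Since both $\omega(x-y)$ and $\lvert x-y\rvert$ are invariant under the diagonal translation $(x,y)\mapsto(x+z,y+z)$, a change of variables gives the uniform bound $\lvert f - \tau_z f\rvert_{W^{s,p,\omega}(\R^n)} \le 2\lvert f\rvert_{W^{s,p,\omega}(\R^n)}$ for every $z\in\R^n$.

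The crux is to show that $z \mapsto \lvert f - \tau_z f\rvert_{W^{s,p,\omega}(\R^n)}$ is continuous at the origin. For this I would define $H:\R^n\times\R^n \to \R$ (a.e.) by
\[
H(x,y) = \bigl(f(x) - f(y)\bigr)\left(\frac{\omega(x-y)}{\lvert x - y\rvert^{sp}}\right)^{1/p}\,,
\]
so that $\lVert H\rVert_{L^p(\R^n\times\R^n)} = \lvert f\rvert_{W^{s,p,\omega}(\R^n)} < \infty$. Because the weighting factor is unchanged by the diagonal translation $(x,y)\mapsto (x-z,y-z)$, a direct computation yields
\[
\lvert f - \tau_z f\rvert_{W^{s,p,\omega}(\R^n)} = \lVert H - H(\cdot - z,\cdot - z)\rVert_{L^p(\R^n\times\R^n)}\,,
\]
and the right-hand side tends to $0$ as $z\to 0$ by the classical continuity of translations in $L^p(\R^n\times\R^n)$. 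Combining this with $\mathrm{supp}(\varphi_j)\subset B(0,2^{-j})$ and the uniform bound, a straightforward dominated convergence argument applied to the Minkowski estimate yields \eqref{e.convolv}.

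For the density statement, the triangle inequality together with the already established convergence shows that $f\ast\varphi_j\in C^\infty(\R^n)\cap W^{s,p,\omega}(\R^n)$ for every $j$, and \eqref{e.convolv} then gives the required approximation. The main potential obstacle is the $L^p(\R^n\times\R^n)$ reformulation of $\lvert f - \tau_z f\rvert_{W^{s,p,\omega}(\R^n)}$, since this is precisely the place where the hypothesis that $\omega$ depends only on differences of variables is essentially used; without this translation invariance one would lose the clean reduction to classical continuity of translations, and a more refined approximation scheme in the spirit of \cite{Dipierro} would be needed.
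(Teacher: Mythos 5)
Your proof is correct, and it follows a genuinely different route from the paper's. The paper splits the seminorm into a near-diagonal part ($|x-y|<\rho$) and a far-diagonal part; the near-diagonal contribution is made uniformly small in $j$ by monotone convergence together with a Jensen-type estimate for $f\ast\varphi_j$, while the far-diagonal contribution is controlled by the decay condition $\int_{\R^n\setminus B(0,\rho)}\omega(x)|x|^{-sp}\,dx<\infty$ (supplied via Lemma~\ref{l.c_characterization} by the hypothesis $C^\infty_0(\R^n)\subset W^{s,p,\omega}(\R^n)$) multiplied by $\lVert f-f\ast\varphi_j\rVert_{L^p}^p\to 0$. You instead apply Minkowski's integral inequality with respect to the measure $|x-y|^{-sp}\omega(x-y)\,dx\,dy$ on $\R^{2n}$ to reduce the whole seminorm estimate to the map $z\mapsto|f-\tau_zf|_{W^{s,p,\omega}(\R^n)}$, and then observe—exploiting the diagonal translation invariance of the kernel—that this is exactly $\lVert H-H(\cdot-z,\cdot-z)\rVert_{L^p(\R^{2n})}$ for $H\in L^p(\R^{2n})$, so classical continuity of translations applies directly. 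This buys you two things. It is shorter and avoids the $\rho$-splitting altogether, and, notably, it does not invoke the hypothesis $C^\infty_0(\R^n)\subset W^{s,p,\omega}(\R^n)$ at any point: the uniform bound $|f-\tau_zf|_{W^{s,p,\omega}}\le 2|f|_{W^{s,p,\omega}}$, continuity at $z=0$, and the containment $\mathrm{supp}(\varphi_j)\subset B(0,2^{-j})$ suffice. So your argument shows the convergence \eqref{e.convolv} holds for any weight $\omega$, whereas the paper's proof uses the extra assumption in the far-diagonal term. (The hypothesis remains relevant elsewhere in the paper—e.g., to know that $C^\infty_0(\R^n)$ lies in the space in the first place—but for this particular lemma it is not needed by your route.) One small presentational caveat: the phrase ``dominated convergence'' should be read after the change of variables $z=2^{-j}u$, which turns $\int\varphi_j(z)g(z)\,dz$ into $\int\varphi(u)g(2^{-j}u)\,du$ with a fixed dominating majorant; as written the convergence is really just continuity of $g$ at the origin combined with concentration of $\varphi_j$.
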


\begin{proof}
The basic ideas for the proof are from \cite{HV}. 
Since the convolutions $f\ast \varphi_j$ converge to $f$ in $L^p(\R^n)$ when $j\to\infty$, it suffices
to show that 
$\lvert f-f*\varphi_j\rvert_{W^{s,p,\omega}(\R^n)}\to 0$ when
$j\to\infty$.
Fix $\varepsilon>0$. We write
\begin{align*}
\lvert f-f\ast \varphi_j\rvert_{W^{ s ,p,\omega}(\R^n)}^p
= \int_{\R^n} \int_{\R^n} \frac{\lvert f(x)-f\ast \varphi_j(x) -f(y)+f\ast \varphi_j(y)\rvert^p}{\lvert x-y\rvert^{s  p}}\,\omega(x-y)\,dy\,dx\,.
\end{align*}
 Since $\lvert f\rvert_{W^{s,p,\omega}(\R^n)}<\infty$, we may  apply the monotone convergence theorem in $\R^n\times \R^n$ in order to obtain a number
$\rho=\rho(\varepsilon,f,\omega)>0$ such that
\begin{equation}\label{e.eps}
\int_{\R^n} \int_{B(x,\rho)} \frac{\lvert f(x)-f(y)\rvert^p}{\lvert x-y\rvert^{s  p}}\,\omega(x-y)\,dy\,dx <\varepsilon\,.
\end{equation}
Now, for any $j\in\N$
\begin{align*}
\int_{\R^n} \int_{B(x,\rho)}&\frac{\lvert f\ast \varphi_j(x)-f\ast \varphi_j(y)\rvert^p}{\lvert x-y\rvert^{s  p}}\,\omega(x-y)\,dy\,dx \\&\lesssim  2^{-jn(p-1)} \int_{\R^n} \varphi_j(z)^p \int_{\R^n} \int_{B(x,\rho)} \frac{\lvert f(x-z)-f(y-z)\rvert^p}{\lvert x-y\rvert^{s  p}}\,
\omega(x-y)\,dy\,dx\,dz\\
&= 2^{-jn(p-1)}\int_{\R^n} \varphi_j(z)^p \int_{{\R^n}}\int_{B(x,\rho)}\frac{\lvert f(x)-f(y)\rvert^p}{\lvert x-y\rvert^{s  p}}\,\omega(x-y)\,dy\,dx\,dz\,.
\end{align*}
Hence, we obtain that
\begin{equation}\label{e.sec}
\begin{split}
&\int_{\R^n} \int_{B(x,\rho)} \frac{\lvert f\ast \varphi_j(x)-f\ast \varphi_j(y)\rvert^p}{\lvert x-y\rvert^{s  p}}\,\omega(x-y)\,dy\,dx
\\&\qquad\qquad\qquad\qquad\lesssim \int_{\R^n}\int_{B(x,\rho)}\frac{\lvert f(x)-f(y)\rvert^p}{\lvert x-y\rvert^{s  p}}\,\omega(x-y)\,dy\,dx< \varepsilon\,.
\end{split}
\end{equation}
From \eqref{e.eps} and \eqref{e.sec} it follows that
\begin{equation}\label{e.step1}
\sup_{j\in\N} \int_{\R^n} \int_{B(x,\rho)} \frac{\lvert f(x)-f\ast \varphi_j(x) -f(y)+f\ast \varphi_j(y)\rvert^p}{\lvert x-y\rvert^{s  p}}\,\omega(x-y)\,dy\,dx \lesssim \varepsilon\,.
\end{equation}
On the other hand, since $C^\infty_0(\R^n)\subset W^{s,p,\omega}(\R^n)$ by assumptions,  
Lemma \ref{l.c_characterization} yields
\[
\int_{\R^n\setminus B(0,\rho)}\frac{\omega(x)}{\lvert x\rvert^{sp}}\,dx < \infty\,.
\]
Moreover, by assumptions, we have $f\in L^p(\R^n)$ and therefore
\begin{equation}\label{e.step2}
\begin{split}
&\int_{\R^n} \int_{{\R^n}\setminus B(x,\rho)} \frac{\lvert f(x)-f\ast \varphi_j(x) -f(y)+f\ast \varphi_j(y)\rvert^p}{\lvert x-y\rvert^{s  p}}\,\omega(x-y)\,dy\,dx \\
&\qquad \lesssim \bigg(\int_{\R^n\setminus B(0,\rho)}
\frac{\omega(x)}{\lvert x\rvert^{sp}}
 \,dx \bigg) \int_{\R^n} \lvert f(x)-f\ast \varphi_j(x)\rvert^p\,dx\xrightarrow{j\to\infty} 0\,;
\end{split}
\end{equation}
here we again used the fact that $f*\varphi_j$ converges to $f$ in $L^p(\R^n)$ when $j\to\infty$.
By combining the estimates \eqref{e.step1} and \eqref{e.step2}, we find that
 $\lvert f-f*\varphi_j\rvert_{W^{s,p,\omega}(\R^n)}\to 0$ when $j\to \infty$.
\end{proof}

\section{A boundedness result for $M_R$}\label{s.proof_main}

We formulate and prove our main result, i.e., Theorem \ref{t.m.bounded},
that
provides a boundedness result for the maximal operator
$M_R$ (see  \eqref{d.max}) from a fractional $A_p$ weighted Sobolev space
to its $R$-modification, both of which  are defined in \S\ref{s.weighted_Sobo}.

The main result is akin to the Muckenhoupt's theorem, i.e., Proposition \ref{t.ap_weightedM}, in that
both sides of inequality \eqref{e.max_bdd} incorporate an $A_p$ weight. 
Another interesting aspect is how
the left-hand side of inequality  \eqref{e.max_bdd}
depends on the given $R$-function; from the viewpoint
of applications, such a dependence is 
both flexible and straightforward to work with.
Moreover, as we will see in \S \ref{holder}, the $R$-dependence  is 
essentially the best possible in this generality.

Recall our notational convention $\mathrm{dist}(x,\partial G)=\infty$ if $x\in G=\R^n$.

\begin{theorem}\label{t.m.bounded}
Assume that  $\emptyset\not=G\subset \R^n$ is an open set, $0\le s \le 2$, and $1<p < \infty$. 
Fix a measurable function $R:G\to \R$ satisfying inequality $0\le R(x)\le \dist(x,\partial G)$ for every $x\in G$.
Then, 
if $\omega$ is an $A_p$ weight in $\R^n$, 
there exists a constant
$C>0$ such that inequality
\begin{equation}\label{e.max_bdd}
\begin{split}
\int_G \int_G \frac{\lvert M_R(f)(x)-M_R(f)(y)\rvert^p}{(\lvert x-y\rvert+\lvert R(x)-R(y)\rvert)^{sp}}&\,\omega(x-y)\,dy\,dx
 \\& \le C \int_G \int_G \frac{ \lvert f(x)-f(y)\rvert^p}{\lvert x-y\rvert^{sp}}\,\omega(x-y)\,dy\,dx
\end{split}
\end{equation}
holds for every $f\in L^p(G)$. The constant $C$ depends
only on $n$, $p$ and the $A_p$ constant of $\omega$.
\end{theorem}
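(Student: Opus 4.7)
My plan is to reduce Theorem~\ref{t.m.bounded} to the $L^p$-boundedness of the Hardy--Littlewood maximal operator $M$ (Proposition~\ref{t.ap_weightedM}) via a pointwise-type estimate, in the spirit of the Kinnunen--Lindqvist argument. For fixed $x, y \in G$, let $h = y - x$ and $F_h(z) = |f(z) - f(z+h)|$, extending $f$ by zero outside $G$. The shift identity $\intav_{B(x,r)}|f(v)|\,dv = \intav_{B(y,r)}|f(v-h)|\,dv$ combined with the triangle inequality yields, for every $r \ge 0$,
\[
\intav_{B(x,r)} |f| \le \intav_{B(y,r)} |f| + M(F_h)(x).
\]
I would assume without loss of generality that $M_R(f)(x) \ge M_R(f)(y)$, pick a near-optimal radius $r \in [0, R(x)]$ for $M_R(f)(x)$, and split into two cases. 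When $r \le R(y)$, we have $\intav_{B(y,r)}|f| \le M_R(f)(y)$ and the clean estimate $|M_R(f)(x) - M_R(f)(y)| \le M(F_h)(x)$ follows.

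The main obstacle is the case $r > R(y)$, in which $\intav_{B(y,r)} |f|$ is not controlled by $M_R(f)(y)$ in general. The $R$-modification $d := |x-y| + |R(x)-R(y)|$ in the denominator of the LHS is precisely what compensates: since $r > R(y)$ forces $r - R(y) \le R(x) - R(y) \le d$ and $|x-y| \le d$, parameterizing $v = x + r u$ and $w = y + R(y) u$ for $u \in B(0,1)$ gives $|v - w| \le 2d$. Comparing the averages via this parameterization,
\[
\intav_{B(x,r)} |f| - \intav_{B(y,R(y))} |f| \le \intav_{B(0,1)} |f(x + r u) - f(y + R(y) u)|\,du,
\]
which splits via the intermediate point $f(y + r u)$ into a translation part (bounded by $M(F_h)(x)$ after changing variables $z = x + r u$) and a ``dilation'' part $\intav_{B(y,R(y))} |f(y + \mu(z-y)) - f(z)|\,dz$ with $\mu = r/R(y) > 1$; the latter involves radial shifts of length at most $r - R(y) \le d$, and after integration against $d^{-sp}\omega(x-y)\,dy\,dx$ can be dominated by the RHS via an application of Muckenhoupt to an appropriately averaged shift.

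In the integration step, $d \ge |h|$ gives $d^{-sp} \le |h|^{-sp}$; Fubini with $h = y-x$ then rewrites the $M(F_h)(x)$-type contribution as
\[
\int_{\R^n} \frac{\omega(h)}{|h|^{sp}} \int_{\R^n} M(F_h)(x)^p\,dx\,dh,
\]
and the $L^p$-boundedness of $M$ (Proposition~\ref{t.ap_weightedM} with the trivial $A_p$ weight) gives the inner integral $\le C_p \int |f(x) - f(x+h)|^p\,dx$; reversing Fubini produces a constant multiple of the RHS. A symmetric $M(F_h)(y)$ contribution (coming from swapping the roles of $x$ and $y$) is handled identically. The hard part will be the honest treatment of the case $r > R(y)$: making the dilation correction from the radius mismatch fit cleanly into the $R$-modified distance $d$ is the novel technical input going beyond the Lipschitz-$R$ case of \cite[Theorem~1.1]{LV2}, and it is precisely this interaction that explains why the $R$-modified seminorm on the LHS is the ``correct'' target space in the general (measurable) setting.
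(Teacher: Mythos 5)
Your reduction in the easy case is sound and, in fact, more elementary than the paper's: for $r\le R(y)$, the bound $|M_R(f)(x)-M_R(f)(y)|\le \intav_{B(x,r)}\chi_G(v)\chi_G(v+h)|f(v)-f(v+h)|\,dv$ (with $h=y-x$), followed by $d:=|x-y|+|R(x)-R(y)|\ge|h|$, Fubini in $h$, and the \emph{unweighted} $L^p(\R^n)$-boundedness of the Hardy--Littlewood operator $M$, gives the target inequality without invoking the two-variable operators $M_{ij}$. You must, however, use the indicator-restricted integrand rather than the zero-extension of $f$: the boundary terms $|f(v)|$ for $v+h\notin G$ are not controlled by the seminorm over $G\times G$. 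This is harmless here precisely because $r\le R(x)$ and $r\le R(y)$ force $B(x,r)\subset G$ and $B(y,r)\subset G$.

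The hard case $r>R(y)$ has a real gap, for two separate reasons. First, the intermediate point $y+ru$ ($|u|<1$) lies in $B(y,r)$, but $r>R(y)$ does not give $B(y,r)\subset G$; if $R(y)=\dist(y,\partial G)$ then $y+ru$ leaves $G$ on a set of $u$ of positive measure, and your translation part degenerates to $|f(x+ru)|$ there, which the right-hand side does not control. Second, the dilation part $\intav_{B(y,R(y))}|f(y+\mu(z-y))-f(z)|\,dz$ with $\mu=r/R(y)$ pairs, after Jensen and the bound $|z-(y+\mu(z-y))|=(\mu-1)|z-y|\le d$, with the weight $\omega(x-y)$ rather than with $\omega\big((1-\mu)(z-y)\big)$ as the seminorm requires; these arguments differ in both direction and magnitude, and for a general $A_p$ weight they are not comparable, so there is no way to match the integrand to the right-hand side. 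Muckenhoupt's theorem governs \emph{translation} averages, not dilations, and ``Muckenhoupt applied to an appropriately averaged shift'' is not a valid step. The paper (Proposition~\ref{p.maximal}) avoids both obstructions by splitting on $r(x)\lessgtr d$ rather than on $r\lessgtr R(y)$, and, in the hard case $r(x)>d$, taking $r_2=r(x)-d\le R(y)$ and inserting as intermediary the \emph{average of $f$ over the ball} $B\big(y+\tfrac{r_2}{r_1}z,\,d\big)\cap G$ rather than a single shifted point: these ball centers lie in $B(y,r_2)\subset G$, so no zero extension appears, and every shift acts in tandem on both arguments, preserving $x-y$ and hence $\omega(x-y)$. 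The resulting error terms $E_1,E_2$ are organized as $M_{ij}\big(\omega_m M_{kl}(Sf)\big)$ and dispatched by Proposition~\ref{t.Mapp}. To complete your proposal, your Case~2 needs to be replaced by this kind of translation-only comparison to an intermediate \emph{average}, with the case boundary moved from $R(y)$ to $d$.
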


This result is a far-reaching extension of  \cite[Theorem 1.1]{LV2}
whose proof, in turn, applies ideas from  \cite[Theorem 3.2]{MR2579688}. 
Here delicate modifications are required in the proofs due to  the $A_p$ weight and the $R$-function. In the sequel, we
follow outline of the proof in \cite{LV2}; in particular, we  repeat many  details therein without
further notice. 

The proof of Theorem \ref{t.m.bounded}
will be completed at the end of this section.
The main technical tool is a  pointwise inequality that is given in Proposition \ref{p.maximal}.
Moreover, some implications of the Muckenhoupt's theorem are also needed, see Proposition \ref{t.Mapp}. In order to state the latter proposition, we first need some preparations.

Let us fix $i,j\in \{0,1\}$. 
For a measurable function  $F$ on $\R^{2n}$ we write
\begin{equation}\label{e.lower_dim}
M_{ij}(F)(x,y) = \sup_{r>0}\intav_{B(0,r)} \lvert F(x+iz,y+jz)\rvert \,dz
\end{equation}
whenever the right-hand side is well-defined, i.e., for almost every $(x,y)\in \R^{2n}$
by Fubini's theorem.
Observe that $M_{00}(F)=\lvert F\rvert$.
The measurability
of $M_{ij}(F)$  can be checked by first noting that the supremum in \eqref{e.lower_dim} can
be restricted to the rational numbers $r>0$
and then adapting
the proof of \cite[Theorem 8.14]{MR924157} with each $r$ separately.

By applying Fubini's theorem in appropriate coordinates
and $L^p(\R^n)$-boundedness of the Hardy--Littlewood maximal operator $f\mapsto Mf$ we find that $M_{ij}=(F\mapsto M_{ij}(F))$ is 
a bounded operator on $L^p(\R^{2n})$ whenever $1<p<\infty$.
Furthermore, we need the following $A_p$ weighted norm inequalities
that eventually rely on Muckenhoupt's theorem.

\begin{proposition}\label{t.Mapp}
Let $1<p<\infty$. Then, if $\omega$ is an $A_p$ weight in $\R^n$, there
exists a constant $C=C(n,p,[\omega]_{A_p})>0$ such that
\begin{equation}\label{e.right}
\int_{\R^n}\int_{\R^n} \big(M_{kl}(F)(x,y)\big)^p \,\omega(x-y)\,dy\,dx \le C \int_{\R^n}\int_{\R^n} \lvert F(x,y)\rvert^p \,\omega(x-y)\,dy\,dx
\end{equation}
whenever $F$ is a measurable function in $\R^{2n}$ and $k,l\in \{0,1\}$ are such that $kl=0$.
\end{proposition}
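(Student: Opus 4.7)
The plan is to reduce inequality \eqref{e.right} to a one-variable application of Muckenhoupt's theorem (Proposition \ref{t.ap_weightedM}), by freezing one of the two variables and using translation invariance of the $A_p$ constant.

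First I would dispose of the case $k=l=0$: here $M_{00}(F)=\lvert F\rvert$ and \eqref{e.right} holds with $C=1$. By the symmetry in the roles of $x$ and $y$ (swap the variables and replace $\omega(\cdot)$ with $\omega(-\cdot)$, noting that $\omega(-\cdot)$ is an $A_p$ weight with the same $A_p$ constant), it suffices to treat the case $(k,l)=(1,0)$.

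For fixed $y\in\R^n$, set $G_y(x)=F(x,y)$. Then from definition \eqref{e.lower_dim} we read off
\[
M_{10}(F)(x,y)=\sup_{r>0}\intav_{B(0,r)}\lvert F(x+z,y)\rvert\,dz= MG_y(x)\,,
\]
where $M$ is the Hardy--Littlewood maximal operator in $\R^n$. Next I observe that, for fixed $y$, the function $x\mapsto \omega_y(x):=\omega(x-y)$ is a translate of $\omega$, and translation leaves the $A_p$ constant invariant, i.e., $[\omega_y]_{A_p}=[\omega]_{A_p}$; this is immediate from the definition of the $A_p$ constant since translating $\omega$ corresponds to translating the cube $Q$ in the supremum.

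Applying Proposition \ref{t.ap_weightedM} to $G_y$ with weight $\omega_y$ yields a constant $C=C(n,p,[\omega]_{A_p})>0$, independent of $y$, such that
\[
\int_{\R^n}\big(MG_y(x)\big)^p\,\omega(x-y)\,dx\le C\int_{\R^n}\lvert G_y(x)\rvert^p\,\omega(x-y)\,dx\,.
\]
Integrating this inequality in $y$ over $\R^n$ and using Fubini (which is legitimate by nonnegativity of the integrands) produces
\[
\int_{\R^n}\int_{\R^n}\big(M_{10}(F)(x,y)\big)^p\,\omega(x-y)\,dy\,dx\le C\int_{\R^n}\int_{\R^n}\lvert F(x,y)\rvert^p\,\omega(x-y)\,dy\,dx\,,
\]
which is \eqref{e.right} in the case $(k,l)=(1,0)$. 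The only delicate point, really, is the translation invariance of $[\omega]_{A_p}$, which makes the constant uniform in $y$ so the subsequent integration in $y$ gives the desired bound; the constraint $kl=0$ is used precisely so that one of the two variables can be frozen and the other handled by the one-dimensional maximal theorem.
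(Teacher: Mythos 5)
Your proof is correct and follows essentially the same route as the paper's: freeze one variable, observe that $\omega(\cdot-y)$ (respectively $\omega(x-\cdot)$) is an $A_p$ weight with the same $A_p$ constant, apply Muckenhoupt's theorem in the remaining variable, and integrate. The paper treats the case $(k,l)=(0,1)$ explicitly while you treat $(1,0)$, but these are symmetric and the argument is identical.
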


\begin{proof}
We focus on the case $(k,l)=(0,1)$; the case $(k,l)=(1,0)$ is analogous, and the claim is trivial when $k=0=l$.
Let us consider a measurable function $F$ on $\R^{2n}$ for which the double integral on the right-hand side of \eqref{e.right} is finite. 
By dilation and translation invariance of the $A_p$-condition, we find
that the
function
$y\mapsto \omega_{x}(y):= \omega(x-y)$, $x\in\R^n$,
belongs to $A_p$, and its $A_p$ constant coincides with 
$[\omega]_{A_p}$.
Hence, by Proposition \ref{t.ap_weightedM},
\begin{align*}
&\int_{\R^n}\int_{\R^n} \big(M_{01}(F)(x,y)\big)^p \,\omega(x-y)\,dy\,dx
=\int_{\R^n}\int_{\R^n} \big(M(F(x,\cdot))(y)\big)^p\,\omega_{x}(y) \,dy\,dx\\
&\lesssim \int_{\R^n}\int_{\R^n}\lvert F(x,y)\rvert^p\,\omega_{x}(y) \,dy\,dx
=\int_{\R^n}\int_{\R^n}\lvert F(x,y)\rvert^p\,\omega(x-y)\,dy\,dx\,,
\end{align*}
and the proof is complete.
\end{proof}

\begin{remark}
The directional maximal operators $M_{ij}$ are 
dominated by the so-called strong maximal operator, whose 
certain weighted
norm inequalities can be found in \cite[\S IV.6]{MR807149}.
\end{remark}

The following proposition gives a certain extension of  inequality \eqref{e.crelle}.
But first let us introduce further convenient notation that is used in the
remaining part of this section.
We write $\omega_0(x,y)=\omega(x-y)^{1/p}$ and
$\omega_1(x,y)=\omega(y-x)^{1/p}$ if $x,y\in\R^n$ and $\omega$
is an $A_p$ weight in $\R^n$.
For $f\in L^p(G)$ we denote
\[
S_R(f)(x,y) = S_{R,G,s}(f)(x,y)=\frac{ \chi_G(x)\chi_G(y) \lvert f(x)-f(y)\rvert}{(\lvert x-y\rvert + \lvert  R(x)-R(y)\rvert)^{s}}
\]
for almost every $(x,y)\in \R^{2n}$; we also abbreviate $S(f)=S_0(f)$.

\begin{proposition}\label{p.maximal}
Assume that $\emptyset\not=G\subset\R^n$ is an open set, $0\le s\le 2$, and $1<p<\infty$. 
Let $\omega$ be an $A_p$  weight in $\R^n$ and let
  $R:G\to \R$ be a measurable function such that 
  \[0\le R(x) \le\dist(x,\partial G)\] for every $x\in G$. 
  Then there exists a constant $C=C(n)>0$ such that,
for almost every $(x,y)\in\R^{2n}$,  inequality
\begin{equation}\label{e.dom}
\begin{split}
\omega(x-y)^{1/p} &S_R(  M_R(f))(x,y)
\\&\le C\sum_{\substack{ {i,j,k,l,m\in \{0,1\}} \\ {kl=0} }}\big( M_{ij}( \omega_m M_{kl}(S f))(x,y) +M_{ij}( \omega_m M_{kl}(S f))(y,x)\big)
\end{split}
\end{equation}
holds whenever  $f\in L^p(G)$ satisfies the condition 
$\{ \omega_0 S f, \omega_1 S f\}\subset L^p(\R^{2n})$.
\end{proposition}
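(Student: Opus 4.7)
I would first restrict to $x, y \in G$, since otherwise the left-hand side of \eqref{e.dom} vanishes. By the symmetry $x \leftrightarrow y$ which generates the $(y, x)$-terms on the right-hand side, assume $M_R(f)(x) \ge M_R(f)(y)$. Write $\rho := |x-y| + |R(x) - R(y)|$. Given $\varepsilon > 0$, select $r \in [0, R(x)]$ with $M_R(f)(x) \le \fint_{B(x, r)}|f| + \varepsilon$.

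\textbf{Key choice of comparison radius and uniform distance estimate.} The crucial definition is $r' := \max\{0,\, r - |R(x) - R(y)|\}$. From $R(x) \le R(y) + |R(x) - R(y)|$ one has $0 \le r' \le R(y)$, so $M_R(f)(y) \ge \fint_{B(y, r')} |f|$ (with $\fint_{B(y,0)}|f| = |f(y)|$). Parametrising $z = x + rt$ and $w = y + r't$ for $t \in B(0, 1)$, the bound $r - r' \le |R(x) - R(y)|$ (which holds in either branch of the maximum) gives the uniform estimate
\[
|(x + rt) - (y + r't)| \le |x-y| + (r - r')|t| \le |x-y| + |R(x) - R(y)| = \rho, \qquad t
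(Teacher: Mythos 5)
Your setup mirrors the paper's opening reductions (restricting to Lebesgue points in $G$, using symmetry to assume $M_R(f)(x)\ge M_R(f)(y)$, picking a near-optimal $r$ and a comparison radius for $y$), and the uniform bound $|(x+rt)-(y+r't)|\le\rho$ is a clean observation that plays the same role as the paper's inclusion and equivalence estimates \eqref{e.inclusion}--\eqref{e.equivalence}. However, the proposal is truncated precisely at the point where the real work of the proof begins, and the step that follows is not routine. After your reduction one has (essentially)
\begin{equation*}
S_R(M_R(f))(x,y)\lesssim \vint_{B(0,1)} Sf(x+rt,\,y+r't)\,dt + O(\varepsilon)\,,
\end{equation*}
and here the two arguments are shifted by \emph{different} vectors $rt$ and $r't$. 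None of the operators $M_{ij}$ produces an average of this type: $M_{11}$ shifts both slots by the \emph{same} vector, and $M_{10}$, $M_{01}$ shift only one. More seriously, the weight factor is not preserved under your change of variables: $\omega_0(x,y)$ is not equal to $\omega_0(x+rt,\,y+r't)$ when $r\ne r'$, and an $A_p$ weight need not be continuous, so there is no pointwise comparison available. This is exactly what the paper's decomposition into $E_1$ and $E_2$ is designed to fix: inserting an intermediate average over a ball of radius comparable to $\rho$ replaces the unequal shifts by equal shifts $z\mapsto(x+z,y+z)$ (for which $\omega_0(x,y)=\omega_0(x+z,y+z)$), and the intermediate average itself becomes the inner operator $M_{01}$, yielding the composed form $M_{11}(\omega_m M_{01}(Sf))$. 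Your slightly different choice $r'=\max\{0,\,r-|R(x)-R(y)|\}$ (rather than the paper's implicit $r_2=\max\{0,\,r-\rho\}$) is admissible, but it does not by itself remove the need for this intermediate averaging. As written, the proposal has not yet shown how to reach the composed maximal-operator structure on the right-hand side of \eqref{e.dom}, nor how to carry the weight through, and these are the substantive parts of the argument.
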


\begin{proof}
By replacing the function $f$ with $\lvert f\rvert$ we may assume that
$f\ge 0$. Since $f\in L^p(G)$ and, hence, $M_R(f)\in L^p(G)$ we may restrict
ourselves to $(x,y)\in G\times G$ for which both 
$x$ and $y$ are Lebesgue points of $f$ and both $M_R(f)(x)$ and $M_R(f)(y)$ are finite. 
By symmetry,  and changing the weight to $\widetilde\omega$ if necessary, we may further assume that  $M_R(f)(x)>M_R(f)(y)$. 
These reductions allow us to
find \[0\le r(x)\le R(x)\quad \text{ and } \quad 0\le r(y)\le R(y)\] such that 
\begin{align*}
S_R(M_R(f))(x,y)&=\frac{\lvert M_R(f)(x)-M_R(f)(y)\rvert}{(\lvert x-y\rvert + \lvert  R(x)-R(y)\rvert)^{s}}
=\frac{\lvert \intav_{B(x,r(x))}f\,-\intav_{B(y,r(y))}f\,\rvert}{(\lvert x-y\rvert + \lvert  R(x)-R(y)\rvert)^{s}}\,.
\end{align*}
Moreover, since $M_R(f)(x) > M_R(f)(y)$, we find that inequality
\begin{equation}\label{e.lahtee}
S_R(M_R(f))(x,y) \leq\frac{\lvert \intav_{B(x,r(x))}f\,-\intav_{B(y,r_2)}f\,\rvert}{(\lvert x-y\rvert + \lvert  R(x)-R(y)\rvert)^{s}}
\end{equation}
is valid for any number $0\le r_2\le R(y)$;
this number will
be chosen in a convenient manner in the two case studies below.

{\bf{Case $r(x)\le \lvert x-y\rvert + \lvert R(x)-R(y)\rvert$}.}
Let us denote $r_1=r(x)$ and choose
\begin{equation}\label{e.case}
r_2=0\,.
\end{equation}
If $r_1=0$, then we get from \eqref{e.lahtee} and \eqref{e.case}---and our notational convention \eqref{e.zero}---that 
\[
\omega_0(x,y) S_R(M_R(f))(x,y)\leq \omega_0(x,y) S(f)(x,y)\,.
\]
Suppose then that $r_1>0$. Now, by \eqref{e.lahtee},
\begin{align*}
&\omega_0(x,y) S_R(M_R(f))(x,y)\\&\le 
\frac{\omega_0(x,y)}{(\lvert x-y\rvert + \lvert  R(x)-R(y)\rvert)^{s}}\bigg{|} 
  \intav_{B(x,r_1)}f(z)\,dz\,-\intav_{B(y,r_2)}f(z)\,dz\,\bigg{|}  \\
  &=\frac{\omega_0(x,y)}{(\lvert x-y\rvert + \lvert  R(x)-R(y)\rvert)^{s}} \bigg{|}\intav_{B(x,r_1)} \big( f(z)-f(y) \big)\,dz\,\,\bigg{|}\\
& \lesssim \omega_0(x,y) \intav_{B(0,r_1)} \frac{\chi_G(x+z)\chi_G(y)\lvert f(x+z)-f(y)\rvert}{\lvert x+z-y\rvert^{s}}\,dz\le 
\omega_0(x,y)\,M_{10}(S f)(x,y)\,.
\end{align*}
We have shown that, in the case under consideration,
\begin{align*}
\omega_0(x,y)S_R(M_R(f))(x,y)
\lesssim \omega_0(x,y) S (f)(x,y) + \omega_0(x,y)\,M_{10}(S f)(x,y)\,.
\end{align*}
It is clear that inequality \eqref{e.dom} follows; recall that $M_{00}$ is the identity
operator when restricted to non-negative functions.

{\bf{Case $r(x)>  \lvert x-y\rvert + \lvert R(x)-R(y)\rvert$}.} 
Let us denote $r_1=r(x)>0$ and choose 
\[
0<r_2 = r(x) - \lvert x-y\rvert - \lvert R(x)-R(y)\rvert\le R(y)\,.
\] 
We then have  
\begin{align*}
&\bigg{|}\intav_{B(x,r_1)}f(z)\,dz\,-\intav_{B(y,r_2)}f(z)\,dz\,\bigg{|}
=\bigg{|}\intav_{B(0,r_1)} \bigg(f(x+z)-f(y+\frac{r_2}{r_1}z)\bigg)\,dz\,\bigg{|}\\
&= \bigg{|}\intav_{B(0,r_1)}\bigg(f(x+z)-\intav_{B(y+\frac{r_2}{r_1}z,\lvert x-y\rvert+\lvert R(x)-R(y)\rvert)\cap G}f(a)\,da\\ &\qquad\qquad\qquad \qquad  +\intav_{B(y+\frac{r_2}{r_1}z,\lvert x-y\rvert+\lvert R(x)-R(y)\rvert)\cap G}f(a)\,da
-f(y+\frac{r_2}{r_1}z)\bigg)\,dz\,\bigg{|}\\ 
&\leq E_1+E_2\,,
\end{align*}
where we have written
\begin{align*}
E_1 &= \intav_{B(0,r_1)}\bigg(\intav_{B(y+\frac{r_2}{r_1}z,\lvert x-y\rvert+\lvert R(x)-R(y)\rvert)\cap G}|f(x+z)-f(a)|\,da\,\bigg)\,dz\,,\\
E_2&= \intav_{B(0,r_1)}\bigg(\intav_{B(y+\frac{r_2}{r_1}z,\lvert x-y\rvert+\lvert R(x)-R(y)\rvert)\cap G}|f(y+\frac{r_2}{r_1}z)-f(a)|\,da\,\bigg)\,dz\,.
\end{align*}
We  estimate both of these terms separately,
but first we need certain auxiliary estimates.

Recall that $r_2=r_1-\lvert x-y\rvert -\lvert R(x)-R(y)\rvert$. 
Hence, for every $z\in B(0,r_1)$,
\begin{align*}
\lvert y+\frac{r_2}{r_1}z-(x+z)\rvert&=\lvert y-x+\frac{(r_2-r_1)}{r_1} z\rvert
 \leq 2\lvert x-y\rvert
+\lvert R(x)-R(y)\rvert \,.
\end{align*}
This, in turn, implies that 
\begin{equation}\label{e.inclusion}
B(y+\frac{r_2}{r_1}z,\lvert x-y\rvert+\lvert R(x)-R(y)\rvert)\subset B(x+z,3\lvert x-y\rvert
+2\lvert R(x)-R(y)\rvert) 
\end{equation}
if $z\in B(0,r_1)$.
Since $r_1> \lvert x-y\rvert + \rvert R(x)-R(y)\rvert$  and $\{y+\frac{r_2}{r_1}z, x+z\}\subset B(x,r_1)\subset G$ 
if $\lvert z\rvert< r_1$,
we obtain the two equivalences
\begin{equation}\label{e.equivalence}
\begin{split}
\lvert B(y+\frac{r_2}{r_1}z,\lvert x-y\rvert+\lvert R(x)-R(y)\rvert)\cap G\rvert &\simeq 
(\lvert x-y\rvert
+\lvert R(x)-R(y)\rvert)^n\\
& \simeq  \lvert B(x+z,3\lvert x-y\rvert 
+2\lvert R(x)-R(y)\rvert)\cap G\rvert 
\end{split}
\end{equation}
for every $z\in B(0,r_1)$. Here the implied constants depend only on $n$.

{\bf An estimate for $E_1$}.
The inclusion \eqref{e.inclusion} and equivalences \eqref{e.equivalence} show that, in the definition of $E_1$, we can replace the set over
which the inner integral its taken  by the set
\[B(x+z,3\lvert x-y\rvert+2\lvert R(x)-R(y)\rvert)\cap G\] and, at the same time,  control the error term while integrating on average. That is,
\begin{align*}
&E_1 \lesssim \intav_{B(0,r_1)}\bigg(\intav_{B(x+z,3\lvert x-y\rvert
+2\lvert R(x)-R(y)\rvert)\cap G}  |f(x+z)-f(a)|\,da\,\bigg)\,dz\,.
\end{align*}
By observing that $x+z$ and $a$ in the last double integral belong to  $G$, and using \eqref{e.equivalence} again, we can continue as follows:
\begin{align*}
&\frac{E_1 \omega_0(x,y)}{(\lvert x-y\rvert + \lvert  R(x)-R(y)\rvert)^{s}}
\\&\lesssim
\intav_{B(0,r_1)}\bigg(   \omega_0(x,y) \intav_{B(x+z,3\lvert x-y\rvert
+2\lvert R(x)-R(y)\rvert)} 
\frac{ \chi_G(x+z)\chi_G(a)\lvert f(x+z)-f(a)\rvert}{\lvert x+z-a\rvert^{s}}\,da\bigg)dz\\
&\lesssim\intav_{B(0,r_1)}\bigg(  \omega_0(x,y) \intav_{B(y+z,4\lvert x-y\rvert
+2\lvert R(x)-R(y)\rvert)}
 S(f)(x+z,a)\,da\,\bigg)\,dz\,.
\end{align*}
Since $\omega_0(x,y)=\omega_0(x+z,y+z)$, we may apply the maximal operators defined in \S \ref{s.notation} in order to find that
\begin{equation}\label{e.e_1}
\begin{split}
&\frac{E_1 \omega_0(x,y)}{
(\lvert x-y\rvert + \lvert  R(x)-R(y)\rvert)^{s}}\\&\lesssim \intav_{B(0,r_1)} \omega_0(x+z,y+z)M_{01}(Sf)(x+z,y+z)\,dz\leq M_{11}(\omega_0 M_{01}(Sf))(x,y)\,.
\end{split}
\end{equation}

{\bf An estimate for $E_2$.} 
We use the inclusion $y+\frac{r_2}{r_1}z\in G$ for all  $z\in B(0,r_1)$ and then apply 
the first equivalence in \eqref{e.equivalence} to obtain
\begin{align*}
E_2 &= \intav_{B(0,r_1)}\bigg(\intav_{B(y+\frac{r_2}{r_1}z,\lvert x-y\rvert+\lvert R(x)-R(y)\rvert)\cap G} 
\chi_G(y+\frac{r_2}{r_1}z)\chi_G(a)\lvert f(y+\frac{r_2}{r_1}z)-f(a)\rvert \,da\,\bigg)\,dz\\
&\lesssim \intav_{B(0,r_1)}\bigg(\intav_{B(y+\frac{r_2}{r_1}z, \lvert x-y\rvert+\lvert R(x)-R(y)\rvert)} 
\chi_G(y+\frac{r_2}{r_1}z)\chi_G(a)\lvert f(y+\frac{r_2}{r_1}z)-f(a)\rvert \,da\,\bigg)\,dz\,.
\end{align*}
Hence, a change of variables yields
\begin{align*}
&\frac{E_2 \omega_0(x,y)}{(\lvert x-y\rvert + \lvert  R(x)-R(y)\rvert)^{s}}
\\
&\lesssim \intav_{B(0,r_2)} \omega_0(x,y)\bigg(\intav_{B(y+z,\lvert x-y\rvert+\lvert R(x)-R(y)\rvert)}\frac{\chi_G(y+z)\chi_G(a)\lvert f(y+z)-f(a)\rvert}{\lvert y+z-a\rvert^{s}}\,d
a\,\bigg)\,dz\\
&\lesssim \intav_{B(0,r_2)} \omega_0(x,y)\bigg(\intav_{B(x+z,2\lvert x-y\rvert+\lvert R(x)-R(y)\rvert)}  
S(f)(y+z,a)\,da\,\bigg)\,dz\,.
\end{align*}
Let us observe that $\omega_0(x,y)=\omega_1(y+z,x+z)$. Hence,
by applying operators $M_{01}$ and $M_{11}$ from \S \ref{s.notation}, we can proceed as follows
\begin{equation}\label{e.e_2}
\begin{split}
&\frac{E_2 \omega_0(x,y)}{(\lvert x-y\rvert + \lvert  R(x)-R(y)\rvert)^{s}}
\\&\lesssim\intav_{B(0,r_2)} \omega_1(y+z,x+z) M_{01}(Sf)(y+z,x+z)\,dz\leq M_{11}(\omega_1 M_{01}(Sf))(y,x)\,.
\end{split}
\end{equation}

By combining the estimates \eqref{e.e_1} and \eqref{e.e_2}, we obtain  that
\begin{align*}
\omega_0(x,y)S_R(M_R(f))(x,y)&\le\frac{(E_1+E_2)\omega_0(x,y)}{(\lvert x-y\rvert + \lvert  R(x)-R(y)\rvert)^{s}}  \\
&\lesssim M_{11}(\omega_0 M_{01}( S f))(x,y)+M_{11}(\omega_1 M_{01}(S f))(y,x)\,,
\end{align*}
where the implied constant depends only on $n$. As a consequence, inequality \eqref{e.dom} follows also in  the second case 
$r(x)>\lvert x-y\rvert + \lvert R(x)-R(y)\rvert$ that is now under
our consideration.
\end{proof}

We are finally ready to prove Theorem \ref{t.m.bounded}.
\begin{proof}[Proof of Theorem \ref{t.m.bounded}]
Let  $f\in L^p(G)$.
We may assume that the double integral on the right hand side of  \eqref{e.max_bdd} is finite, and therefore
$\omega_m S f \in L^p(\R^{2n})$ if $m\in \{0,1\}$. 
Observe that $\omega_1(x,y)^p=\widetilde{\omega}(x-y)$, where $\widetilde{\omega}(z)=\omega(- z)$ is also an $A_p$ weight such that $[\widetilde{\omega}]_{A_p}=[\omega]_{A_p}$.
Hence, inequality \eqref{e.max_bdd} is a consequence of Proposition \ref{p.maximal}, the boundedness
of  operators $M_{ij}$ on $L^p(\R^{2n})$, and Proposition \ref{t.Mapp} applied with
the two $A_p$ weights $\omega$ and $\widetilde{\omega}$.
\end{proof}

\section{Powers of distance as weights}\label{holder}

In this section we  apply Theorem \ref{t.m.bounded} with
$\omega=\dist(\cdot,E)^{\varepsilon-n}$,
where the set $E\subset\R^n$ and $\varepsilon>0$ are chosen such that $\omega$ is an $A_p$ weight in $\R^n$; we refer to Theorem \ref{c.dist}.
The important special  case $E=\{0\}$ and $\omega=\lvert \cdot \rvert^{\varepsilon-n}$ yields
 boundedness 
results for the operators $f\mapsto M_R(f)$ between  fractional Sobolev
spaces. 
These results with Lipschitz and
 H\"older functions $R$ are formulated in Corollaries 
 \ref{c.lipschitz} and \ref{c.holder}, respectively.
The sharpness  of  Corollary \ref{c.holder}
in terms of the H\"older exponent is considered
 in Lemma \ref{e.conv}. Furthermore, this lemma shows that Theorem \ref{t.m.bounded}, i.e., our main result, is essentially
 sharp in its generality.

 The following proposition can be found in  \cite{ihnatsyeva4}
(see also \cite{LV} and \cite[Lemma 2.2]{MR1118940}).
The straightforward proof relies on a 
characterization of the Assouad dimension in terms of the so-called Aikawa dimension, we refer to \cite{LT}.

\begin{proposition}\label{t.a_infty}
Let $E\subset\R^n$ be a (non-empty) closed set and let
 $\omega=\dist(\cdot,E)^{\varepsilon-n}$ for a fixed $\varepsilon>0$.
Then the following statements are true.
\begin{itemize}
\item[(A)] If $\udima(E)<\varepsilon\le n$, then $\omega$ is an $A_p$ weight in $\R^n$ for all $1 < p <\infty$.
\item[(B)] If $\varepsilon>n$ and $1<p<\infty$ are such that
\[
\udima(E)< n - \frac{\varepsilon-n}{p-1}\,, 
\]
then $\omega$ is an $A_p$ weight in $\R^n$.
\end{itemize}
\end{proposition}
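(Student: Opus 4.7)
The plan is to verify the $A_p$ condition directly at the level of an arbitrary cube $Q\subset\R^n$: writing $w(y)=\dist(y,E)^{\varepsilon-n}$ and $\sigma(y)=w(y)^{-1/(p-1)}=\dist(y,E)^{(n-\varepsilon)/(p-1)}$, one needs a uniform upper bound on
\[
\Bigl(\intav_Q w(y)\,dy\Bigr)\Bigl(\intav_Q\sigma(y)\,dy\Bigr)^{p-1}.
\]
The central analytic input, supplied by the Lehrb\"ack--Tuominen identification of the Assouad and Aikawa dimensions cited as LT in the paper, says that whenever $\lambda>\udima(E)$ there is a constant $C_\lambda$ with
\[
\int_{B(x_0,r)}\dist(y,E)^{\lambda-n}\,dy\le C_\lambda r^\lambda
\]
for every $x_0\in E$ and every $r>0$.

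The argument would split according to the location of $Q$ relative to $E$, with $\operatorname{diam}(Q)$ as the natural scale. If $\dist(c_Q,E)\ge 2\operatorname{diam}(Q)$, then $\dist(y,E)\simeq\dist(c_Q,E)$ for every $y\in Q$, so both averaged factors collapse to the corresponding power of $\dist(c_Q,E)$; the exponents $\varepsilon-n$ and (after raising the second factor to the power $p-1$) $n-\varepsilon$ are negatives of one another, so the product is uniformly comparable to $1$, independently of whether $\varepsilon<n$ or $\varepsilon>n$. Otherwise $\dist(c_Q,E)<2\operatorname{diam}(Q)$, and I would select a point $x_0\in E$ with $|c_Q-x_0|\simeq\dist(c_Q,E)$ together with a radius $r\simeq\operatorname{diam}(Q)$ so that $Q\subset B(x_0,r)$ and $|Q|\simeq r^n$.

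For part (A) the exponent $(n-\varepsilon)/(p-1)$ is non-negative, so $\sigma$ is trivially bounded by $r^{(n-\varepsilon)/(p-1)}$ on $Q$, while the singular factor $w$ is controlled by the Aikawa inequality with $\lambda=\varepsilon>\udima(E)$, yielding $\int_Q w\le Cr^\varepsilon$. Dividing by $|Q|\simeq r^n$ and combining the two bounds, the product once again telescopes to a constant depending only on $n$, $p$ and the gap $\varepsilon-\udima(E)$. Part (B) runs symmetrically: now $w$ is trivially bounded by $r^{\varepsilon-n}$ on $Q$ (since $\varepsilon>n$), while $\sigma=\dist(\cdot,E)^{\mu-n}$ with $\mu=n-(\varepsilon-n)/(p-1)$, and the hypothesis $\udima(E)<\mu$ is exactly what is needed to apply the Aikawa inequality with $\lambda=\mu$. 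I expect the main delicacy to be extracting the Aikawa estimate centered at a point of $E$ from the Lehrb\"ack--Tuominen equivalence and ensuring the constants depend only on $n$, $p$, and the gap between $\udima(E)$ and the critical exponent; once that bookkeeping is established, both parts (A) and (B) reduce to the same elementary exponent arithmetic.
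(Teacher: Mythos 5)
Your proposal is correct and follows exactly the route the paper indicates: the paper does not supply a proof for this proposition, but points to the Lehrb\"ack--Tuominen identification of the Assouad and Aikawa dimensions as the key tool, which is precisely what you invoke. The two-case decomposition by $\dist(c_Q,E)$ versus $\operatorname{diam}(Q)$, together with the observation that the singular factor is always handled by the Aikawa estimate while the other factor is trivially bounded (since its exponent has the favourable sign), is the standard argument and carries through without difficulty; the only minor over-caution is your worry about ``extracting'' the Aikawa estimate from LT --- the Aikawa dimension is defined exactly by that integral condition, so once $\lambda>\udima(E)=\dim_{AI}(E)$ the bound is immediate (and monotone in $\lambda$, as one checks from $\dist(\cdot,E)\le r$ on $B(x_0,r)$).
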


The following result illustrates the flexibility of our main result, and it is an
immediate consequence of
Theorem \ref{t.m.bounded} and Proposition \ref{t.a_infty}.

\begin{theorem}\label{c.dist}
Assume that  $\emptyset\not=G\subset \R^n$ is an open set,  
$0\le  s \le  2$, and $1<p < \infty$.
Let $\varepsilon>0$ and $E\not=\emptyset$ be a closed set in $\R^n$  such that
\[
\udima(E) < \varepsilon < n + (n-\udima(E))(p-1)\,.
\] 
Fix a measurable function $R:G\to \R$ satisfying inequality $0\le R(x)\le \dist(x,\partial G)$ for every $x\in G$.
Then there exists a constant
$C=C(n,p,\varepsilon,E)>0$ such that inequality
\begin{equation*}
\begin{split}
\int_G\int_{G}  \frac{\lvert M_R(f)(x)-M_R(f)(y)\rvert^p}{(\lvert x-y\rvert+\lvert R(x)-R(y)\rvert)^{sp}}\,&\,\frac{dy\,dx}{\mathrm{dist}(x-y,E)^{n-\varepsilon}}
\\&\le C\int_G\int_{G}  \frac{ \lvert f(x)-f(y)\rvert^p}{\lvert x-y\rvert^{sp}}\,\frac{dy\,dx}{\mathrm{dist}(x-y,E)^{n-\varepsilon}}
\end{split}
\end{equation*}
holds for every $f\in L^p(G)$.
\end{theorem}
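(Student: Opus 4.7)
The plan is to apply Theorem \ref{t.m.bounded} with the specific weight $\omega(z)=\dist(z,E)^{\varepsilon-n}$, $z\in\R^n$. Under this choice, the factor $\omega(x-y)$ becomes $\dist(x-y,E)^{-(n-\varepsilon)}$, so both sides of the target inequality reduce term-by-term to both sides of \eqref{e.max_bdd}. Consequently, once I know that this particular $\omega$ lies in $A_p$ and that the resulting $A_p$ constant depends only on $n$, $p$, $\varepsilon$, and $E$, the conclusion follows immediately from Theorem \ref{t.m.bounded}.

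To verify the $A_p$ condition I would invoke Proposition \ref{t.a_infty} and split on the sign of $\varepsilon-n$. If $\udima(E)<\varepsilon\le n$, then part (A) applies directly and $\omega$ is an $A_p$ weight for every $1<p<\infty$. Otherwise $\varepsilon>n$, and the upper bound in the hypothesis rearranges as
\[
\varepsilon<n+(n-\udima(E))(p-1)\ \Longleftrightarrow\ \udima(E)<n-\frac{\varepsilon-n}{p-1},
\]
which is exactly the assumption of part (B). One small sanity check I would include: the lower bound $\udima(E)<\varepsilon$ together with $E\subset\R^n$ forces $\udima(E)<n$, so the right-hand side of the displayed inequality above is genuinely positive in the second case.

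With $\omega\in A_p$ in hand, Theorem \ref{t.m.bounded} yields a constant $C$ that depends on $n$, $p$, and $[\omega]_{A_p}$. Since $[\omega]_{A_p}$ is determined by $n$, $p$, $\varepsilon$, and the set $E$, I can relabel this as $C(n,p,\varepsilon,E)$, and substituting the explicit form of $\omega$ into \eqref{e.max_bdd} delivers the claimed inequality for every $f\in L^p(G)$. There is no real obstacle here — the arithmetic check that the permitted range of $\varepsilon$ is the union of the two regimes covered by Proposition \ref{t.a_infty} is the only thing that needs care.
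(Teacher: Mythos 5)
Your proposal matches the paper's argument exactly: the paper states that Theorem \ref{c.dist} is an immediate consequence of Theorem \ref{t.m.bounded} and Proposition \ref{t.a_infty}, using the weight $\omega=\dist(\cdot,E)^{\varepsilon-n}$, and your case split on $\varepsilon\le n$ versus $\varepsilon>n$ is precisely the way Proposition \ref{t.a_infty} is meant to be invoked. The sanity check that $\udima(E)<n$ in the second regime is a nice touch but otherwise the argument is the same.
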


Next we turn to an important special case, where $E=\{0\}$ and $\omega=\dist(\cdot,E)^{\varepsilon-n}=\lvert \cdot \rvert^{\varepsilon-n}$.
The following convenient result is a 
reformulation of
 Theorem \ref{t.m.bounded_app_I}; for
 the definition of the seminorm appearing in the right-hand side
 of \eqref{e.gen}, we refer to Example \ref{s.usual_frac}.

\begin{proposition}\label{t.m.bounded_app}
Let $\emptyset\not=G\subset \R^n$ be an open set, $0<\varepsilon,s < 1$ and $1<p < \infty$.
Fix a measurable function $R:G\to \R$ satisfying inequality $0\le R(x)\le \dist(x,\partial G)$ for every $x\in G$.
Then there exists a constant
$C=C(n,p,\varepsilon)>0$ such that inequality
\begin{equation}\label{e.gen}
\int_G\int_{G}\frac{\lvert M_R(f)(x)-M_R(f)(y)\rvert^p}{(\lvert x-y\rvert+\lvert R(x)-R(y)\rvert)^{\varepsilon+sp}}\,\frac{dy\,dx}{\lvert x-y\rvert^{n-\varepsilon}}
\le C \lvert f\rvert_{W^{s,p}(G)}^p
\end{equation}
holds for every $f\in L^p(G)$. 
\end{proposition}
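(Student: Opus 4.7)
The plan is to deduce Proposition \ref{t.m.bounded_app} as an immediate specialization of the main theorem, Theorem \ref{t.m.bounded}, by choosing a well-adapted weight and shifting the smoothness parameter appropriately. First I would set $\omega(z) := \lvert z\rvert^{\varepsilon-n}$. By Example \ref{s.usual_frac}, $\omega$ is an $A_p$ weight provided $0<\varepsilon<np$; since we are given $0<\varepsilon<1$ and $p>1$, this condition is certainly satisfied, and the $A_p$ constant $[\omega]_{A_p}$ depends only on $n$, $p$, and $\varepsilon$.

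Next I would apply Theorem \ref{t.m.bounded} with this $\omega$ and with the smoothness exponent $s' := s+\varepsilon/p$ in place of $s$. The hypothesis of Theorem \ref{t.m.bounded} only requires $0\le s'\le 2$, and this is immediate from $0<s<1$, $0<\varepsilon<1$, $p>1$, which gives $s' < 1 + 1/p \le 2$. The function $R$ is unchanged. The resulting inequality \eqref{e.max_bdd} reads
\[
\int_G\int_G \frac{\lvert M_R(f)(x)-M_R(f)(y)\rvert^p}{(\lvert x-y\rvert+\lvert R(x)-R(y)\rvert)^{s'p}}\,\omega(x-y)\,dy\,dx
\le C \int_G\int_G \frac{\lvert f(x)-f(y)\rvert^p}{\lvert x-y\rvert^{s'p}}\,\omega(x-y)\,dy\,dx\,,
\]
where $C=C(n,p,[\omega]_{A_p})=C(n,p,\varepsilon)$.

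Finally I would do the bookkeeping: since $s'p = sp+\varepsilon$ and $\omega(x-y) = \lvert x-y\rvert^{\varepsilon-n}$, the left-hand side is exactly the left-hand side of \eqref{e.gen}, while the right-hand side simplifies to
\[
C\int_G\int_G \frac{\lvert f(x)-f(y)\rvert^p}{\lvert x-y\rvert^{sp+\varepsilon+n-\varepsilon}}\,dy\,dx
= C\int_G\int_G \frac{\lvert f(x)-f(y)\rvert^p}{\lvert x-y\rvert^{n+sp}}\,dy\,dx
= C\lvert f\rvert_{W^{s,p}(G)}^p\,,
\]
using the identification of $\lvert\cdot\rvert_{W^{s,p}(G)}$ recorded in Example \ref{s.usual_frac}. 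This yields \eqref{e.gen}.

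There is really no obstacle here: the only things to verify are that the exponent arithmetic $s'p = sp+\varepsilon$ produces the claimed denominators in \eqref{e.gen}, that $s'\in[0,2]$ is within the admissible range of Theorem \ref{t.m.bounded}, and that $\lvert\cdot\rvert^{\varepsilon-n}\in A_p$ for the given $\varepsilon$; all three are immediate. The conceptual work has already been done in Theorem \ref{t.m.bounded} and in the observation of Example \ref{s.usual_frac} that $W^{s,p}(G)$ is a special case of $W^{s+\varepsilon/p,p,\omega}(G)$.
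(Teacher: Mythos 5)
Your proposal is correct and follows exactly the route the paper takes: specialize Theorem \ref{t.m.bounded} to the $A_p$ weight $\omega=\lvert\cdot\rvert^{\varepsilon-n}$ with shifted smoothness $s'=s+\varepsilon/p$, observe $s'<2$, and identify $\lvert\cdot\rvert_{W^{s',p,\omega}(G)}$ with $\lvert\cdot\rvert_{W^{s,p}(G)}$ as in Example \ref{s.usual_frac}. The only cosmetic difference is that you spell out the exponent arithmetic, whereas the paper leaves it implicit.
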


\begin{proof}
Since $0<\varepsilon < 1$, we find that the function $\lvert x\rvert^{\varepsilon-n}$
is an $A_p$ weight; see \cite[p.~236]{MR869816} or
Proposition \ref{t.a_infty}(A).
Moreover, the $A_p$ constant of this weight depends only on $n$, $p$ and $\varepsilon$.
Observe also that $\varepsilon/p+s < 2$.
Hence, inequality \eqref{e.gen} follows from Theorem \ref{t.m.bounded}.
\end{proof}

\begin{remark}
Observe that Proposition \ref{t.m.bounded_app} is related to the 
 case $E=\{0\}$ of Theorem \ref{c.dist}. Indeed, we have
that $\udima(\{0\})=0$.
\end{remark}

 The following boundedness result, which applies for Lipschitz $R$-functions,
 is a corollary of Proposition~\ref{t.m.bounded_app}.
Let us fix $L\ge 0$ and recall that  $R$ is an $L$-Lipschitz function on $G$ if 
 $\lvert R(x)-R(y)\rvert\le L\lvert x-y\rvert$ whenever $x,y\in G$.

\begin{corollary}\label{c.lipschitz}
Let $\emptyset\not=G\subset \R^n$ be an open set,  $0<\varepsilon,s < 1$ and $1<p < \infty$.
Fix  $L\ge 0$ and an $L$-Lipschitz function $R:G\to \R$ satisfying inequality $0\le R(x)\le \dist(x,\partial G)$ for every $x\in G$.
Then there exists a constant
$C=C(n,p,\varepsilon)>0$ such that inequality
\[
\lvert M_R(f)\rvert_{W^{s,p}(G)}\le C(1+L)^{\varepsilon/p+s} \lvert f\rvert_{W^{s,p}(G)}
\]
holds for every function $f\in L^p(G)$.
\end{corollary}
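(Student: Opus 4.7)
The plan is to derive Corollary \ref{c.lipschitz} as a direct consequence of Proposition \ref{t.m.bounded_app} by estimating the $R$-modified denominator on the left-hand side of \eqref{e.gen} using the Lipschitz condition. Since $R$ is $L$-Lipschitz, for every $x,y\in G$ we have the pointwise bound
\[
\lvert x-y\rvert+\lvert R(x)-R(y)\rvert\le (1+L)\lvert x-y\rvert\,,
\]
and therefore, raising to the power $\varepsilon+sp$ and inverting,
\[
\frac{1}{(\lvert x-y\rvert+\lvert R(x)-R(y)\rvert)^{\varepsilon+sp}}\ge \frac{1}{(1+L)^{\varepsilon+sp}}\cdot\frac{1}{\lvert x-y\rvert^{\varepsilon+sp}}\,.
\]

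Next I would combine this with the factor $\lvert x-y\rvert^{\varepsilon-n}$ that appears in \eqref{e.gen}: the key identity is
\[
\lvert x-y\rvert^{\varepsilon+sp}\cdot \lvert x-y\rvert^{n-\varepsilon}=\lvert x-y\rvert^{n+sp}\,,
\]
so multiplying the previous inequality by $\lvert x-y\rvert^{\varepsilon-n}$ yields, for the full integrand on the left-hand side of \eqref{e.gen},
\[
\frac{1}{(\lvert x-y\rvert+\lvert R(x)-R(y)\rvert)^{\varepsilon+sp}\,\lvert x-y\rvert^{n-\varepsilon}}\ge \frac{1}{(1+L)^{\varepsilon+sp}\,\lvert x-y\rvert^{n+sp}}\,.
\]
Integrating $\lvert M_R(f)(x)-M_R(f)(y)\rvert^p$ against both sides and recalling the definition of $\lvert\cdot\rvert_{W^{s,p}(G)}$ recorded in Example \ref{s.usual_frac} gives
\[
\frac{1}{(1+L)^{\varepsilon+sp}}\,\lvert M_R(f)\rvert_{W^{s,p}(G)}^p\le \int_G\int_G\frac{\lvert M_R(f)(x)-M_R(f)(y)\rvert^p}{(\lvert x-y\rvert+\lvert R(x)-R(y)\rvert)^{\varepsilon+sp}}\,\frac{dy\,dx}{\lvert x-y\rvert^{n-\varepsilon}}\,.
\]

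Finally I would invoke Proposition \ref{t.m.bounded_app} to bound the right-hand side by $C\lvert f\rvert_{W^{s,p}(G)}^p$, with $C=C(n,p,\varepsilon)$ independent of $L$, and then take $p$-th roots. This produces the asserted constant $C^{1/p}(1+L)^{(\varepsilon+sp)/p}=C^{1/p}(1+L)^{\varepsilon/p+s}$, completing the proof. There is essentially no obstacle here: all the analytic work has been done in establishing Theorem \ref{t.m.bounded} and Proposition \ref{t.m.bounded_app}, and the Lipschitz hypothesis is used only to dominate the $R$-modified denominator by a multiple of $\lvert x-y\rvert$, thereby reducing the $R$-modified seminorm on the left of \eqref{e.gen} to the standard fractional Sobolev seminorm.
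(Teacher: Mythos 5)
Your proof is correct and is the intended derivation: the paper presents Corollary \ref{c.lipschitz} as an immediate consequence of Proposition \ref{t.m.bounded_app} without supplying details, and your argument—using the Lipschitz bound $\lvert x-y\rvert+\lvert R(x)-R(y)\rvert\le (1+L)\lvert x-y\rvert$ to dominate the standard fractional Sobolev seminorm by $(1+L)^{\varepsilon+sp}$ times the $R$-modified integral, then applying \eqref{e.gen} and taking $p$-th roots—fills in exactly those details.
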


The case of H\"older functions $R$ is
addressed in Corollary \ref{c.holder} below.
Let us recall that 
a function $R$ is $\alpha$-H\"older on $G$ (for a given $0<\alpha < 1$) if there exists $L\ge 0$ such that
inequality $\lvert R(x)-R(y)\rvert \le L\lvert x-y\rvert^\alpha$ holds whenever $x,y\in G$.

\begin{corollary}\label{c.holder}
Let $\emptyset\not=G\subset \R^n$ be a bounded open set, $0<s,\alpha < 1$ and $1<p < \infty$. 
Fix an $\alpha$-H\"older function $R$ on $G$ such that
$0\le R(x)\le \dist(x,\partial G)$ for every $x\in G$.
Then, 
if $0<\sigma<\alpha s$, there exists a constant
$C=C(\sigma,s,\alpha,n,p,L,\mathrm{diam}(G))>0$ such that inequality
\begin{equation}\label{e.ws}
\lvert M_R(f)\rvert_{W^{\sigma,p}(G)} \le  C \lvert f\rvert_{W^{s,p}(G)}
\end{equation}
holds for every $f\in L^p(G)$.
\end{corollary}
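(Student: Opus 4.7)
The plan is to reduce the claim to Proposition \ref{t.m.bounded_app} by comparing the ordinary fractional seminorm of order $\sigma$ on the left-hand side with the $R$-modified seminorm that appears on the left-hand side of inequality \eqref{e.gen}. First I would exploit both the H\"older hypothesis on $R$ and the boundedness of $G$ to obtain the pointwise bound
\[
\lvert x-y\rvert + \lvert R(x)-R(y)\rvert \le \lvert x-y\rvert + L\lvert x-y\rvert^{\alpha} \le \bigl(L+\mathrm{diam}(G)^{1-\alpha}\bigr)\lvert x-y\rvert^{\alpha}
\]
for all $x,y\in G$, where $L$ is the H\"older constant of $R$. Call this constant $L'=L+\mathrm{diam}(G)^{1-\alpha}$.

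Next I would choose an auxiliary parameter $\varepsilon\in (0,1)$ small enough to satisfy $\alpha(\varepsilon + sp) \ge \varepsilon + \sigma p$; equivalently $\varepsilon(1-\alpha)\le (\alpha s-\sigma)p$. Since by hypothesis $\alpha s - \sigma > 0$ and $1-\alpha > 0$, one may simply take
\[
\varepsilon = \min\!\left\{\tfrac{1}{2},\ \frac{(\alpha s - \sigma)p}{2(1-\alpha)}\right\}.
\]
Combining this choice with the first-paragraph bound, I then get
\[
(\lvert x-y\rvert + \lvert R(x)-R(y)\rvert)^{\varepsilon + sp}\,\lvert x-y\rvert^{n-\varepsilon}
\le (L')^{\varepsilon + sp}\,\lvert x-y\rvert^{\alpha(\varepsilon+sp)+n-\varepsilon}
\le C_1\,\lvert x-y\rvert^{n+\sigma p}
\]
whenever $x,y\in G$ with $x\neq y$, where $C_1=C_1(\sigma,s,\alpha,p,L,\mathrm{diam}(G))$ absorbs a factor $\mathrm{diam}(G)^{\alpha(\varepsilon+sp)-\varepsilon-\sigma p}$ coming from the fact that the exponent is non-negative and $\lvert x-y\rvert\le\mathrm{diam}(G)$.

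Taking reciprocals and integrating the pointwise inequality against $\lvert M_R(f)(x)-M_R(f)(y)\rvert^p$ over $G\times G$ yields
\[
\lvert M_R(f)\rvert_{W^{\sigma,p}(G)}^p \le C_1 \int_G\int_G \frac{\lvert M_R(f)(x)-M_R(f)(y)\rvert^p}{(\lvert x-y\rvert+\lvert R(x)-R(y)\rvert)^{\varepsilon + sp}}\,\frac{dy\,dx}{\lvert x-y\rvert^{n-\varepsilon}}.
\]
Since $0<\varepsilon<1$, $0<s<1$, $1<p<\infty$ and $0\le R(x)\le\dist(x,\partial G)$, Proposition \ref{t.m.bounded_app} applies directly to the right-hand side and bounds it by $C_2\,\lvert f\rvert_{W^{s,p}(G)}^p$ with a constant $C_2=C_2(n,p,\varepsilon)$, completing the proof of \eqref{e.ws}. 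The main (and essentially only) obstacle is the book-keeping of the admissible ranges of $\varepsilon$; the strict inequality $\sigma<\alpha s$ is precisely what makes the choice of $\varepsilon\in(0,1)$ possible, and any attempt to take $\sigma=\alpha s$ forces $\varepsilon=0$, which is outside the range of Proposition \ref{t.m.bounded_app}. This is consistent with the sharpness discussion surrounding Lemma \ref{e.conv}.
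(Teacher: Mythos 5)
Your proof is correct and is precisely the "straightforward but tedious reduction to Proposition \ref{t.m.bounded_app}" that the paper alludes to but omits: the Hölder bound converts the $R$-modified kernel into a power of $\lvert x-y\rvert$, boundedness of $G$ supplies the extra factor, and the constraint $\alpha(\varepsilon+sp)\ge\varepsilon+\sigma p$ with $\varepsilon\in(0,1)$ is exactly what the hypothesis $\sigma<\alpha s$ makes possible. The bookkeeping of exponents and of the constant's dependence on $(\sigma,s,\alpha,n,p,L,\mathrm{diam}(G))$ is all in order.
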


We omit the  proof of  Corollary \ref{c.holder} that 
is  quite a straightforward 
but tedious reduction to
Proposition \ref{t.m.bounded_app}; it is worthwhile to emphasize
that the open set $G$ is assumed
to be bounded. Hence, the case when $\sigma$ is close to $\alpha s$ is 
a difficult one to establish. 

It is unknown to the authors, 
whether inequality \eqref{e.ws} holds also when $\sigma=\alpha s$ is the endpoint.
However, the following Lemma \ref{e.conv} shows that  Corollary \ref{c.holder} is essentially sharp, in that
we cannot allow $\sigma>\alpha s$ in general.


\begin{lemma}\label{e.conv}
 Fix $\alpha = 1/M$ for any number
 $M\in \{2,3,\ldots\}$. Let $0<s<1$ and $1<p<\infty$ be such that 
$\alpha sp\ge 1$. Then there exists a bounded  open set $G$ in $\R$ and an $\alpha$-H\"older function $R:G\to \R$
which satisfies the inequality $0\le R(x)\le \dist(x,\partial G)$ whenever  $x\in G$
and which has the following property:
for any given $\sigma\in (\alpha s,1)$
there does not exist a constant $C>0$ such that 
\[\lvert M_R(f)\rvert_{W^{\sigma,p}(G)}\le C\lvert f\rvert_{W^{s,p}(G)}\]
for all functions $f\in L^p(G)$.
\end{lemma}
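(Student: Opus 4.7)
The plan. The goal is to exhibit an explicit counterexample: construct $G$ and $R$ satisfying all the hypotheses, and for every $\sigma\in(\alpha s,1)$ produce a sequence $(f_n)\subset L^p(G)$ with $\lvert f_n\rvert_{W^{s,p}(G)}$ uniformly bounded while $\lvert M_R(f_n)\rvert_{W^{\sigma,p}(G)}\to\infty$.

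\textbf{Construction of $G$ and $R$.} I would fix a bounded interval $G\subset\R$ containing an interior point $x_0$ with $\dist(x_0,\partial G)$ large, and define, for some small $r_0>0$,
\[
R(x)=\begin{cases} r_0 & x\le x_0,\\ \max\bigl(0,\,r_0-(x-x_0)^\alpha\bigr) & x>x_0.\end{cases}
\]
Since $t\mapsto t^\alpha$ is $\alpha$-H\"older on $[0,\infty)$ and subadditive (as $\alpha\le 1$), one checks that $R$ is $\alpha$-H\"older with constant $1$; choosing $r_0$ small enough relative to $\dist(\cdot,\partial G)$ on the support region ensures $0\le R(x)\le\dist(x,\partial G)$. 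The crucial feature is that $R$ has a genuine H\"older (non-Lipschitz) singularity at $x_0$ from the right, realizing the worst possible oscillation allowed by the H\"older condition.

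\textbf{Test functions.} For $h_n=2^{-n}$, I would place a smooth bump $f_n$ of scale $h_n$ near the far edge of the maximal ball, say with $\mathrm{supp}(f_n)\subset[x_0+r_0-h_n,x_0+r_0]$, and normalize it so that $\lvert f_n\rvert_{W^{s,p}(G)}=1$. The scaling calculation gives $\|f_n\|_\infty\simeq h_n^{s-1/p}$ and $\int f_n\simeq h_n^{s+1-1/p}$, where $s-1/p\ge 0$ is guaranteed by $\alpha sp\ge1$ together with $\alpha\le 1/2$; in this regime $sp\ge M\ge 2>1$, so fractional Hardy in dimension one can be invoked to pass between $|f_n|_{W^{s,p}}$ and integrals weighted by $\dist(\cdot,\partial G)^{-sp}$.

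\textbf{Behaviour of $M_R(f_n)$ and lower bound.} The key dichotomy is geometric. For $x\le x_0$ the full ball $B(x,r_0)$ is available and captures the bump, so $M_R(f_n)(x)\simeq h_n^{s+1-1/p}$. For $x=x_0+\varepsilon$ the allowed radius $R(x)=r_0-\varepsilon^\alpha$ shrinks at the H\"older rate; the ball $B(x,R(x))$ ceases to reach $\mathrm{supp}(f_n)$ once $\varepsilon^\alpha\gtrsim h_n$, i.e., once $\varepsilon\gtrsim h_n^{1/\alpha}$, and then $M_R(f_n)(x)=0$. Thus $M_R(f_n)$ transitions from amplitude $\simeq h_n^{s+1-1/p}$ to $0$ across an interval of width $\simeq h_n^{1/\alpha}$ at $x_0$. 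Integrating over pairs straddling the transition yields a lower bound
\[
\lvert M_R(f_n)\rvert_{W^{\sigma,p}(G)}^p\gtrsim h_n^{(s+1-1/p)p}\cdot h_n^{(1-\sigma p)/\alpha},
\]
from which the desired divergence for $\sigma>\alpha s$ will follow after a further refinement of the test functions (see below).

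\textbf{Main obstacle.} The single-bump computation above produces blowup for $\sigma>\alpha s+\alpha(1-1/p)+1/p$ rather than the sharp $\sigma>\alpha s$. Closing this gap is the heart of the proof. I would replace the single bump $f_n$ by a Hardy-extremising atomic superposition of bumps supported at the nested scales $\{h_n^{k/(\alpha M)}:k=1,\ldots,M\}$, chosen so that the full $L^p$-norm of $f_n/\dist(\cdot,\partial G)^s$ (the fractional Hardy output) is saturated across scales. The integer hypothesis $\alpha=1/M$ with $\alpha sp\ge 1$ is used here to cleanly reparametrise the radial scales by the Lipschitz map $t\mapsto t^M$ so that the multiscale bookkeeping closes up; on the maximal side, the analogous scales in $M_R(f_n)$ accumulate the jumps of size $\simeq h_n^{1/\alpha}$, and summing the resulting $W^{\sigma,p}$-contributions recovers $|M_R(f_n)|_{W^{\sigma,p}}^p\gtrsim h_n^{(s-\sigma/\alpha)p}$, which diverges precisely for $\sigma>\alpha s$. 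Pinning down this multiscale construction and the sharp exponent arithmetic---rather than the relatively soft scale analysis of a single bump---is the technical crux.
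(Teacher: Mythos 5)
Your construction is genuinely different from the paper's, and you correctly identify the obstacle: with a single $\alpha$-H\"older singularity in $R$ at one point and a single bump $f_n$, the lower bound $\lvert M_R(f_n)\rvert^p_{W^{\sigma,p}}\gtrsim h_n^{(s+1-1/p)p+(1-\sigma p)/\alpha}$ diverges only for $\sigma>\alpha s+\alpha(1-1/p)+1/p$, a threshold that can equal or exceed $1$. That diagnosis is right, but the proposed repair does not close the gap. Superposing bumps at nested scales near a single transition zone produces one staircase in $M_R(f_n)$ rather than disjoint bumps; the contribution of each scale $h$ to the target seminorm versus the source seminorm carries the same critical exponent in $h$ as in your one-bump computation, so the ratio of the two does not improve. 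The asserted final estimate $\lvert M_R(f_n)\rvert^p_{W^{\sigma,p}}\gtrsim h_n^{(s-\sigma/\alpha)p}$ is not derived, and I do not see how to extract it from a single-point singularity in $R$.

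The paper's mechanism is structurally different and supplies exactly the missing multiplicity, by modifying $R$ rather than the test function. It sets $R=2^{2\alpha+1}\dist(\cdot,E)^{\alpha}$ with $E=G\setminus\bigcup_{N}\bigcup_{j}I_{N,j}$, where for each $N$ there are $2^{(M-1)N}$ disjoint intervals $I_{N,j}$ of length $2^{-MN}$ packed near $x=2^{-N}$. A \emph{single} bump $\psi_N(x)=\psi(2^Nx)$ has $\lvert\psi_N\rvert^p_{W^{s,p}}\lesssim 2^{N(sp-1)}$, yet $M_R(\psi_N)\ge 1$ on the middle half of \emph{every} $I_{N,j}$ (there $R(x)\ge 2^{-N+1}$, so one admissible ball captures all of $\mathrm{supp}\,\psi_N$), while near $\partial I_{N,j}\subset E$ the radius $R$, and hence $M_R(\psi_N)$, are forced to $0$. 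Applying the fractional $(\sigma,p)$-Hardy inequality on each $I_{N,j}$ (here $\sigma p>1$ follows from $\alpha sp\ge 1$ and $\sigma>\alpha s$) and summing over $j$ gives $\lvert M_R(\psi_N)\rvert^p_{W^{\sigma,p}}\gtrsim 2^{(M-1)N}(2^{-MN})^{1-\sigma p}=2^{N(\sigma pM-1)}$; dividing by $\lvert\psi_N\rvert^p_{W^{s,p}}$ yields $2^{Np(\sigma/\alpha-s)}\to\infty$ exactly when $\sigma>\alpha s$. The multiplicity factor $2^{(M-1)N}$ is what your single singularity cannot produce; to salvage your plan you would need $R$ to vanish on a self-similar set of this kind, at which point you would essentially be rebuilding the paper's example.
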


\begin{proof}
Let us fix $\alpha s < \sigma < 1$ and
first sketch the proof that relies on 
a fractional $(\sigma,p)$-Hardy inequality: there exists a constant  $C(p,\sigma)>0$ such that
\begin{equation}\label{e.hardy_1}
\int_I\int_I \frac{\lvert f(x)-f(y)\rvert^p}{\lvert x-y\rvert^{1+\sigma p}}\,dy\,dx\ge C(p,\sigma)\int_I \frac{\lvert f(x)\rvert^p}{\dist(x,\partial I)^{\sigma p}}\,dx
\end{equation}
whenever $f\in L^p(I)$ is  compactly supported in an open interval $I\subset \R$,  \cite[Corollary 2.7]{MR2659764}. Actually, this corollary is  formulated only for
$C^\infty_0(I)$ functions, and therefore approximation by such functions
is required. This can easily be done by a straightforward combination of 
Lemma \ref{l.approx} and
\cite[Lemma 4.4]{HV}; we omit the details.

We  take $G=(-8,9)$ and construct $R$ and test functions $\psi_N$ ($N\ge 1$) that are supported in an interval $I_N\subset G$  such
that $M_R(\psi_N)$ has 
a compactly supported bump in many dyadic subintervals $I_{N,j}\subset I_N$. 
Hence, the fractional $(\sigma,p)$-Hardy inequality
applies to the restriction of $M_R(\psi_N)$ in each of the subintervals.
The resulting estimates, when combined with an upper bound for $\lvert \psi_N\rvert_{W^{s,p}(G)}$, will yield that
$\lvert M_R(\psi_N)\rvert_{W^{\sigma,p}(G)}/\lvert \psi_N\rvert_{W^{s,p}(G)}\to \infty$
as $N\to \infty$.

Let us now turn to the details.
We set 
\[E=G\setminus \bigg( \bigcup_{N\in \N} \bigcup_{j=1}^{2^{(M-1)N}} I_{N,j}\bigg)\,, \]
where
\[
I_{N,j}=\big( 2^{-N} + (j-1)2^{-MN},2^{-N} + j2^{-MN}\big)\,,\qquad j=1,\ldots,2^{(M-1)N}\,.\]
Define an $\alpha$-H\"older function $R= 2^{2\alpha+1}\dist(\cdot,E)^\alpha$.
It is now straightforward to check that inequality
$0\le R(x)\le \dist(x,\partial G)$ holds for every $x\in G$.

Let $\psi\in C^\infty_0((0,1))$ be such that
$\int_{\R} \lvert \psi\rvert\,dx  = 4$.
Fix $N\in\N$  and  write $\psi_N(x)=\psi(2^N x)$ if
$x\in \R$.  Now $\psi_N$ is supported in $(0,2^{-N})$ and,
by a change of variables, we find that
\begin{equation}\label{e.psi}
\lvert \psi_N\rvert_{W^{s,p}(G)}^p \le  \lvert \psi_N\rvert_{W^{s,p}(\R)}^p=
2^{N(sp-1)}\lvert \psi\rvert_{W^{s,p}(\R)}^p\,.
\end{equation}
Next we turn to establishing a lower bound for $\lvert M_R(\psi_N)\rvert_{W^{\sigma,p}(G)}$. Let us fix $j=1,\ldots,2^{(M-1)N}$ and $x\in \widehat{I}_{N,j}=2^{-1} I_{N,j}$.
Since $(0,2^{-N})\subset B(x,2^{-N+1})$ and $2^{-N+1}\le R(x)$, we obtain  that
 \begin{equation}\label{e.max}
M_R (\psi_N)(x) \ge \vint_{B(x,2^{-N+1})} \lvert \psi_N(y)\rvert \,dy
\ge
2^{-N+2}/2^{-N+2} = 1\,,\qquad x\in \widehat{I}_{N,j}\,.
\end{equation}
Moreover, 
the restriction $M_R(\psi_N)\lvert_{I_{N,j}}\in L^p(I_{N,j})$ has a compact support in $I_{N,j}$. Hence,
by the fractional $(\sigma,p)$-Hardy inequality \eqref{e.hardy_1} followed by inequality \eqref{e.max},
\begin{align*}
\lvert M_R(\psi_N)\rvert_{W^{\sigma,p}(G)}^p &\ge \sum_{j=1}^{2^{(M-1)N}}\int_{I_{N,j}}\int_{I_{N,j}}
\frac{\lvert M_R (\psi_N)(x)-M_R (\psi_N)(y)\rvert^p}{\lvert x-y\rvert^{1+\sigma p}}\,dy\,dx\\
&\ge C(p,\sigma)\sum_{j=1}^{2^{(M-1)N}} \int_{I_{N,j}} \frac{\lvert M_R(\psi_N)(x)\rvert^p}{\mathrm{dist}(x,\partial I_{N,j})^{\sigma p}}\,dx\\
&\ge C(p,\sigma)\sum_{j=1}^{2^{(M-1)N}} \lvert \widehat{I}_{N,j}\rvert^{1-\sigma p}
\ge C(p,\sigma)\sum_{j=1}^{2^{(M-1)N}} (2^{-MN})^{1-\sigma p} = C(p,\sigma)2^{N(\sigma pM-1)}\,.
\end{align*}
By combining the estimate above with \eqref{e.psi}, we obtain that
\[
\frac{\lvert M_R(\psi_N)\rvert_{W^{\sigma,p}(G)}^p}{\lvert \psi_N\rvert_{W^{s,p}(G)}^p}\ge 
C(p,\sigma) \lvert \psi\rvert_{W^{s,p}(\R)}^{-p}2^{pN(\sigma M-s)}=C(p,\sigma)\lvert \psi\rvert_{W^{s,p}(\R)}^{-p}2^{pN(\sigma /\alpha-s)}\,.
\]
Since $\sigma>\alpha s$,  the lower bound above tends to infinity as $N\to \infty$.
\end{proof}

\section{Sobolev capacity and Lebesgue differentiation}\label{s.Lebesgue}

We apply our main result by studying the Lebesgue differentiation of a Sobolev function $f\in W^{s,p,\omega}(\R^n)$ 
outside a set of zero Sobolev capacity, see Definition \ref{e.sob_cap}.
The  outline of our treatment is based on the work \cite{Kin_Lat} of Kinnunen--Latvala, who obtain
Lebesgue point results for (first order) Sobolev functions on metric spaces.
We adapt their treatment to the
present setting when $\omega$ is an $A_p$ weight
that is subject to the condition $C^\infty_0(\R^n)\subset W^{s,p,\omega}(\R^n)$.
Hence, the key ingredients for the proof of our Theorem \ref{t.Lebesgue} are:
the density property
of continuous functions  (Lemma \ref{l.approx}) and
the boundedness of (an appropriate) maximal operator, both in $W^{s,p,\omega}(\R^n)$; the boundedness property follows from our main result, i.e., Theorem \ref{t.m.bounded}.

\begin{definition}\label{e.sob_cap}
Suppose that $0<s<1$ and $1\le p<\infty$. Let $\omega$ be a weight in $\R^n$.
For a set $E\subset \R^n$ we define its Sobolev capacity
\[
C_{s,p,\omega}(E)=\inf_{\varphi\in\mathcal{A}(E)} \lVert \varphi\rVert_{W^{s,p,\omega}(\R^n)}^p\,,
\]
where the infimum is taken over all admissible functions
\[
\mathcal{A}(E) = \{\varphi\in W^{s,p,\omega}(\R^n)\,:\,\varphi\ge 1\text{ in an open
set containing }E\}\,.
\]
If $\mathcal{A}(E)=\emptyset$, we set $C_{s,p,\omega}(E)=\infty$.
\end{definition}

The unweighted fractional Sobolev capacity (corresponding to  $W^{s,p}(\R^n)=W^{s+\varepsilon/p,p,\omega}(\R^n)$ with $\omega=\lvert \cdot \rvert^{\varepsilon-n}$)
is well known and extensively studied, see, e.g.,
 \cite{MR2839008,MR1953497,Warma}.
Let us
remark that $W^{s,p}(\R^n)$ 
coincides with the Besov space $B^{s}_{p,p}(\R^n)$ and their norms are comparable, 
if $1<p<\infty$ and $0<s<1$; we refer to 
\cite[pp. 6--7]{MR1163193}.

We prove the following result that is concerned with the Lebesgue differentiation 
and a quasicontinuous representative $f^*$ of a function 
$f$ in $W^{s,p,\omega}(\R^n)$. 

\begin{theorem}\label{t.Lebesgue}
Suppose that  $0< s <1$ and $1< p<\infty$. Let $\omega$ be an $A_p$ weight in $\R^n$
such that $C^\infty_0(\R^n)$ is a subset of $W^{s,p,\omega}(\R^n)$.
Then, for every $f\in W^{s,p,\omega}(\R^n)$, there is a $G_\delta$-set $E\subset \R^n$ such
that $C_{s,p,\omega}(E)=0$ and the limit
\begin{equation}\label{e.cont}
\lim_{r\to 0_+}\intav_{B(x,r)} f(y)\,dy = f^*(x)
\end{equation}
exists for every $x\in \R^n\setminus E$. 
Moreover, for every
$\varepsilon>0$, there exists an open set $U\subset \R^n$ such
that 
$C_{s,p,\omega}(U)<\varepsilon$ and $f^*|_{\R^n\setminus U}$ is
well-defined and continuous on $\R^n\setminus U$.
\end{theorem}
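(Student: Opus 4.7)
The plan is to follow the standard scheme of Kinnunen--Latvala \cite{Kin_Lat},
adapted to the present weighted fractional setting. The two ingredients I need are already available:
density of smooth functions (Lemma \ref{l.approx}) and boundedness of the Hardy--Littlewood maximal operator on $W^{s,p,\omega}(\R^n)$.
For the latter, I first observe that the unweighted $L^p$ bound $\lVert Mf\rVert_{L^p(\R^n)}\lesssim \lVert f\rVert_{L^p(\R^n)}$ is classical.
To control the fractional part, I apply Theorem \ref{t.m.bounded} with $G=\R^n$ and $R\equiv N$ (a constant),
in which case $\lvert R(x)-R(y)\rvert=0$, and obtain
\[
\lvert M_N f\rvert_{W^{s,p,\omega}(\R^n)}\le C\lvert f\rvert_{W^{s,p,\omega}(\R^n)}
\]
with $C$ independent of $N$. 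Since $M_N f\nearrow Mf$ pointwise as $N\to\infty$, Fatou's lemma yields that $M\colon W^{s,p,\omega}(\R^n)\to W^{s,p,\omega}(\R^n)$ is bounded.

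From this I extract the capacitary weak-type inequality
\[
C_{s,p,\omega}\bigl(\{x\in\R^n\,:\,Mg(x)>\lambda\}\bigr)\le C\lambda^{-p}\lVert g\rVert_{W^{s,p,\omega}(\R^n)}^p,
\qquad g\in W^{s,p,\omega}(\R^n),\ \lambda>0.
\]
Indeed, the set on the left is open (by lower semicontinuity of $Mg$), and $Mg/\lambda\ge 1$ there,
so $Mg/\lambda$ is admissible and its $W^{s,p,\omega}$-norm is controlled by the boundedness just established.
Next, given $f\in W^{s,p,\omega}(\R^n)$, Lemma \ref{l.approx} lets me choose
$\varphi_j\in C^\infty(\R^n)\cap W^{s,p,\omega}(\R^n)$ with
$\lVert f-\varphi_j\rVert_{W^{s,p,\omega}(\R^n)}\le 4^{-j}$. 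Set
\[
E_j=\{x\in\R^n\,:\,M(f-\varphi_j)(x)>2^{-j}\},\qquad F_k=\bigcup_{j\ge k}E_j,\qquad E=\bigcap_{k\ge 1}F_k.
\]
Each $E_j$, and hence each $F_k$, is open, so $E$ is a $G_\delta$-set; by subadditivity of $C_{s,p,\omega}$ and the weak-type estimate,
$C_{s,p,\omega}(F_k)\le C\sum_{j\ge k}2^{-jp}\xrightarrow{k\to\infty}0$, so $C_{s,p,\omega}(E)=0$.

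For $x\notin E$ there exists $k=k(x)$ with $x\notin F_k$, so $M(f-\varphi_j)(x)\le 2^{-j}$ for every $j\ge k$.
For any $r>0$,
\[
\biggl|\intav_{B(x,r)}f\,dy-\varphi_j(x)\biggr|\le M(f-\varphi_j)(x)+\biggl|\intav_{B(x,r)}\varphi_j\,dy-\varphi_j(x)\biggr|,
\]
and the second term tends to $0$ as $r\to 0_+$ by continuity of $\varphi_j$; hence
$\limsup_{r\to 0_+}\bigl|\intav_{B(x,r)}f\,dy-\varphi_j(x)\bigr|\le 2^{-j}$ for each $j\ge k$.
Applied to two indices this forces $(\varphi_j(x))_{j\ge k}$ to be Cauchy with limit equal to $\lim_{r\to 0_+}\intav_{B(x,r)}f\,dy$,
and I define $f^*(x)$ to be this common value. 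Finally, for the quasicontinuity statement, given $\varepsilon>0$ pick $k$ with
$C_{s,p,\omega}(F_k)<\varepsilon$ and set $U=F_k$. On $\R^n\setminus U$ we have
$|\varphi_j-f^*|\le 2^{-j}$ for all $j\ge k$, so $\varphi_j\to f^*$ uniformly there, which makes $f^*|_{\R^n\setminus U}$ continuous.

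The only genuinely non-routine step is the bootstrap from Theorem \ref{t.m.bounded} to the global $W^{s,p,\omega}$-boundedness of $M$;
once the limit argument with $R\equiv N\to\infty$ is in place, everything else is a direct transcription of the Kinnunen--Latvala
capacity/approximation scheme into our weighted fractional framework.
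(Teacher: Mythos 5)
Your proof is correct and follows the same Kinnunen--Latvala scheme the paper uses (capacitary weak-type estimate via Theorem \ref{t.m.bounded}, density from Lemma \ref{l.approx}, Cauchy sequence at good points, quasicontinuity on the complement of a small open set). The one small variant: the paper works directly with the truncated operator $\widehat{M}f=\sup_{0<r\le 1}\vint_{B(x,r)}|f|$, i.e.\ $M_R$ with $R\equiv 1$, which is already enough because in the Lebesgue-point argument only small radii matter; you instead pass to the full Hardy--Littlewood operator $M$ by letting $R\equiv N\to\infty$ and invoking Fatou. Your Fatou step is valid (since $M_N f\to Mf$ a.e.\ and the constant in Theorem \ref{t.m.bounded} is $N$-independent), but it is an unnecessary detour---nothing downstream in the argument needs more than $\widehat{M}$. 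Everything else matches the paper's proof.
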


An analogue of Theorem \ref{t.Lebesgue} for the first order $A_p$ weighted Sobolev spaces
is known, see \cite{Kilpelainen,MR1774162}.
The unweighted case
$W^{s,p}(\R^n)=W^{s+\varepsilon/p,p,\omega}(\R^n)$ with $\omega=\lvert \cdot \rvert^{\varepsilon-n}$ of Theorem~\ref{t.Lebesgue}
is also known, we refer to \cite{MR1006450} or \cite[\S 6]{MR1411441} when $p=2$. 
The local aspects of quasicontinuity (in the unweighted case) have been studied in
 \cite[Theorem 3.7]{Warma}; however, the Lebesgue differentiation is not explicitly considered therein.

If all singletons have a positive Sobolev capacity, then 
 $f^*:\R^n\to \R$ is continuous. This is a corollary of Theorem \ref{t.Lebesgue} 
and the translation invariance of  $\lVert \cdot\rVert_{W^{s,p,\omega}(\R^n)}$.

\begin{corollary}
Let $0< s <1$ and $1< p<\infty$. Let $\omega$ be an $A_p$ weight in $\R^n$
such that $C^\infty_0(\R^n)$ is a subset of $W^{s,p,\omega}(\R^n)$
and
\[
C_{s,p,\omega}(\{x\}) >0
\]
for every $x\in\R^n$ (or, equivalently, for some $x\in\R^n$). Then every function $f\in W^{s,p,\omega}(\R^n)$ has a
continuous representative. That is, the function $f^*:\R^n\to\R$  
defined by \eqref{e.cont} is continuous and satisfies $f=f^*$ pointwise almost everywhere in $\R^n$.
\end{corollary}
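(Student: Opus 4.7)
The plan is to combine the Lebesgue differentiation and quasicontinuity conclusions of Theorem \ref{t.Lebesgue} with the translation invariance of $\lVert\cdot\rVert_{W^{s,p,\omega}(\R^n)}$ in order to force both the exceptional set for Lebesgue differentiation and the exceptional open set for quasicontinuity to be empty.

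First I would establish the parenthetical equivalence that $C_{s,p,\omega}(\{x\})>0$ for every $x\in\R^n$ is the same as $C_{s,p,\omega}(\{x\})>0$ for some $x$. Given any $\varphi\in\mathcal{A}(\{x\})$, and any $z\in\R^n$, the translated function $y\mapsto\varphi(y-z+x)$ lies in $\mathcal{A}(\{z\})$ and, by translation invariance of both the $L^p$-norm and the seminorm in \eqref{e.semi_weighted}, has the same $W^{s,p,\omega}(\R^n)$-norm as $\varphi$. Taking infima yields $C_{s,p,\omega}(\{x\})=C_{s,p,\omega}(\{z\})$ for all $x,z\in\R^n$, and I denote this common value by $c$; by hypothesis $c>0$.

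Next, given $f\in W^{s,p,\omega}(\R^n)$, Theorem \ref{t.Lebesgue} supplies a $G_\delta$-set $E$ with $C_{s,p,\omega}(E)=0$ such that the limit in \eqref{e.cont} exists and equals $f^*(x)$ whenever $x\in\R^n\setminus E$. Monotonicity of the Sobolev capacity (immediate from $\mathcal{A}(E)\subset\mathcal{A}(\{x\})$ whenever $x\in E$) gives $c=C_{s,p,\omega}(\{x\})\le C_{s,p,\omega}(E)=0$ for any $x\in E$, contradicting $c>0$. Hence $E=\emptyset$ and $f^*$ is pointwise defined on all of $\R^n$, and in particular $f=f^*$ almost everywhere by the standard Lebesgue differentiation theorem.

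Finally, to upgrade pointwise-definedness to continuity, I would apply the quasicontinuity clause of Theorem \ref{t.Lebesgue} with $\varepsilon=c/2>0$: there exists an open set $U\subset\R^n$ with $C_{s,p,\omega}(U)<c/2<c$ such that $f^*|_{\R^n\setminus U}$ is continuous. If some $x\in U$, the same monotonicity argument would force $c=C_{s,p,\omega}(\{x\})\le C_{s,p,\omega}(U)<c$, which is absurd; hence $U=\emptyset$ and $f^*$ is continuous on $\R^n$. No genuine obstacle is anticipated: the argument is a short packaging of Theorem \ref{t.Lebesgue} together with the translation invariance of the global norm, and the only point deserving of care is the verification that translation invariance of the norm indeed passes to translation invariance of the singleton capacity.
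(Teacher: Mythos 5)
Your proposal is correct and fills in exactly the details that the paper compresses into the one-sentence remark preceding the corollary, namely that it follows from Theorem \ref{t.Lebesgue} together with the translation invariance of $\lVert\cdot\rVert_{W^{s,p,\omega}(\R^n)}$. Using translation invariance to equalize all singleton capacities, then monotonicity of the capacity to force the exceptional $G_\delta$-set and the open set $U$ (with $\varepsilon$ chosen below the common singleton capacity) to be empty, is precisely the intended argument.
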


The proof of Theorem \ref{t.Lebesgue} is given in the end of this section;
first we state and prove several auxiliary results.
A key result among these is the following  capacitary weak type estimate, which
is  a counterpart of \cite[Lemma 4.4]{Kin_Lat}.
For every $f\in L^p(\R^n)$, we define
\[
\widehat{M} f(x) = \sup_{0<r\le 1}\intav_{B(x,r)}\lvert f(y)\rvert\,dy\,,\qquad x\in\R^n\,.
\]
Write $R(x)=1$ whenever $x\in\R^n$. Then
$\widehat{M}f(x)=M_{R}(f)(x)$ in the Lebesgue points $x\in\R^n$ 
of $\lvert f\rvert$, that is, almost everywhere. Moreover, we clearly have that
$\widehat{M}f\le Mf$, where $M$ is
the Hardy--Littlewood maximal operator.

\begin{lemma}\label{l.maximal_testing}
Let $0< s <1$ and $1< p<\infty$, and let $\omega$ be an $A_p$ weight in $\R^n$.
Suppose that $f\in W^{s,p,\omega}(\R^n)$. Then, for every  $\lambda>0$, we have
\[
C_{s,p,\omega}(\{x\in\R^n\,:\, \widehat{M} f(x)>\lambda\})\le C\lambda^{-p}\lVert f\rVert_{W^{s,p,\omega}(\R^n)}^p\,,
\]
where $C=C(n,p,[\omega]_{A_p})$.
\end{lemma}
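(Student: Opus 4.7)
The plan is to exhibit the function $\varphi := \widehat{M}f/\lambda$ as an admissible test function for the set $U_\lambda := \{x\in\R^n : \widehat{M}f(x)>\lambda\}$, and then estimate its $W^{s,p,\omega}(\R^n)$-norm via our main theorem.

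First I would verify that $U_\lambda$ is open and that $\varphi$ is admissible for it. For each fixed $r>0$, the function $x\mapsto \intav_{B(x,r)}\lvert f(y)\rvert\,dy$ is continuous on $\R^n$ by the continuity of translations in $L^1$, so $\widehat{M}f$ is lower semicontinuous as a supremum of continuous functions. Hence $U_\lambda$ is open, and $\varphi\ge 1$ holds pointwise on $U_\lambda$. Once $\varphi\in W^{s,p,\omega}(\R^n)$ is established, we will have $\varphi\in\mathcal{A}(U_\lambda)$ with the open set in the definition taken to be $U_\lambda$ itself.

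Next I would bound the $W^{s,p,\omega}(\R^n)$-norm of $\varphi$. Apply Theorem \ref{t.m.bounded} with $G=\R^n$ and $R\equiv 1$, noting that this $R$ satisfies $0\le R(x)\le\dist(x,\partial G)=\infty$ and that $\widehat{M}f=M_R f$ almost everywhere by the convention recorded before the lemma. Since $\lvert R(x)-R(y)\rvert\equiv 0$, the conclusion of Theorem \ref{t.m.bounded} reduces to
\[
\lvert \widehat{M}f\rvert_{W^{s,p,\omega}(\R^n)} \le C_1 \lvert f\rvert_{W^{s,p,\omega}(\R^n)},
\]
with $C_1=C_1(n,p,[\omega]_{A_p})$. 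The $L^p$-part is handled by the pointwise bound $\widehat{M}f\le Mf$ combined with the standard (unweighted) $L^p$-boundedness of the Hardy--Littlewood maximal operator for $1<p<\infty$, giving $\lVert\widehat{M}f\rVert_{L^p(\R^n)}\le C_2\lVert f\rVert_{L^p(\R^n)}$. Adding these estimates yields $\lVert\varphi\rVert_{W^{s,p,\omega}(\R^n)}^p\le C\lambda^{-p}\lVert f\rVert_{W^{s,p,\omega}(\R^n)}^p$ with $C=C(n,p,[\omega]_{A_p})$.

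Finally, plugging $\varphi$ into Definition \ref{e.sob_cap} immediately gives the desired bound
\[
C_{s,p,\omega}(U_\lambda)\le \lVert\varphi\rVert_{W^{s,p,\omega}(\R^n)}^p\le C\lambda^{-p}\lVert f\rVert_{W^{s,p,\omega}(\R^n)}^p.
\]
There is no serious obstacle here; the only subtle point is the bookkeeping that links $\widehat{M}f$ to $M_R f$ with $R\equiv 1$, which is exactly the reason why the convention above the lemma was recorded.
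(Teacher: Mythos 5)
Your proof is correct and follows essentially the same approach as the paper: identify $\widehat{M}f/\lambda$ (equivalently $M_R f/\lambda$ with $R\equiv 1$) as an admissible test function, use lower semicontinuity to show the superlevel set is open, and bound the $W^{s,p,\omega}$-norm by combining Theorem \ref{t.m.bounded} for the seminorm with $L^p$-boundedness of the Hardy--Littlewood maximal operator for the $L^p$ part. The only cosmetic difference is that the paper establishes continuity of $x\mapsto\intav_{B(x,r)}\lvert f\rvert$ via dominated convergence rather than continuity of translations, which amounts to the same thing.
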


\begin{proof}
Fix $f\in W^{s,p,\omega}(\R^n)$ and $\lambda>0$.
If $0<r\le 1$, then the function
\[
x\mapsto \intav_{B(x,r)}\lvert f(y)\rvert\,dy
\]
is continuous in $\R^n$ by the
dominated convergence theorem. As a consequence, the function
$\widehat{M}f(x)$
is lower semicontinuous in $\R^n$. Hence, 
$E_\lambda = \{x\in \R^n \,:\,\widehat{M}f(x)>\lambda\}$
is an open set in $\R^n$ and $\lambda^{-1} \widehat{M}f(x)\ge 1$ holds
if $x\in E_\lambda$.
 Theorem \ref{t.m.bounded} 
and the boundedness of the Hardy--Littlewood maximal operator in $L^p(\R^n)$
imply that 
\[
C_{s,p,\omega}(E_\lambda)\le \lVert \lambda^{-1} \widehat{M} f\rVert_{W^{s,p,\omega}(\R^n)}^p \le  C(n,p,[\omega]_{A_p}) \lambda^{-p}\lVert f\rVert_{W^{s,p\omega}(\R^n)}^p\,.
\]
This concludes the proof.
\end{proof}

The following lemma is an adaptation of  \cite[Theorem 3.2]{Kin_Mar}.

\begin{lemma}\label{l.outer_measure}
Let $0< s <1$ and $1\le p<\infty$, and let $\omega$ be a weight in $\R^n$.
Then $C_{s,p,\omega}$ is an outer measure on $\R^n$. 
\end{lemma}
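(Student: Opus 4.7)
My plan is to verify that $C_{s,p,\omega}$ satisfies the three defining properties of an outer measure: vanishing on the empty set, monotonicity, and countable subadditivity. The first two will be immediate from the definition, while the third is the heart of the matter.

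For $C_{s,p,\omega}(\emptyset)=0$, I will note that the zero function lies in $W^{s,p,\omega}(\R^n)$ and is admissible for $\emptyset$, since the empty set is itself an open set containing $\emptyset$ on which the condition $\varphi\ge 1$ holds vacuously. For monotonicity, if $E\subset F$ then any $\varphi\in\mathcal{A}(F)$ lies automatically in $\mathcal{A}(E)$ (the open set that certifies $F$ also certifies $E$), so $\mathcal{A}(F)\subset\mathcal{A}(E)$ and the infimum over the larger family is smaller.

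The main obstacle, and the only nontrivial step, is countable subadditivity. Given a sequence $\{E_i\}_{i\in\N}$ with $\sum_i C_{s,p,\omega}(E_i)<\infty$ (otherwise there is nothing to prove), I will fix $\varepsilon>0$ and choose nearly extremal admissible functions $\varphi_i\in\mathcal{A}(E_i)$ with $\|\varphi_i\|_{W^{s,p,\omega}(\R^n)}^p\le C_{s,p,\omega}(E_i)+2^{-i}\varepsilon$; passing to $|\varphi_i|$ if necessary, I may assume each $\varphi_i\ge 0$, since the pointwise inequality $\bigl||a|-|b|\bigr|\le|a-b|$ shows that neither the $L^p$ norm nor the fractional seminorm can increase under this operation. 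The natural candidate for the union is $\Phi=\sup_{i\in\N}\varphi_i$, which is automatically $\ge 1$ on the open set $\bigcup_i U_i\supset\bigcup_i E_i$ formed from the open sets $U_i$ witnessing admissibility of the $\varphi_i$.

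The crux is then to bound $\|\Phi\|_{W^{s,p,\omega}(\R^n)}^p$ by $\sum_i\|\varphi_i\|_{W^{s,p,\omega}(\R^n)}^p$. This will rest on the elementary pointwise estimate $(\sup_i\varphi_i)^p\le\sum_i\varphi_i^p$ (valid since $\varphi_i\ge 0$) and, for the seminorm, on the chain
\[
|\Phi(x)-\Phi(y)|^p\le\Bigl(\sup_i|\varphi_i(x)-\varphi_i(y)|\Bigr)^p\le\sum_i|\varphi_i(x)-\varphi_i(y)|^p,
\]
where the first inequality is the standard bound for the difference of suprema of real sequences. Tonelli's theorem will then deliver the seminorm inequality, showing in particular that $\Phi\in W^{s,p,\omega}(\R^n)$ and hence $\Phi\in\mathcal{A}(\bigcup_i E_i)$. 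Inserting this into the definition of capacity yields $C_{s,p,\omega}(\bigcup_i E_i)\le\sum_i C_{s,p,\omega}(E_i)+\varepsilon$, and letting $\varepsilon\to 0$ completes the argument.
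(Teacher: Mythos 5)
Your proof is correct and follows essentially the same route as the paper: pick nearly extremal admissible functions $\varphi_i$, pass to nonnegative versions, form $\Phi=\sup_i\varphi_i$, and derive the pointwise bounds $\Phi^p\le\sum_i\varphi_i^p$ and $\lvert\Phi(x)-\Phi(y)\rvert\le\bigl(\sum_i\lvert\varphi_i(x)-\varphi_i(y)\rvert^p\bigr)^{1/p}$ to control the $W^{s,p,\omega}$-norm via Tonelli. The only cosmetic difference is that the paper replaces $\varphi_i$ by $\min\{1,\lvert\varphi_i\rvert\}$ rather than just $\lvert\varphi_i\rvert$, and carries out an $\varepsilon/\delta$ unwinding in place of your citation of the standard supremum-difference bound; both variants lead to the identical estimate.
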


\begin{proof}
By definition, $C_{s,p,\omega}(\emptyset)=0$ and
$C_{s,p,\omega}$ is monotone, that is,
$C_{s,p,\omega}(E)\le C_{s,p,\omega}(F)$ whenever $E\subset F$.
To prove subadditivity, we suppose that $E_i$, $1=1,2,\ldots$, are subsets of $\R^n$. 
We need to establish the inequality 
\begin{equation}\label{e.sub}
C_{s,p,\omega}\bigg(\bigcup_{i=1}^\infty E_i\bigg)\le \sum_{i=1}^\infty C_{s,p,\omega}(E_i)\,.
\end{equation}
We may clearly assume that $\sum_{i=1}^\infty C_{s,p,\omega}(E_i)<\infty$. Let 
us fix $\varepsilon>0$.
For every $i=1,2,\ldots$ it holds that $\mathcal{A}(E_i)\not=\emptyset$ and, therefore,
we can choose $\varphi_i\in \mathcal{A}(E_i)$ such that
\begin{equation}\label{e.ch}
\lVert \varphi_i\rVert_{W^{s,p,\omega}(\R^n)}^p \le C_{s,p,\omega}(E_i)+\varepsilon 2^{-i}\,.
\end{equation}
By replacing each function $\varphi_i$ with $\min\{1,\lvert \varphi_i\rvert\}$ we may assume
that $0\le \varphi_i\le 1$ everywhere and $\varphi_i=1$ in an open
set containing $E_i$.

Define $\varphi = \sup_i  \varphi_i$.
Then $\varphi= 1$ in an open set containing $\cup_{i=1}^\infty E_i$.
Let us fix $x,y\in\R^n$. If $\varphi(x)\ge \varphi(y)$ then,
for every $\delta>0$, there is $j=j(\delta,x)\in\N$ such that
\begin{align*}
\lvert \varphi(x)-\varphi(y)\rvert &=\varphi(x)-\varphi(y) \le \varphi_j(x)+\delta  - \varphi(y)
\\&\le \varphi_j(x)+\delta - \varphi_j(y)
\le \delta+\lvert \varphi_j(x)-\varphi_j(y)\rvert \le \delta+\bigg(\sum_{i=1}^\infty \lvert \varphi_i(x)-\varphi_i(y)\rvert^p\bigg)^{1/p}\,.
\end{align*}
Taking $\delta\to 0$ we obtain that
\begin{equation}\label{e.bound}
\lvert \varphi(x)-\varphi(y)\rvert\le \bigg(\sum_{i=1}^\infty \lvert \varphi_i(x)-\varphi_i(y)\rvert^p\bigg)^{1/p}\,.
\end{equation}
By repeating the previous argument if
$\varphi(x)< \varphi(y)$, with the obvious changes, we find that  inequality \eqref{e.bound} holds
for all $x,y\in\R^n$. Therefore, 
\begin{equation}\label{e.sobo}
\lvert \varphi\rvert_{W^{s,p,\omega}(\R^n)}^p
\le 
\sum_{i=1}^\infty\lvert \varphi_i\rvert_{W^{s,p,\omega}(\R^n)}^p\,.
\end{equation}
Clearly, we also have
\begin{equation}\label{e.lp}
\lVert \varphi\rVert_{L^p(\R^n)}^p \le \sum_{i=1}^\infty \lVert \varphi_i\rVert_{L^p(\R^n)}^p\,.
\end{equation}
By first combining inequalities  \eqref{e.sobo} and \eqref{e.lp}, 
and then using \eqref{e.ch}, we obtain
that
\begin{align*}
\lVert \varphi\rVert_{W^{s,p,\omega}(\R^n)}^p 
\le \sum_{i=1}^\infty \lVert \varphi_i\rVert_{W^{s,p,\omega}(\R^n)}^p
\le \sum_{i=1}^\infty \big(C_{s,p,\omega}(E_i)+ \varepsilon 2^{-i}\big)= 
\varepsilon + \sum_{i=1}^\infty  C_{s,p,\omega}(E_i)\,.
\end{align*}
Hence,  $C_{s,p,\omega}(\cup_{i=1}^\infty E_i)\le \varepsilon + \sum_{i=1}^\infty  C_{s,p,\omega}(E_i)$. 
Inequality \eqref{e.sub} follows by taking $\varepsilon\to 0$.
\end{proof}

\begin{proof}[Proof of Theorem \ref{t.Lebesgue}]
We follow the proof of \cite[Theorem 4.5]{Kin_Lat} very closely; the details
are provided for completeness and convenience of the reader.

Fix a function $f\in W^{s,p,\omega}(\R^n)$ and $i\in \N$. By the assumptions and Lemma \ref{l.approx}, we may choose $f_i\in C^\infty(\R^n)\cap W^{s,p,\omega}(\R^n)$ such that
\[
\lVert f-f_i\rVert_{W^{s,p,\omega}(\R^n)}^p \le 2^{-i(p+1)}\,.
\]
Denote $A_i = \{x\in\R^n\,:\, \widehat{M}(f-f_i)(x) > 2^{-i}\}$. 
Lemma \ref{l.maximal_testing} implies that
\begin{equation}\label{e.sml}
C_{s,p,\omega}(A_i)\le C 2^{ip} \lVert f-f_i\rVert_{W^{s,p,\omega}(\R^n)}^p
\le C2^{-i}\,,
\end{equation}
where $C=C(n,p,[\omega]_{A_p})$. Now (say) for every $x\in\R^n$ and $0<r\le 1$,
\begin{align*}
\lvert f_i(x) - f_{B(x,r)} \rvert \le \intav_{B(x,r)} \lvert f_i(x)-f_i(y)\rvert\,dy
+ \intav_{B(x,r)} \lvert f(y)-f_i(y)\rvert\,dy\,,
\end{align*}
which (by the continuity of $f_i$) implies that
\[
\limsup_{r\to 0} \lvert f_i(x)-f_{B(x,r)}\rvert \le\widehat{M}(f-f_i)(x)\le 2^{-i}\,,\qquad
x\in \R^n\setminus A_i\,.
\]

Let us fix $k\in\N$ and write $B_k=\cup_{i=k}^\infty A_i$. An application of  both subadditivity
of the Sobolev capacity, given by Lemma \ref{l.outer_measure}, 
and inequality \eqref{e.sml} yields
\begin{equation}\label{e.ze}
C_{s,p,\omega}(B_k)\le \sum_{i=k}^\infty C_{s,p,\omega}(A_i)
\le C\sum_{i=k}^\infty 2^{-i}\,.
\end{equation}
If $x\in \R^n\setminus B_k$ and $i,j\ge k$, then
\begin{align*}
\lvert f_i(x)-f_j(x)\rvert \le \limsup_{r\to 0} \lvert f_i(x)-f_{B(x,r)}\rvert
+\limsup_{r\to 0} \lvert f_j(x)-f_{B(x,r)}\rvert \le 2^{-i}+2^{-j}\,.
\end{align*}
It follows that $(f_i)_{i\in\N}$ converges uniformly in $\R^n\setminus B_k$ to a continuous 
function $g_k$ on $\R^n\setminus B_k$. Moreover, if $x\in \R^n\setminus B_k$
and $i\ge k$,  we have
\begin{align*}
\limsup_{r\to 0} \lvert g_k(x)-f_{B(x,r)}\rvert &\le \lvert g_k(x) - f_i(x)\rvert + \limsup_{r\to 0}
\lvert f_i(x)-f_{B(x,r)}\rvert\\&\le \lvert g_k(x) - f_i(x)\rvert + 2^{-i}\,.
\end{align*}
Hence, by taking $i\to \infty$, we obtain that
\[
g_k(x) = \lim_{r\to 0} \intav_{B(x,r)}f(y)\,dy =f^*(x)
\]
for every $x\in \R^n\setminus B_k$. 

Let us define $E=\cap_{k=1}^\infty B_k$. Then,
by monotonicity of the Sobolev capacity and \eqref{e.ze}, 
\[
C_{s,p,\omega}(E)\le \lim_{k\to \infty} C_{s,p,\omega}(B_k)=0
\]
and the limit
\[
\lim_{r\to 0}\intav_{B(x,r)} f(y)\,dy =f^*(x)
\]
does exist for every $x\in \R^n\setminus E$.
Finally, we fix $\varepsilon>0$ and choose $k$ large enough
so that $C_{s,p,\omega}(B_k)<\varepsilon$. By arguing as in 
the proof of Lemma \ref{l.maximal_testing}, we find 
that  $B_k=\cup_{i=k}^\infty A_i$ is a
union of open sets in $\R^n$, hence $B_k$ is open. 
Since $f^*=g_k$ in $\R^n\setminus B_k$,
we find that $f^*|_{\R^n\setminus B_k}$ is continuous on $\R^n\setminus B_k$.
Accordingly, we  can choose $U=B_k$.
\end{proof}

\section{Comparison of neighbourhood capacities}\label{s.neighbour}

As another application of our main result,  Theorem \ref{t.m.bounded}, we prove a capacitary comparison inequality that is formulated as Theorem \ref{t.cap_comp} below;  this inequality extends
the work \cite{L} of Lehrb\"ack.
To briefly explain our inequality, let us fix  a compact $\kappa$-porous set $E$ (see \S\ref{s.notation}) that is contained
in a bounded open set $G\subset \R^n$. 
We write \[E_{t,R}=\{x\in G\,:\, R(x)<t\}\,,\qquad t>0\,,\] 
where $R:G\to \R$ is a
continuous function such that $R= 0$ on $E$. Hence, 
the set $E_{t,R}$ is
an open neighborhood of $E$ in $G$.
We  focus on small values of $t>0$ and the underlying open set $G$
serves for the purpose of an `ambient space'. In particular, the structure of $E_{t,R}$ near to the boundary  $\partial G$
will be irrelevant
to us. 
Our `frame of reference' in comparison is the $t$-neighbourhood
\[E_t=E_{t,\mathrm{dist}(\cdot,E)}=\{x\in G\,:\, \mathrm{dist}(x,E)< t\}\,,\qquad t>0\,.\]
Namely, our capacitary comparison
inequality  is that
%
\[
\mathrm{cap}_{s,p,\omega,R} ( E, E_t\cap E_{4t/\kappa,R},G) \le C\, 
\mathrm{cap}_{s,p,\omega}(E,E_{t},G)
\]
for all small $t>0$ with a constant $C=\kappa^{-np}\,C(n,p,[\omega]_{A_p})>0$;
this is inequality \eqref{e.refss}.
Observe that an $R$-modified 
relative capacity is used in the left-hand side above, whereas an relative $(s,p,\omega)$-capacity is used in the right-hand side; these
are defined as follows.


\begin{definition}\label{d.new_cap}
Let $0<s<1$ and $1<p<\infty$, and let  $\omega$ be a  weight in $\R^n$.
Suppose that  $G\subset\R^n$ is an open set and 
$R:G\to \R$ is a measurable
function.
Let $E\subset \R^n$ be a compact set that is contained
in an open set $H\subset G$.  Then we write
\[
\mathrm{cap}_{s,p,\omega,R}(E,H,G)=\inf_{\varphi} \int_G \int_G \frac{\lvert \varphi(x)-\varphi(y)\rvert^p}{(\lvert x-y\rvert + \lvert R(x)-R(y)\rvert)^{sp}}\,\omega(x-y)\,dy\,dx\,,
\]
where the infimum is taken over all real-valued functions $\varphi\in C_0(G)$
 such that $\varphi(x)\ge 1$ for every $x\in E$ and $\varphi(x)=0$ for every
 $x\in G\setminus H$. If $R(x)=0$ for  every $x\in G$, then we abbreviate
$\mathrm{cap}_{s,p,\omega,R}(E,H,G)=\mathrm{cap}_{s,p,\omega}(E,H,G)$.
 \end{definition}


Let us still clarify the previous definition;

\begin{remark}
Observe that  $\mathrm{cap}_{s,p,\omega,R}(E,H,G)$
need not coincide with $\mathrm{cap}_{s,p,\omega,R}(E,H,H)$; cf.
\cite[p. 598]{Maz} and \cite{Fiscella}. This non-locality 
contributes to our 
heavy notation,
involving several parameters. However, the number of parameters reduces
when we look at special cases:
In the light
of Example \ref{s.usual_frac}, the `relative $(s,p)$-capacity'
\[
\mathrm{cap}_{s,p}(E,H,G)=\mathrm{cap}_{s+\varepsilon/p,p,\lvert\cdot\rvert^{\varepsilon-n}}(E,H,G)=\inf_{\varphi} \int_G \int_G \frac{\lvert \varphi(x)-\varphi(y)\rvert^p}{\lvert x-y\rvert^{n+sp}}\,dy\,dx
\]
is obtained as a special case of our general framework. This relative
$(s,p)$-capacity, in turn, generalizes the following `fractional $(s,p)$-capacity'
\[
\mathrm{cap}_{s,p}(E,G) = \mathrm{cap}_{s,p}(E,G,G)\,,\qquad E\subset G\text{ compact}\,.
\]
These fractional $(s,p)$-capacities have recently found applications, e.g., in connection with
the fractional Hardy inequalities, we refer to \cite{Dyda3,LV2}. 
\end{remark}

The following is our capacity comparison result.

\begin{theorem}\label{t.cap_comp}
Fix $0<s<1$, $1<p<\infty$, and  an $A_p$ weight  $\omega$ in $\R^n$.
Suppose that $E\not=\emptyset$ is a compact $\kappa$-porous set, contained in a bounded open set $G\subset \R^n$,
and  that $R:G\to \R$ is a continuous function
satisfying  both
$R(x)=0$ for every $x\in E$ and
$0\le R(x)\le \mathrm{dist}(x,\partial G)$ for every 
$x\in G$. 
Then, if $0<t< \kappa\, \mathrm{diam}(E)/4$
is such that $\overline{E_t}\subset G$, we have
\begin{equation}\label{e.refss}
\mathrm{cap}_{s,p,\omega,R} ( E, E_t\cap E_{4t/\kappa,R},G) \le C\, 
\mathrm{cap}_{s,p,\omega}(E,E_{t},G)\,,
\end{equation}
where $C=\kappa^{-np}\,C(n,p,[\omega]_{A_p})$.
\end{theorem}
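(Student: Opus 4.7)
The plan is to construct, from any admissible test function $\varphi$ for $\mathrm{cap}_{s,p,\omega}(E,E_t,G)$, an admissible test function $\psi$ for the left-hand side capacity whose modified seminorm is controlled by a constant multiple of the unmodified seminorm of $\varphi$; Theorem~\ref{t.m.bounded} provides the key boundedness ingredient. Since the bare maximal function $M_R(\varphi)$ need not be supported in $E_t\cap E_{4t/\kappa,R}$, a continuous cutoff will be required.

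Given $\varphi\in C_0(G)$ admissible for the right-hand side, I would introduce a continuous cutoff $\eta\colon G\to[0,1]$ with $\eta=1$ on an open neighbourhood of $E$ and $\eta=0$ outside $E_t\cap E_{4t/\kappa,R}$; a convenient choice is $\eta=\eta_1\cdot\eta_2$ where $\eta_1$ is a Lipschitz cutoff in $\mathrm{dist}(\cdot,E)$ vanishing at $t$ and $\eta_2$ is a Lipschitz cutoff in $R$ vanishing at $4t/\kappa$. Setting $\psi:=\eta\,M_R(\varphi)$, the function $\psi$ is continuous (from continuity of $R$ and compact support of $\varphi$), vanishes outside $E_t\cap E_{4t/\kappa,R}$ by $\eta$, and satisfies $\psi\ge 1$ on $E$ because $R=0$ there forces $M_R(\varphi)(x)=|\varphi(x)|\ge 1$ while $\eta(x)=1$. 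Hence $\psi$ is admissible for the left-hand side capacity. To estimate its modified seminorm I would use the Leibniz-type decomposition
\[
\psi(x)-\psi(y)=\eta(x)\bigl[M_R\varphi(x)-M_R\varphi(y)\bigr]+M_R\varphi(y)\bigl[\eta(x)-\eta(y)\bigr].
\]
The first summand, upon integration with the weight $\omega(x-y)/(|x-y|+|R(x)-R(y)|)^{sp}$ and using $|\eta|\le 1$, is controlled by Theorem~\ref{t.m.bounded} by $C(n,p,[\omega]_{A_p})$ times the unmodified seminorm of $\varphi$, and hence after taking the infimum by $C(n,p,[\omega]_{A_p})\,\mathrm{cap}_{s,p,\omega}(E,E_t,G)$.

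The main obstacle is the error term
\[
\int_G\int_G \frac{M_R\varphi(y)^p\,|\eta(x)-\eta(y)|^p}{(|x-y|+|R(x)-R(y)|)^{sp}}\,\omega(x-y)\,dy\,dx,
\]
from which the $\kappa^{-np}$ factor must emerge. Here the $\kappa$-porosity of $E$ together with the constraint $t<\kappa\,\mathrm{diam}(E)/4$ enters decisively: porosity applied at a nearest point of $E$ produces, for each relevant $y$, an empty sub-ball of $B(y,R(y))$ of radius $\simeq\kappa R(y)$, yielding a density estimate $|B(y,r)\cap E_t|\le(1-c\kappa^n)|B(y,r)|$ for $r\simeq R(y)$, whose $p$-th power after inversion contributes $\kappa^{-np}$. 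Combined with the elementary interpolation $|\eta(x)-\eta(y)|^p\le C(\kappa/t)^{sp}(|x-y|+|R(x)-R(y)|)^{sp}$ (from $|\eta|\le 1$ together with the $\kappa/t$-Lipschitz bound in $R$), the $L^p$-boundedness of the Hardy--Littlewood maximal operator, and a weighted fractional Poincar\'e-type inequality for $\varphi\in C_0(G)$ supported in the bounded open set $G$, the error term reduces to $\kappa^{-np}\,C(n,p,[\omega]_{A_p})\,\mathrm{cap}_{s,p,\omega}(E,E_t,G)$. Summing with the main term and taking the infimum over admissible $\varphi$ yields the claim.
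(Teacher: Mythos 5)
Your proposal takes a genuinely different route from the paper, and unfortunately that route has gaps that I do not see how to close.

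The paper avoids cutoff functions entirely by flipping the test function: starting from an admissible $\varphi$ for $\mathrm{cap}_{s,p,\omega}(E,E_t,G)$, it sets $f=1-\min\{1,\varphi\}$, so that $f=0$ on $E$ and $f=1$ outside $E_t$. Porosity is then used to obtain a \emph{lower} bound $M_R(f)(x)\ge 4^{-n}\kappa^n$ for $x\in G\setminus E_{4t/\kappa,R}$: since $R(x)\ge 4t/\kappa$, the ball $B(x,4t/\kappa)$ contains (by $\kappa$-porosity of $E$ and the restriction $t<\kappa\,\mathrm{diam}(E)/4$) a sub-ball $B(y,t)$ on which $f\equiv 1$, and the volume ratio is $(\kappa/4)^n$. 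Consequently $g=1-\min\{1,4^{n+1}\kappa^{-n}M_R(f)\}$ is automatically zero on $G\setminus(E_t\cap E_{4t/\kappa,R})$ and equals $1$ on $E$, with no cutoff needed. Since $t\mapsto 1-\min\{1,ct\}$ is $c$-Lipschitz, the modified seminorm of $g$ is at most $(4^{n+1}\kappa^{-n})^p$ times the modified seminorm of $M_R(f)$, which Theorem~\ref{t.m.bounded} bounds by $C(n,p,[\omega]_{A_p})$ times the unmodified seminorm of $f$ (hence of $\varphi$). The factor $\kappa^{-np}$ thus enters through the multiplicative constant $4^{n+1}\kappa^{-n}$, not through an error term.

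In your proposal the main term is handled correctly, but the Leibniz error term
\[
\int_G\int_G \frac{M_R\varphi(y)^p\,|\eta(x)-\eta(y)|^p}{(|x-y|+|R(x)-R(y)|)^{sp}}\,\omega(x-y)\,dy\,dx
\]
creates two problems. First, after applying the interpolation $|\eta(x)-\eta(y)|^p\lesssim t^{-sp}(|x-y|+|R(x)-R(y)|)^{sp}$ the remaining quantity is essentially an $L^p$ norm of $M_R\varphi$ against a translation of $\omega$. To convert this into the seminorm $|\varphi|_{W^{s,p,\omega}(G)}^p$ (and thence into $\mathrm{cap}_{s,p,\omega}(E,E_t,G)$) you invoke a weighted fractional Poincar\'e inequality, but such an inequality with a constant depending only on $n,p,[\omega]_{A_p},\kappa$ --- in particular, independent of $t$ and of $G$, as the statement of the theorem requires --- is not available, and no additional hypotheses on $\omega$ (such as \eqref{e.subset}) are assumed here. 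Second, the mechanism you describe for extracting $\kappa^{-np}$ is not coherent: a density estimate of the form $|B(y,r)\cap E_t|\le(1-c\kappa^n)|B(y,r)|$, even if established, yields upon inversion only the bounded factor $(1-c\kappa^n)^{-p}$, not $\kappa^{-np}$. In the paper the $\kappa$-porosity supplies a \emph{lower} bound on $M_R(f)$, and $\kappa^{-n}$ appears because that lower bound must be inverted to renormalize $M_R(f)$ to $\ge 1$; this is what the complementary test function $f=1-\min\{1,\varphi\}$ is designed to accomplish. I suggest re-examining the paper's choice of test function: applying $M_R$ to the \emph{complement} $1-\min\{1,\varphi\}$ rather than to $\varphi$ itself is the decisive idea that eliminates both the need for a cutoff and the need for a Poincar\'e inequality.
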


Before proving this result, let us illustrate the special case of relative $(s,p)$-capacity 
while working in the setting of Theorem \ref{t.cap_comp} together with
a fixed $0<\alpha<1$.
Since $E_{t^{1/\alpha}}\subset E_t$ for all small $t>0$, we have 
\begin{equation}\label{e.dir}
\mathrm{cap}_{s,p}(E,E_t,G)\le \mathrm{cap}_{s,p}(E,E_{t^{1/\alpha}},G)\,.
\end{equation}
Because the set $E_{t^{1/\alpha}}$ can be much smaller
than $E_t$, it is reasonable to expect---unless
both of the above capacities vanish---that the converse of inequality \eqref{e.dir}
with a
$t$-independent constant
cannot hold
for all small $t>0$.
For a more precise statement, we need the following non-trivial example.

\begin{example}\label{e.conv_fail}
Let $E\subset G$ be a compact Ahlfors $\lambda$-regular set \cite{L}
with $0<\lambda <n$; then $E$ is $\kappa$-porous for some $0<\kappa<1$.
Assume that $0<s<1$ and $1< p < \infty$ satisfy  $n-sp<\lambda < n$. 
Then there exists $t_0>0$ such that, whenever $0<t<t_0$, we have
\begin{equation}\label{e.ahlfors}
\mathrm{cap}_{s,p}(E,E_t,G)\simeq t^{n-\lambda -sp}
\end{equation}
and the constants of comparison are independent of $t$.
Indeed, this comparison estimate can be obtained by adapting 
the arguments that are given in \cite{L}; we omit the details here.
\end{example}

Let us continue our discussion (before the example) and suppose that  
$\alpha s<\sigma<1$. If we take $n-\alpha sp<\lambda<n$ to be sufficiently large, then Example \ref{e.conv_fail} shows that inequality
\begin{equation}\label{e.cap_sharp}
\mathrm{cap}_{\sigma,p}(E,E_{c\,t^{1/\alpha}},G) \le C\, \mathrm{cap}_{s,p}(E,E_t,G)\,,\qquad t>0\text{ small}\,,
\end{equation}
fails for some compact $\kappa$-porous set $E\subset G$ if
$c$ and $C$ are  not allowed to depend
on the parameter $t>0$ (but are allowed to depend on $E$ and the other parameters).

On the other hand, if we assume that $\sigma \le \alpha s$,
then inequality \eqref{e.cap_sharp} holds for any fixed compact Ahlfors $\lambda$-regular
set $E\subset G$ given that $n-\sigma p<\lambda <n$; see Example \ref{e.conv_fail}. 
If $\sigma < \alpha s$ (we now exclude the `critical' case  $\sigma=\alpha s$), then the
last conclusion can be independently obtained with our results:   
by straightforward estimates and Theorem \ref{t.cap_comp} we find that, for small $t>0$,
\begin{align*}
\mathrm{cap}_{\sigma,p}(E,E_{c\,t^{1/\alpha}},G)&\lesssim \mathrm{cap}_{s+\varepsilon/p,p,\omega,R}(E,E_t\cap E_{4t/\kappa,R},G)
\\&\lesssim \mathrm{cap}_{s+\varepsilon/p,p,\omega}(E,E_t,G) = \mathrm{cap}_{s,p}(E,E_t,G)\,.
\end{align*}
Here $c=(4/\kappa)^{1/\alpha}$, 
$\omega = \lvert \cdot\rvert^{\varepsilon-n}$ (for a sufficiently small $\varepsilon>0$)
is an $A_p$ weight,
and 
\[R(x)=\min\{ \mathrm{dist}(x,E)^\alpha,\mathrm{dist}(x,\partial G)\}\,,\qquad x\in G\,,
\] defines an $\alpha$-H\"older function on the bounded open set $G$.
We also have  $R(x)=\mathrm{dist}(x,E)^\alpha$ if $x$ is 
sufficiently close to a fixed $\kappa$-porous compact set $E\subset G$. In particular,
this set is allowed to be a compact Ahlfors $\lambda$-regular set with $0<\lambda<n$.
We turn our focus to the proof of  Theorem \ref{t.cap_comp}. To this end, we first
consider the following 
modification of \cite[Lemma 2.3]{Dyda3}.

\begin{lemma}\label{l.continuity}
Suppose that $R:G\to \R$ is a
continuous function on an  open set $\emptyset\not=G\subsetneq\R^n$ such that $0\le R(x)\le \mathrm{dist}(x,\partial G)$ for every
$x\in G$. Assume that $f:G\to \R$ has a continuous
extension to $\R^n$. 
Then $M_R(f)=M_R(f\chi_G)$ is a continuous function on $G$.
\end{lemma}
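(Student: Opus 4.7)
The plan is to exploit the constraint $0\le r\le R(x)\le\dist(x,\partial G)$: every admissible ball $B(x,r)$ lies entirely inside $G$, so $f\chi_G$ agrees on it with the continuous extension $\tilde f:\R^n\to\R$. This both yields the identity $M_R(f)=M_R(f\chi_G)$ and reduces the problem to continuity of
\[
x\mapsto\sup_{0\le r\le R(x)}A(x,r)\,,\qquad A(x,r)=\intav_{B(x,r)}\lvert\tilde f(y)\rvert\,dy\quad(r>0)\,,
\]
with the convention $A(x,0)=\lvert\tilde f(x)\rvert$.

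The first step is to check that $A$ itself is continuous on $\R^n\times[0,\infty)$. For $r>0$ this is immediate from dominated convergence and the local boundedness of $\tilde f$; at $(x_0,0)$ it follows from the fact that if $(x_n,r_n)\to(x_0,0)$, then the balls $B(x_n,r_n)$ eventually lie in any fixed neighbourhood of $x_0$, and continuity of $\lvert\tilde f\rvert$ at $x_0$ then yields $A(x_n,r_n)\to\lvert\tilde f(x_0)\rvert=A(x_0,0)$. In particular, the supremum defining $M_R(f)(x)$ is actually a maximum, since $A(x,\cdot)$ is continuous on the compact interval $[0,R(x)]$.

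Granted continuity of $A$ and of $R$, continuity of $M_R(f)$ at a point $x_0\in G$ becomes a standard `maximum over a varying compact interval' argument. For upper semicontinuity along $x_n\to x_0$, I would extract maximizers $r_n\in[0,R(x_n)]$, use boundedness of $(r_n)$ (a consequence of $R(x_n)\to R(x_0)$) to pass to a subsequential limit $r_n\to r_*\le R(x_0)$, and conclude
\[
\limsup_{n\to\infty}M_R(f)(x_n)=A(x_0,r_*)\le M_R(f)(x_0)\,.
\]
For lower semicontinuity, fix $\varepsilon>0$ and pick $r_0\in[0,R(x_0)]$ with $A(x_0,r_0)\ge M_R(f)(x_0)-\varepsilon$; when $R(x_0)>0$ and $r_0=R(x_0)$, perturb $r_0$ slightly downward, using continuity of $A(x_0,\cdot)$, to obtain $r_0'\in[0,R(x_0))$ still close to $r_0$. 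Continuity of $R$ then gives $r_0'\le R(x_n)$ for all large $n$, hence $\liminf_n M_R(f)(x_n)\ge\lim_n A(x_n,r_0')=A(x_0,r_0')$.

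The one point requiring separate attention is the degenerate case $R(x_0)=0$, in which the half-open interval $[0,R(x_0))$ used in the perturbation step is empty. Here however $M_R(f)(x_0)=\lvert f(x_0)\rvert$, and the trivial bound $M_R(f)(x_n)\ge A(x_n,0)=\lvert f(x_n)\rvert\to\lvert f(x_0)\rvert$ already delivers lower semicontinuity. This interplay between the boundary case $R(x_0)=0$ and the perturbation in the main case is the only delicate bookkeeping point; the rest is routine.
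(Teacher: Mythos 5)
Your proof is correct, and it rests on the same central observation as the paper's: the averaging function $A(x,r)$ (which the paper calls $F$) is jointly continuous on $\R^n\times[0,\infty)$, so the issue reduces to continuity of a supremum over a moving compact interval $[0,R(x)]$. Where you diverge is in how you handle the moving endpoint. You run a sequential argument, proving upper semicontinuity by extracting a convergent subsequence of maximizers $r_n\to r_*\le R(x_0)$, and lower semicontinuity by perturbing a near-maximizer $r_0$ slightly downward so that it eventually fits below $R(x_n)$ -- which forces you to treat $R(x_0)=0$ as a separate edge case. The paper instead fixes $x$ and works with an explicit $\varepsilon$-$\delta$ argument: it invokes uniform continuity of $F$ on a compact neighbourhood $\overline{B}(x,1)\times[0,R(x)+1]$ and then compares $F(y,r(y))$ with $F(x,\,r(y)\wedge R(x))$ (and symmetrically $F(x,r(x))$ with $F(y,\,r(x)\wedge R(y))$). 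The truncation $r(y)\wedge R(x)$ lands automatically inside the admissible interval for $x$, and the gap $\lvert r(y)-r(y)\wedge R(x)\rvert$ is controlled by $\lvert R(x)-R(y)\rvert$, so both inequalities $M_R(f)(y)\le M_R(f)(x)+\varepsilon$ and $M_R(f)(x)\le M_R(f)(y)+\varepsilon$ fall out symmetrically with no degenerate case to treat. Your argument is entirely valid and perhaps more conceptual (semicontinuity from two sides); the paper's is more economical, as the single truncation step subsumes your perturbation and your $R(x_0)=0$ bookkeeping at once.
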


\begin{proof}
We first observe that the function
defined by $F(x,0)=\lvert f(x)\rvert$ and \[F(x,r)=\intav_{B(x,r)} \lvert f(y)\rvert\,dy\]
 for $r>0$
 is continuous on $\R^n\times [0,\infty)$
(in this definition the function $\lvert f\rvert$ is continuously extended to the
whole $\R^n$, which is possible due to assumptions). 

Let us fix $x\in G$ and $\varepsilon>0$.
By the uniform continuity of $F$
on $\overline{B}(x,1) \times [0,R(x)+1]$,
there exists $0<\eta<1$ such that
$\lvert F(y,s)- F(x,t)\rvert < \varepsilon$
whenever $\lvert y-x\rvert+\lvert s-t\rvert<\eta$ and $0\leq s,t \leq R(x)+1$.
Moreover, by continuity of $R$ at $x$, there exists $0<\delta<\eta/2\wedge \dist(x,\partial G)$ such that
\[
\lvert R(x)-R(y)\rvert < \frac{\eta}{2}
\]
whenever $\lvert x-y\rvert < \delta$. 
To prove the continuity of $M_R(f)$ at the point $x$, let us consider a point $y\in G$ such that $\lvert x-y\rvert < \delta$. 
Now, for some $0\le r(y)\le R(y)$, we have
\[
M_R (f)(y) = F(y,r(y)) \leq
   F(x,r(y) \wedge R(x)) + \varepsilon \leq M_R(f)(x)+\varepsilon\,,
\]
because $\lvert y-x\rvert + \lvert r(y) - r(y) \wedge R(x)\rvert < \eta$
and $0\le r(y)\le R(y)\le R(x)+1$.
On the other hand, for some $0\le r(x)\le R(x)$,
\[
 M_R(f)(x) = F(x,r(x))  \leq
 F(y,r(x) \wedge R(y)) + \varepsilon
\leq  M_R(f)(y)+ \varepsilon\,.
\]
This proves continuity of $M_R(f)$.
\end{proof}

\begin{proof}[Proof of Theorem \ref{t.cap_comp}]
Fix $t>0$ as in the statement of the theorem.
Let $\varphi\in C_0(G)$ 
be such that $\varphi\ge 1$ on $E$ and $\varphi=0$ on $G\setminus E_t$. We define
\[
f=1-\min\{1,\varphi\} \in C(G)\,.
\]
Then 
$f(x)=0$ for every $x\in E$ and 
$f(x)=1$ if $x\in G\setminus E_t=\{x\in G\,:\, \mathrm{dist}(x,E)\ge t\}$.
Let us consider a point $x\in G$ which satisfies 
$R(x)\ge 4t/\kappa$. By using the $\kappa$-porosity of $E$, it is rather straightforward
to find $y\in B(x, 4t/\kappa)$ such
that $B(y, t)\subset B(x,4t/\kappa)\subset G$ and
$\mathrm{dist}\big(  B(y, t), E\big) \ge t$. Hence, 
\begin{align*}
M_R (f)(x) &\ge \intav_{B(x,4t/\kappa)} \lvert f(z)\rvert\,dz
\ge \frac{\lvert B(y,t)\rvert}{\lvert B(x,4t/\kappa)\rvert} \intav_{B(y,t)} \lvert f(z)\rvert\,dz
\ge \frac{\lvert B(y,t)\rvert}{\lvert B(x,4t/\kappa)\rvert} =4^{-n}\kappa^{n}\,.
\end{align*}
It follows that $4^n\kappa^{-n}M_R(f)\ge 1$ on the set $G\setminus
E_{4t/\kappa,R}$. 
We also have $M_R(f)\ge \lvert f\rvert= 1$ on the set $G\setminus  E_{t}$.
Moreover, if $x\in E$, then $R(x)=0$ and hence
\[M_R(f)(x)=\lvert f(x)\rvert=0\,.\]
Since $f:G\to\R$ can be continuously extended to the whole $\R^n$ by setting $f(x)=1$
for every $x\in\R^n\setminus G$, 
by  Lemma \ref{l.continuity} we find that  $M_R(f)$ is 
continuous on $G$.

By using the previous facts, we find that the function
$g=1-\min\{1,4^{n+1}\kappa^{-n}M_R(f)\}$ is an admissible
test function for the capacity  
in the left-hand side of inequality \eqref{e.refss}; in particular,
the support condition follows from the chain of inclusions \[
\mathrm{supp}(g)
\subset \{x\in G\,:\, 4^{n+1} \kappa^{-n} M_R(f)(x)\le 1\} 
\subset E_t\cap E_{4t/\kappa,R}\subset \overline{E_t}\subset G\,.\]
Hence, we obtain that
\begin{align*}
&\mathrm{cap}_{s,p,\omega,R} ( E, E_t\cap E_{4t/\kappa,R},G)
\le \int_G \int_G \frac{\lvert g(x)-g(y)\rvert^p}{(\lvert x-y\rvert + \lvert R(x)-R(y)\rvert)^{sp}}\,\omega(x-y)\,dy\,dx\\
&\le (4^{n+1}\kappa^{-n})^p \int_G \int_G \frac{\lvert M_R(f)(x)-M_R(f)(y)\rvert^p}{(\lvert x-y\rvert + \lvert R(x)-R(y)\rvert)^{sp}}\,\omega(x-y)\,dy\,dx\,.
\end{align*}
Observe that $f\in L^\infty(G)\subset L^p(G)$
since $G$ is assumed to be bounded. By Theorem \ref{t.m.bounded}, 
we then obtain that
\begin{align*}
\mathrm{cap}_{s,p,\omega,R} ( E, E_t\cap E_{4t/\kappa,R},G)&\le \kappa^{-np} 
C({n,p,[\omega]_{A_p}}) \int_G\int_G\frac{\lvert f(x)-f(y)\rvert^p}{\lvert x-y\rvert^{sp}}\,\omega(x-y)\,dy\,dx\\
&\le \kappa^{-np}C(n,p,[\omega]_{A_p}) \int_G\int_G\frac{\lvert \varphi(x)-\varphi(y)\rvert^p}{\lvert x-y\rvert^{sp}}\,\omega(x-y)\,dy\,dx\,.
\end{align*}
The required inequality \eqref{e.refss} follows by taking infimum over all of the functions $\varphi$ that are considered above.
\end{proof}


\begin{thebibliography}{10}

\bibitem{MR2839008}
D.~ R.~ Adams.
\newblock Besov capacity redux.
\newblock {\em J. Math. Sci. (N. Y.)}, 162(3):307--318, 2009.


\bibitem{MR1411441}
D.~ R.~ Adams and L.~I.~and Hedberg.
\newblock {\em Function spaces and potential theory},
\newblock volume 314 of {\em Grundlehren der Mathematischen Wissenschaften [Fundamental
              Principles of Mathematical Sciences]}. Springer-Verlag, Berlin, 1996.
                            

\bibitem{MR1624754}
V.~I.~Burenkov and W.~D.~Evans.
\newblock Weighted {H}ardy-type inequalities for differences and the
              extension problem for spaces with generalized smoothness.
\newblock {\em J. London Math. Soc. (2)}, 57(1):209--230, 1998.

\bibitem{Dipierro}
S.~Dipierro and E. Valdinoci.
\newblock A density property for fractional weighted Sobolev spaces.
\newblock Preprint: arXiv:1501.04918, 2015.


\bibitem{Dyda_comparability}
B.~Dyda.
\newblock On comparability of integral forms.
\newblock {\em J. Math. Anal. Appl.}, 318(2):564--577, 2006.


\bibitem{Dyda2}
B.~Dyda and A.~V. V{\"a}h{\"a}kangas.
\newblock A framework for fractional {H}ardy inequalities.
\newblock {\em Ann. Acad. Sci. Fenn. Math.}, 39(2):675--689, 2014. 

\bibitem{Dyda3}
B.~Dyda and A.~V. V{\"a}h{\"a}kangas,
Characterizations for fractional {H}ardy inequality,
Adv. Calc. Var., 
2014, 
(DOI) 10.1515/acv-2013-0019.



\bibitem{MR807149}
J.~Garc{\'{\i}}a-Cuerva and J.~L. Rubio~de Francia.
\newblock {\em Weighted norm inequalities and related topics}, volume 116 of
  {\em North-Holland Mathematics Studies}.
\newblock North-Holland Publishing Co., Amsterdam, 1985.
\newblock Notas de Matem{\'a}tica [Mathematical Notes], 104.


%



\bibitem{Fiscella}
A.~Fiscella, R. Servadei and E. Valdinoci.
\newblock Density properties for fractional Sobolev spaces.
\newblock {\em Ann. Acad. Sci. Fenn. Math.}, to appear.

\bibitem{MR1953497}
M.~Fukushima and T.~Uemura.
\newblock On {S}obolev and capacitary inequalities for contractive
              {B}esov spaces over {$d$}-sets.
\newblock {\em Potential Anal.}, 18(1):59--77, 2003.


\bibitem{Tauberian}
P.~A.~Hagelstein, T.~Luque and I.~Parissis.
\newblock Tauberian conditions, Muckenhoupt weights, and differentiation properties of weighted bases.
\newblock Preprint:  arXiv: 1304.1015, 2013.


\bibitem{MR2041705}
P.~Haj{\l}asz and J.~Onninen.
\newblock On boundedness of maximal functions in {S}obolev spaces.
\newblock {\em Ann. Acad. Sci. Fenn. Math.}, 29(1):167--176, 2004.

\bibitem{Har_Kin}
P.~Harjulehto and J.~Kinnunen.
\newblock Differentiation bases for Sobolev functions on metric spaces.
\newblock {\em Publ. Mat.}, 48(2):381--395, 2004.

\bibitem{Hei_Tuo}
T.~Heikkinen and H.~Tuominen.
\newblock Smoothing properties of the discrete fractional maximal operator on Besov and Triebel--Lizorkin spaces.
\newblock {\em Publ. Mat.}, 58(2):379--399, 2014.

\bibitem{Hei_Kin_Tuo}
T.~Heikkinen and J.~Kinnunen and J.~Korvenp\"a\"a and  H.~Tuominen.
\newblock Smoothing properties of the discrete fractional maximal operator on Besov and Triebel--Lizorkin spaces.
\newblock {\em Ark. Mat.}, to appear.


\bibitem{MR1118940}
T.~Horiuchi.
\newblock The imbedding theorems for weighted {S}obolev spaces. {II}.
\newblock {\em Bull. Fac. Sci. Ibaraki Univ. Ser. A}, 23:11--37, 1991.

\bibitem{HV}
R.~Hurri-Syrj\"anen and A.~V.~V\"ah\"akangas. 
\newblock Characterizations to the fractional Sobolev inequality. 
\newblock Preprint:  arXiv: 1312.3135, 2013.

\bibitem{HV2}
R.~Hurri-Syrj\"anen and A.~V.~V\"ah\"akangas. 
\newblock Fractional Sobolev-Poincare and fractional Hardy inequalities in unbounded John domains. 
\newblock {\em Mathematika}, accepted for publication, 2014.


\bibitem{ihnatsyeva3}
L.~Ihnatsyeva, J.~Lehrb\"ack, H.~Tuominen, and A.~V. V{\"a}h{\"a}kangas.
\newblock {F}ractional {H}ardy inequalities and visibility of the boundary.
\newblock{\em Studia Math.}, 224(1):47--80, 2014.

\bibitem{ihnatsyeva4}
L.~Ihnatsyeva, J.~Lehrb\"ack, H.~Tuominen, and A.~V. V{\"a}h{\"a}kangas.
\newblock Distance weights in metric spaces.
\newblock In preparation.





\bibitem{MR833361}
B.~Jawerth.
\newblock Weighted inequalities for maximal operators: linearization, localization and factorization.
\newblock {\em Amer. J. Math.}, 108(2):361--414, 1986.

\bibitem{Kilpelainen}
T.~Kilpel{\"a}inen.
\newblock Weighted {S}obolev spaces and capacity.
\newblock {\em Ann. Acad. Sci. Fenn. Ser. A I Math.}, 19(1):95--113, 1994.

\bibitem{MR1469106}
J.~Kinnunen.
\newblock The {H}ardy-{L}ittlewood maximal function of a {S}obolev function.
\newblock {\em Israel J. Math.}, 100:117--124, 1997.


\bibitem{Kin_Lat}
J.~Kinnunen and V.~Latvala.
\newblock Lebesgue points for Sobolev functions on metric spaces.
\newblock {\em Rev. Mat. Iberoamericana}, 18:685--700, 2002.

\bibitem{MR1650343}
J.~Kinnunen and P.~Lindqvist.
\newblock The derivative of the maximal function.
\newblock {\em J. Reine Angew. Math.}, 503:161--167, 1998.

\bibitem{Kin_Mar}
J.~Kinnunen and O.~Martio.
\newblock The {S}obolev capacity on metric spaces.
\newblock {\em Ann. Acad. Sci. Fenn. Math.}, 21:367--382, 1996.

\bibitem{MR1979008}
J.~Kinnunen and E.~Saksman.
\newblock Regularity of the fractional maximal function.
\newblock {\em Bull. London Math. Soc.}, 35(4):529--535, 2003.

\bibitem{MR1951818}
S.~Korry.
\newblock Boundedness of {H}ardy-{L}ittlewood maximal operator in the framework
 of {L}izorkin--{T}riebel spaces.
\newblock {\em Rev. Mat. Complut.}, 15(2):401--416, 2002.


\bibitem{KLV}
A.~K{\"a}enm{\"a}ki, J.~Lehrb{\"a}ck and M.~Vuorinen.
\newblock Dimensions, {W}hitney covers, and tubular neighborhoods.
\newblock {\em Indiana Univ. Math. J.}, 62(6):1861--1889, 2013.

%

\bibitem{L}
J.~Lehrb{\"a}ck.
\newblock Neighbourhood capacities.
\newblock {\em Ann. Acad. Sci. Fenn. Math.}, 37(1):35--51, 2012.

\bibitem{LT}
J.~Lehrb{\"a}ck and H.~Tuominen.
\newblock A note on the dimensions of {A}ssouad and {A}ikawa.
\newblock {\em J. Math. Soc. Japan}, 65(2):343--356, 2013.

\bibitem{LV}
J.~Lehrb{\"a}ck and A.~V.~V\"ah\"akangas.
\newblock In between the  inequalities of Sobolev and Hardy.
\newblock Preprint:  arXiv:1502.01190, 2015.

\bibitem{LL}
L.~Liu and T.~Luque.
\newblock A $B_p$ condition for the strong maximal function.
\newblock {\em Trans. Amer. Math. Soc.}, 366:5707--5726, 2014.


\bibitem{MR2659764}
M.~Loss and C.~Sloane.
\newblock Hardy inequalities for fractional integrals on general
              domains.
\newblock {\em J. Funct. Anal.}, 259(6):1369--1379, 2010.

\bibitem{MR2280193}
H.~Luiro.
\newblock Continuity of the maximal operator in {S}obolev spaces.
\newblock {\em Proc. Amer. Math. Soc.}, 135(1):243--251 (electronic), 2007.

\bibitem{MR2579688}
H.~Luiro.
\newblock On the regularity of the {H}ardy-{L}ittlewood maximal operator on
 subdomains of {$\R^n$}.
\newblock {\em Proc. Edinb. Math. Soc. (2)}, 53(1):211--237, 2010.

%

\bibitem{LV2}
H.~Luiro and A.~V.~V\"ah\"akangas.
\newblock Local maximal operators on fractional Sobolev spaces.
\newblock {\em J. Math. Soc. Japan}, accepted for publication, 2015.


\bibitem{MR1608518}
J.~Luukkainen.
\newblock Assouad dimension: antifractal metrization, porous sets, and
              homogeneous measures.
\newblock {\em J. Korean Math. Soc.}, 35(1):23--76, 1998.


\bibitem{Maz}
V. Maz'ya,
Sobolev Spaces with Applications to Elliptic Partial Differential Equations.
2nd, revised and augmented Edition.
A Series of Comprehensive Studies in Mathematics,
\newblock{342},
Springer Heidelberg Dordrecht London New York, 2011.

\bibitem{MR1006450}
Y.~V. Netrusov.
\newblock Sets of singularities of functions in spaces of {B}esov and
  {L}izorkin-{T}riebel type.
\newblock {\em Trudy Mat. Inst. Steklov.}, 187:162--177, 1989.
\newblock Translated in Proc. Steklov Inst. Math. {{\bf{1}}990}, no. 3,
  185--203, Studies in the theory of differentiable functions of several
  variables and its applications, 13 (Russian).


\bibitem{MR2944369}
E.~Di~Nezza, G.~Palatucci, and E.~Valdinoci.
\newblock Hitchhiker's guide to the fractional {S}obolev spaces.
\newblock {\em Bull. Sci. Math.}, 136(5):521--573, 2012.




\bibitem{MR924157}
W.~Rudin.
\newblock {\em Real and complex analysis}.
\newblock McGraw-Hill Book Co., New York, third edition, 1987.


%


\bibitem{MR869816}
A.~Torchinsky.
\newblock {\em Real-variable methods in harmonic analysis}, volume 123 of {\em
Pure and Applied Mathematics}.  Academic Press Inc.,
{Orlando, FL},
 {1986}.
 
 \bibitem{MR1163193}
H.~Triebel.
\newblock {\em Theory of function spaces. {II}}, volume~84 of {\em Monographs
  in Mathematics}.
\newblock Birkh\"auser Verlag, Basel, 1992.
 
 \bibitem{MR1774162}
B.~O. Turesson.
\newblock {\em Nonlinear potential theory and weighted {S}obolev spaces},
  volume 1736 of {\em Lecture Notes in Mathematics}.
\newblock Springer-Verlag, Berlin, 2000.
 
 \bibitem{Warma}
M.~Warma.
\newblock The {F}ractional {R}elative {C}apacity and the {F}ractional
              {L}aplacian with {N}eumann and {R}obin {B}oundary {C}onditions
              on {O}pen {S}ets.
\newblock {\em Potential Anal.}, 42(2):499--547, 2015.

 
 

\end{thebibliography}
\end{document}